\documentclass[11pt, a4paper]{article}

\usepackage{fullpage}
\usepackage[margin=2.5cm]{geometry}
\usepackage{setspace}
\usepackage[section]{placeins}
\usepackage{authblk}

\usepackage{xcolor}           
\usepackage[colorlinks=true, allcolors= blue]{hyperref}
\hypersetup{
    bookmarksdepth=3 %
}

\usepackage{todonotes}
\usepackage[draft, deletedmarkup=sout]{changes} 
\definechangesauthor[name=Rai, color=green]{R}
\definechangesauthor[name=David, color=blue]{D}
\definechangesauthor[name=Krish, color=purple]{K}
\definechangesauthor[name=Ali, color=red]{A}

\usepackage{amsmath,amssymb,amsthm,mathtools,mathrsfs}
\usepackage{bbold}             
\usepackage{stackengine}       
\usepackage{nicefrac}   
\usepackage{cleveref}

\newtheorem{Theorem}{Theorem}[section]

\newtheorem{Lemma}[Theorem]{Lemma}

\newtheorem{Proposition}[Theorem]{Proposition}
\newtheorem{Definition}[Theorem]{Definition}
\newtheorem{remark}[Theorem]{Remark}

\renewcommand{\H}{\mathcal{H}}

\newcommand{\C}{\mathcal{C}}
\newcommand{\D}{\mathcal{D}}
\newcommand{\I}{\mathcal{I}}

\newcommand{\E}{\mathcal{E}}
\newcommand{\F}{\mathcal{F}}
\newcommand{\G}{\mathcal{G}}
\newcommand{\M}{\mathcal{M}}
\newcommand{\K}{\mathcal{K}}
\newcommand{\Q}{\mathcal{Q}}
\newcommand{\beliefsupport}{2^\K_\emptyset}
\renewcommand{\P}{\mathcal{P}}

\newcommand{\R}{\mathcal{R}}
\renewcommand{\S}{\mathcal{S}}

\newcommand{\U}{\mathcal{U}}

\newcommand{\W}{\mathcal{W}}
\newcommand{\X}{\mathcal{X}}

\newcommand{\NN}{\mathbb{N}}
\newcommand{\PP}{\mathbb{P}}
\newcommand{\EE}{\mathbb{E}}

\newcommand{\RR}{\mathbb{R}}

\newcommand{\supp}{\textnormal{supp}}   

\newcommand{\Post}{\textnormal{Post}}
\newcommand{\until}{\,..\,}
\newcommand{\eps}{\varepsilon}
\newcommand{\given}{\,|\,}
\newcommand{\givenm}{\,\middle|\,}

\DeclareMathOperator*{\argmin}{arg\,min}
\newcommand{\defas}{\coloneqq}
\newcommand{\play}{\rho}
\newcommand{\Plays}{\Omega}

\newcommand{\another}[1]{\widetilde{#1}}
\newcommand{\POMDP}{P}
\newcommand{\MDP}{M}
\newcommand{\com}{\mathsf{com}}
\newcommand{\rev}{\mathsf{rev}}
\newcommand{\syn}{\mathsf{syn}}
\newcommand{\safe}{\mathsf{safe}}
\newcommand{\rep}{\mathsf{rep}}
\newcommand{\reset}{\mathsf{reset}}
\newcommand{\unsafe}{\mathsf{unsafe}}
\newcommand{\erase}{\mathtt{?}}

\allowdisplaybreaks

\title{The Complexity of Approximating the Value in\\ Revealing POMDPs with Long-Run Average Objectives}

\author[1]{Ali Asadi}
\author[1]{Krishnendu Chatterjee}
\author[1]{David Lurie}

\affil[1]{Institute of Science and Technology Austria}

\date{\today}

\begin{document}
\maketitle

\begin{abstract}
    We study partially observable Markov decision processes (POMDPs) with long-run average objectives, where the payoff is defined as the limit inferior of the expected average rewards. We consider the computational problem of approximating the long-run average value of a POMDP. In general, the long-run average value of a POMDP is neither computable nor approximable. We therefore consider the subclass of revealing POMDPs. Informally, a POMDP is revealing when the controller observes the underlying state with positive probability at each stage of the process. Our main contributions are threefold. First, we illustrate the practical relevance of this class of POMDPs through an application in control and optimization. Second, we present an exponential-time algorithm for the value approximation problem. Third, we establish EXPTIME-hardness by a reduction from the problem of almost-sure safety in POMDPs. Together, these results show that the problem of approximating the long-run average value in revealing POMDPs is EXPTIME-complete.\\

    \noindent \textbf{Keywords}: Partially observable Markov decision process $\cdot$ Long-run average objective $\cdot$ Revealing $\cdot$ Approximation algorithm $\cdot$ End-component $\cdot$ Computational complexity
\end{abstract}

\newpage


\section{Introduction}

    \emph{Partially observable Markov decision processes} (POMDPs) are a classical model for sequential decision-making under partial information~\cite{bertsekas1976DynamicProgrammingStochastic,papadimitriou1987complexity,kaelbling1998planning}.
    At each stage, the environment is in a hidden state and the controller chooses an action.
    Together, the current state and the chosen action determine a probability distribution over the successor state and an observed signal.
    Since the state is not directly observed, the controller must choose actions using only past actions and signals.
    All the information available is summarized by the \emph{belief}, which is the conditional probability distribution of the current state given the observed history of actions and signals.
    Two well-known models arise as special cases: \emph{Markov decision processes} (MDPs), in which the current state is fully observed~\cite{puterman1994}, and \emph{blind MDPs}, in which no information about the state is observed and which correspond to probabilistic finite automata~\cite{rabin1963probabilistic,paz1971introduction}.
    
    POMDPs arise in many applications such as communication networks and queueing systems~\cite{arapostathis1993discrete}, machine and inventory problems~\cite{smallwood1973optimal,wang2019inventory}, and reinforcement learning~\cite{kaelbling1996reinforcement}.
    Many of these applications model systems that operate over huge horizons, making long-run performance a natural optimization criterion.
    We consider the \emph{long-run average objective}, defined as $\liminf_{n\to\infty} \EE \left(\tfrac{1}{n}\sum_{m=1}^n G_m \right)$, where $\EE$ denotes the expectation and $G_m$ is the reward obtained at stage $m$.
    The corresponding \emph{value} is the supremum of this quantity over all strategies of the controller. 
    Notably, the value coincides with several classical definitions of long-run value, such as the asymptotic and uniform value~\cite{rosenberg2002blackwell}, uncertain duration value~\cite{NS10}, general uniform value~\cite{renault2017long}, and history-dependent value~\cite{VZ21}.
    
    In this paper, we focus on whether this value can be \emph{computed} in POMDPs.
    For MDPs, the value can be computed efficiently~\cite{puterman1994}.
    The situation changes radically under partial observation.
    For POMDPs, Madani et al.~\cite{madani2003undecidability} proved that no algorithm can compute, or even approximate, the value in general.
    This raises the natural question of identifying structural subclasses of POMDPs for which approximation becomes possible.

    We consider the subclass of \emph{revealing POMDPs}.
    Informally, a revealing POMDP requires that the controller directly observes the underlying state with positive probability at each stage.
    Consequently, although the state remains hidden in general, the belief occasionally collapses to a Dirac distribution on the revealed state.
    Our main contributions are the following:
    \begin{itemize}
        \item 
            First, \Cref{Section: Application to sensing} demonstrates how the abstract class of revealing POMDPs captures practically relevant problems in control and optimization.
        \item 
            Second, \Cref{Result: Approximating the uniform value for revealing POMDPs is decidable} establishes that approximating the long-run average value of revealing POMDPs is $\mathrm{EXPTIME}$-complete.
    \end{itemize}
    For the upper complexity bound, the proof proceeds in two steps.

    The first step identifies regions of the belief dynamics in which the long-run behavior is well defined.
    Since the set of beliefs is infinite, we work instead with the \emph{belief-support MDP}, which records only the support of the current belief. 
    Because this MDP has a finite set of states, it decomposes into \emph{end-components}.
    An end-component is a set of belief-supports and allowed actions in which the controller can remain forever.
    We fix a maximal end-component and restrict attention to strategies that keep the belief-support inside it.
    For these strategies, we prove that the corresponding finite-horizon values converge at an explicit rate to a long-run value that depends only on the end-component and not on the initial belief.

    The main argument is an asynchronous coupling between two copies of the process starting from arbitrary beliefs with support in the same maximal end-component.
    The first copy follows the given strategy until a state is revealed.
    The second copy waits until a state is revealed and then uses paths within the end-component to reach the state revealed in the first copy, possibly at a later stage.
    Once this happens, the two copies follow the same continuation strategy and obtain the same sequence of rewards from then on.
    Only the stages before this synchronization can differ, and since it occurs after an explicit number of stages with high probability, their contribution to the average vanishes at an explicit rate.
    A block argument then yields the convergence of the finite-horizon values.
    Finally, the bound obtained is uniform in the two initial beliefs, which shows that the limit depends on the end-component alone.
    
    The second step turns these local long-run average values into a reachability problem.
    For each state that can be revealed, we consider the best long-run value that the controller can secure by remaining in an end-component containing that state.
    We then construct a revealing POMDP, called \emph{Commit POMDP}, in which, whenever a state is revealed, the controller may commit to it.
    The corresponding commit action leads to a target state with probability equal to its best end-component value and to a sink state otherwise.
    We prove that the value of the Commit POMDP with reachability objectives coincides with the value of revealing POMDPs with the long-run average objectives. 
    Intuitively, committing replaces an infinite continuation within an end-component by a one-step bet with the same value.
    To prove this reduction, we use Chatterjee et al.~\cite[Lemma 5.3, p.~109]{chatterjee2022finite} and Venel and Ziliotto~\cite[Lemma 33, p.~2004]{venel2016strong}.
    These results show that, after finitely many stages, there exists a random belief together with continuation strategies whose long-run average objectives converge almost surely from every state in the support of that random belief.
    We use this property to relate these almost-sure long-run average objectives to the maximum safe values in the Commit POMDP.
    Since the end-component values are approximated by finite-horizon values, we actually construct an \emph{Approximate Commit POMDP} and show that this approximation changes its reachability value by at most $\eps$.
    Applying the existing approximation algorithm for revealing POMDPs with reachability objectives~\cite{asadi2026revealing} then yields the claimed $\mathrm{EXPTIME}$ upper bound.
    
    For the lower bound, we give a reduction from almost-sure safety in POMDPs, inspired by the hardness proof for revealing POMDPs with parity objectives~\cite{belly2025revelations}.
    The reduction makes unsafe states absorbing and assigns reward one to safe states and reward zero to unsafe states.
    It then adds, alongside every original transition, a transition that reveals the exact successor state and a revealing transition that resets the process to its initial state.
    If the original POMDP is almost-sure safe, the constructed revealing POMDP has value one.
    Otherwise, every strategy reaches an unsafe state almost surely, and hence the constructed POMDP has value zero.
    Thus, even a constant-error approximation distinguishes the two cases.
    Together with the reduction from almost-sure safety, this proves $\mathrm{EXPTIME}$-completeness.

\paragraph{Related work}
    Our result lies at the intersection of the literature on revealing POMDPs and that on stochastic optimization under partial observation.

    Our revealing condition coincides with the strongly revealing condition introduced by Belly et al.~\cite{belly2025revelations}.
    Belly et al.~\cite{belly2025revelations} proved that the almost-sure analysis for revealing POMDPs with parity objectives is $\mathrm{EXPTIME}$-complete.
    Asadi et al.~\cite{asadi2026revealing} extended this analysis by proving that the limit-sure and quantitative analysis for revealing POMDPs with parity objectives is in $\mathrm{EXPTIME}$.
    Chen and Liew studied \emph{intermittently observable MDPs}~\cite{chenIntermittentlyObservableMarkov2023} with discounted reward, in which the state is either perfectly observed or not observed at all.
    Avrachenkov et al.~\cite{avrachenkov2025constrained} studied constrained average rewards and proved, under a recurrence assumption on the underlying dynamics, that the belief MDP is unichain and satisfies strong duality.

    Computing or approximating the value of POMDPs has been extensively studied.
    However, most results~\cite{belly2025revelations, chatterjee2014partial,chatterjee2013survey,chatterjee2010probabilistic, gimbert2014deciding,asadi2026revealing} concern logical objectives, a different class of objectives; see Chatterjee et al.~\cite{chatterjee2012survey} for a survey.
    By contrast, algorithmic results for POMDPs with long-run average objectives are scarce.
    In general, no algorithm can compute, or even approximate, the long-run average value~\cite{madani2003undecidability}.
    Recently, Chatterjee et al.~\cite{chatterjee2022finite} proved that approximately optimal finite-memory strategies always exist.
    This implies that the approximation problem is recursively enumerable, but it does not provide an explicit approximation algorithm.
    Positive results exist for POMDPs under additional assumptions on the belief dynamics.
    Chatterjee et al.~\cite{chatterjee2025ergodic} proved that ergodic blind MDPs with long-run average objectives can be approximated algorithmically.
    Chatterjee et al.~\cite{chatterjee2026approximating} extended this result to POMDPs under a Doeblin condition, which imposes a uniform reset property on the belief dynamics.

\paragraph{Novelty}
    Our contribution is novel in the following respects.
    First, revealing POMDPs include partially observed nonergodic POMDPs, that is, POMDPs whose long-run average value depends on the initial belief.
    To the best of our knowledge, this is the first approximation algorithm for a general subclass of nonergodic POMDPs with long-run average objectives.
    Second, to the best of our knowledge, this is the first general subclass of POMDPs with long-run average objectives for which the approximation problem is $\mathrm{EXPTIME}$-complete.
    Third, the revealing condition is stated directly in terms of the data of the model and does not impose any constraints on the set of beliefs.
    Finally, the application presented in \Cref{Section: Application to sensing} shows that the revealing condition arises naturally in stochastic control and optimization models.
    
\paragraph{Organization of the paper}
    \Cref{Section: Preliminaries} introduces POMDPs and the long-run average objective.
    \Cref{Section: Revealing POMDPs} defines revealing POMDPs and states the main result.
    \Cref{Section: Application to sensing} describes a sensing architecture that induces a revealing POMDP with a long-run average objective.
    \Cref{Seection: proof upper bound} proves the $\mathrm{EXPTIME}$ upper bound for approximating the long-run average value in revealing POMDPs.
    Finally, \Cref{Section: Hardness} proves the $\mathrm{EXPTIME}$ lower bound for approximating the long-run average value in revealing POMDPs
    

\section{Preliminaries}\label{Section: Preliminaries}

This section introduces partially observable Markov decision processes (POMDPs).

\paragraph{Notation}
    Calligraphic letters (e.g., $\I,\H,\K,\S$) denote sets, their elements (e.g., $i$, $h$, $k$, $s$) appear in lowercase, and random elements use uppercase (e.g., $I$, $H$, $K$, $S$). 
    Given a finite set $\C$, we denote the set of probability distributions on $\C$ by $\Delta(\C)$.
    Given an element $c \in \C$, we denote the Dirac measure on $c$ by $\delta_{c}$.
    We write $[a \until b]$ for the integer set $\{a,a+1,\ldots,b\}$, where $a$ and $b$ are integers.
    The set of real numbers is denoted by $\RR$, while $\NN$ and $\NN^*$ represent the sets of natural numbers and nonzero natural numbers, respectively. 
    Given a vector $b \in \RR^{|\K|}$, we denote its transpose by $b^\top$.
    The maximum over an empty set is $0$.
    
\paragraph{Model} 
    A \textit{POMDP}, denoted by $\POMDP$, is defined by a tuple $\POMDP=(\K,\I,\S,p,g),$ where:
    \begin{itemize}
        \item 
            $\K$ is the finite set of states;
        \item 
            $\I$ is the finite set of actions;
        \item 
            $\S$ is the finite set of signals;
        \item 
            $p \colon \K\times \I\rightarrow\Delta(\K\times \S)$ is the transition probability function;
        \item 
            $g \colon \K\times\I\rightarrow[0,1]$ is the stage reward function.
    \end{itemize}

\paragraph{Related models}
    Markov decision processes~\cite{puterman1994} are POMDPs in which the observed signal is the successor state.
    Formally, an MDP is denoted by $M = (\K, \I, p,g)$ with transition function $p \colon \K \times \I \to \Delta(\K)$. 
    Blind MDPs~\cite{madani2003undecidability} are POMDPs in which the controller is said to be \emph{blind}, i.e., the signal set is a singleton.
    
\paragraph{Dynamic}
    An \emph{initial distribution} is a probability distribution $b_1\in \Delta(\K)$ over the states, according to which the initial state $K_1$ is drawn.
    A POMDP starting from the initial distribution $b_1\in \Delta(\K)$, denoted by $\POMDP(b_1)$, evolves as follows.
    The controller knows $b_1$, but does not know $k_1$, the realization of $K_1$.
    At each stage $m\in \NN^*$: 
    \begin{enumerate}
        \item 
            The controller selects an action $I_m$;
        \item 
            A stage reward $G_m\defas g(K_m,I_m)$ is generated, but \emph{not} observed by the controller;
        \item 
            Next, the successor state $K_{m+1}$ and the public signal $S_{m+1}$ are drawn according to $p( \,\cdot \given K_m, I_m)$.
        \item 
            Finally, the controller observes the tuple $(I_m,S_{m+1})$ but neither $K_{m+1}$ nor $G_m$.
    \end{enumerate}   
    
\paragraph{Matrices}
    For every action $i\in \I$ and signal $s\in\S$, define the matrix $P(i,s)$ by setting, for all states $k,k'\in\K$,
    \[
        P_{k,k'}(i,s) \defas p(k',s \given k,i) \defas p(k,i)(k',s).
    \]
    Denote by $\P\defas\left\{P(i,s)\colon\; (i,s)\in\I\times \S\right\}$ the set of all such matrices.
    Each matrix $P\in\P$ represents the joint probabilities of transitioning from the current state $k\in \K$ to a successor state $k'\in \K$ and observing signal $s\in\S$, given the chosen action $i\in\I$. 

\paragraph{History}
    A \emph{history} before stage $m$ is a sequence $(i_1,s_2,\ldots,i_{m-1},s_m)$. 
    The set of histories before stage $m$ is denoted by $\H_m\defas (\I\times\S)^{m-1}$, with $(\I\times \S)^{0}\defas\{\emptyset\}$. 
    Given a history $h_m=(i_1,s_2,\ldots,s_m)$ and $(i,s)\in \I\times \S$, we denote their concatenation by
    \[
        h_m\times(i,s)\defas(i_1,s_2,\ldots,s_m,i,s).
    \]
    
\paragraph{Play}
    A play in a POMDP is an infinite sequence $\play = (k_1, i_1, s_2, k_2, i_2, s_3, k_3, \ldots)$ of states, actions, and signals such that, for all $m \ge 1$, $P_{k_m,k_{m+1}}(i_m,s_{m + 1}) > 0$. 
    The set of all plays is denoted by $\Plays$.
    
\paragraph{Strategy} 
    A (history-dependent) \emph{strategy} is a mapping $\sigma\colon\bigcup_{m\ge 1}\H_m\to \I$ with $\sigma(i\given h_m) \defas \sigma(h_m)(i)$ the probability of choosing action $i\in \I$ given the history $h_m \in \H_m$.
    We denote the set of strategies by $\Sigma$.

\begin{remark}
    A strategy is \emph{pure} if, for every history, it assigns a Dirac measure to some action $i\in \mathcal{I}$.
    A strategy is \emph{behavioural} if, for every history, it assigns a probability distribution over~$\I$. 
    By~\cite{feinberg1996,venel2016strong}, allowing behavioural strategies does not change the value of the POMDP.
    Therefore, restricting attention to pure strategies is without loss of generality and our results remain valid for behavioural strategies.
\end{remark}

\paragraph{Random history}
    Given an initial distribution $b_1 \in\Delta(\K)$, a strategy $\sigma \in \Sigma$, and a stage $m \in \NN^*$, define the \emph{random history} at stage $m$ by $H_m \defas(I_1,S_2,\ldots,I_{m-1},S_{m})$, which takes values in $\H_m$.

\paragraph{Continuation strategy}
    Given a history $h_m\in \H_m$ with $m\in \NN^*$, the $h_m$-shift of a strategy $\sigma\in \Sigma$, denoted by $\sigma[h_m]$, is defined by, for all $m'\in \NN^*$, $\sigma[h_m](h_{m'})\defas \sigma(h_m,h_{m'})$. 
    We call $\sigma[H_m]$ the random shift at stage $m$.
    In other words, $\sigma[h_m]$ corresponds to the continuation of the strategy $\sigma$ given that the history before stage $m$ was $h_m$.
    
\paragraph{Probability measure} 
    Let $\Omega \defas (\K\times\I\times \S)^{\NN^*}$ be the set of plays. 
    Given an initial distribution $b_1\in\Delta(\K)$ and a strategy $\sigma\in \Sigma$, denote by $\PP_{\sigma}^{b_1}$ the induced probability measure on $\Omega$, which is supported on $\Plays$.
    Similarly, denote the corresponding expectation under this measure by  $\EE^{b_1}_{\sigma}$.
    
\paragraph{Admissible history}
    Given an initial distribution $b_1 \in\Delta(\K)$ and a stage $m \in \NN^*$, define the set of admissible histories from $b_1 \in \Delta(\K)$ by
    \[
        \H_m(b_1) \defas \left\{ h_m \in \H_m \colon\; \exists \sigma \in \Sigma\quad \PP^{b_1}_{\sigma} (H_m=h_m)>0\right\}.
    \]

\paragraph{Belief}
    Given $\sigma\in \Sigma$, $b_1\in \Delta(\K)$ and $h_m\in \H_m(b_1)$, the \emph{belief} induced by $h_m$, denoted by $b^{b_1}_{h_m}\in \Delta(\K)$, is the conditional distribution of the current state given that history, i.e., for every state $k\in \K$,
    \[
        b^{b_1}_{h_m}(k)\defas \PP_\sigma^{b_1}\left(K_m=k\given H_m=h_m\right).
    \]
    The (random) belief at stage $m\in \NN^*$ is denoted by $B_m\defas b^{b_1}_{H_m}$ where $B_1=b_1$.

\paragraph{Belief update}
    Given $b\in \Delta(\K)$, $i\in \I$, and $s\in \S$ such that $\PP(s\given b,i)>0$, the belief update is given by Bayes' rule
    \[
        \Phi(b,i,s)(k')
        \defas
        \dfrac{\sum_{k\in \K}b(k)P_{k,k'}(i,s)}{\PP(s\given b,i)},
    \]
    where $\PP(s\given b,i)$ is the probability of observing the signal $s$ given the current belief $b$ and action~$i$, defined by $\PP(s\given b,i)\defas \sum_{k\in \K}\sum_{k'\in \K}b(k)P_{k,k'}(i,s)$.
                
\paragraph{Objective}
    We consider the following objective functions: 
    \begin{itemize}
        \item 
            \textbf{$n$-stage:}
            Given a finite horizon $n\in\NN^*$, an initial belief $b_1\in \Delta(\K)$, and a strategy $\sigma\in \Sigma$, the $n$-stage objective is 
            \[
                \gamma_n(b_1,\sigma)\defas\EE_{\sigma}^{b_1}\left(\dfrac{1}{n}\sum_{m=1}^{n} G_m\right).
            \]
            The $n$-stage value is $v_n(b_1)\coloneqq\sup_{\sigma\in\Sigma} \gamma_n(b_1,\sigma)$.
        \item 
            \textbf{Long-run average:}
            Given an initial belief $b_1\in \Delta(\K)$ and a strategy $\sigma\in \Sigma$, the long-run average objective is
            \[
                \gamma(b_1,\sigma)\defas\liminf_{n\to\infty}\EE_{\sigma}^{b_1}\left(\dfrac{1}{n}\sum_{m=1}^{n} G_m\right).
            \]
            The long-run average value is $v(b_1)\coloneqq\sup_{\sigma\in\Sigma} \gamma(b_1,\sigma)$.
        \item 
            \textbf{Reachability:}
            Given an initial belief $b_1\in \Delta(\K)$, a set of target states $\X\subseteq \K$ and a strategy $\sigma\in \Sigma$, the reachability objective is 
            \[
                \gamma_{R}(b_1,\sigma)\defas \PP_{\sigma}^{b_1}(\exists m\in \NN^*\colon\; K_m\in \X).
            \]
            The reachability value is $v_{R}(b_1)\defas\sup_{\sigma\in \Sigma} \gamma_{R}(b_1,\sigma)$.
    \end{itemize}

    \begin{remark}
        \label{remark: values coincides}
        By~\cite{rosenberg2002blackwell,NS10,renault2017long,VZ21}, the long-run average value coincides with other classical limit values in POMDPs, i.e., 
        \begin{align*}
            v(b_1) 
            &= \sup_{\sigma\in \Sigma}\EE_{\sigma}^{b_1}\left(\liminf_{n\to\infty}\dfrac{1}{n}\sum_{m=1}^nG_m\right) \\
            &= \lim_{n\to\infty}\sup_{\sigma\in \Sigma}\EE_{\sigma}^{b_1}\left(\dfrac{1}{n}\sum_{m=1}^{n} G_m\right)\\
            &= \lim_{\lambda\to 0}\sup_{\sigma\in \Sigma}\EE_{\sigma}^{b_1}\left(\sum_{m\in \NN^*}\lambda(1-\lambda)^{m-1}G_m\right).
        \end{align*}
    \end{remark}

    \paragraph{Approximation problem}
        Given a POMDP $\POMDP$, an initial belief $b_1\in \Delta(\K)$, and $\eps>0$, the approximation problem for the long-run average value is to compute $\overline{v}$ such that
        \[
            \left|\overline{v} - v(b_1)\right| \le \eps.
        \]


\section{Revealing POMDPs}\label{Section: Revealing POMDPs}

This section introduces \emph{revealing POMDPs} and our main contributions.

\paragraph{Class description}
    The subclass of \emph{revealing POMDPs} was previously studied in~\cite{asadi2026revealing,belly2025revelations}.
    Intuitively, the revealing property ensures that, whenever a state is visited, the controller is informed of that state with a positive probability.

    \begin{Definition}[Revealing POMDP]
        A POMDP is revealing if for every action $i\in \I$ and pair of states $k,k'\in \K$,
        \[
            \sum_{s\in \S}P_{k,k'}(i,s)>0\, \implies \,\exists s^*\in \S\text{ such that }P_{k,k'}(i,s^*)>0 \text{ and }\sum_{\underline{k}\in \K}\sum_{\overline{k}\in \K\setminus\{k'\}}P_{\underline{k},\overline{k}}(i,s^*)=0.
        \]
    \end{Definition}

    \begin{remark}
        The class of revealing POMDPs is nonergodic, i.e., the  long-run average value depends on the initial belief in general.
        Indeed, consider a revealing POMDP with two states $k_1$ and $k_2$, one action $i$, and two signals $s_1$ and $s_2$ such that $P_{k_1,k_1}(i,s_1)=P_{k_2,k_2}(i,s_2)=1$, with rewards $g(k_1,i)=0$ and $g(k_2,i)=1$.
        If the initial belief is $b_1=\delta_{k_1}$, then the long-run average value is $v(b_1)=0$, while if the initial belief is $b_1=\delta_{k_2}$, then the long-run average value is $v(b_1)=1$.
        Therefore, the long-run average value depends on the initial belief.
    \end{remark}
    
    Denote the minimum nonzero probability in the transition function by
    \begin{align*}
        p_{\min}\defas\min \left\{P_{k,k'}(i, s)\colon\; k,k' \in \K,\;i \in \I,\;s \in \S,\;P_{k,k'}(i, s) > 0\right\}.
    \end{align*}

\paragraph{Overview of results}

    The main results of this paper are summarized in the next theorem.
    
    \begin{Theorem}\label{Result: Approximating the uniform value for revealing POMDPs is decidable}
        Approximating the value for revealing POMDPs with long-run average objectives is $\mathrm{EXPTIME}$-complete. 
        In particular,
        \begin{itemize}
            \item[1.] 
                Approximating the value for revealing POMDPs with long-run average objectives is in $\mathrm{EXPTIME}$.
            \item[2.]
                Approximating the value for revealing POMDPs with long-run average objectives is $\mathrm{EXPTIME}$-hard, even for every fixed approximation error $\eps<1/2$.
        \end{itemize}
        
    \end{Theorem}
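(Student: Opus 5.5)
The proof splits into the two items of \Cref{Result: Approximating the uniform value for revealing POMDPs is decidable}, following the outline in the introduction.

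\textbf{Upper bound.} I would work with the belief-support MDP, whose states are the nonempty subsets of $\K$ (at most $2^{|\K|}$ of them), and compute its maximal end-components in time exponential in $|\K|$. Fix a maximal end-component $\mathcal{E}$ and consider strategies that keep the belief-support inside $\mathcal{E}$. The first aim is to show that the restricted $n$-stage values $v_n^{\mathcal{E}}(b)$ converge, at an explicit rate, to a limit $w(\mathcal{E})$ that does not depend on $b$, provided $\supp(b)$ lies in $\mathcal{E}$. For this I would use an asynchronous coupling.
\begin{itemize}
\item By revealingness, from any belief the state is revealed within one step with probability at least $p_{\min}$, so a revelation occurs within $N$ steps with probability at least $1-(1-p_{\min})^N$.
\item Inside an end-component, every revealed singleton can be steered to any other revealable singleton of $\mathcal{E}$ within at most $2^{|\K|}$ steps, with probability at least $p_{\min}^{2^{|\K|}}$. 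Repeating such attempts gives synchronization within $T$ stages except with probability exponentially small in $T/2^{|\K|}$.
\item After synchronization the two copies earn identical rewards. Hence $|v_n^{\mathcal{E}}(b)-v_n^{\mathcal{E}}(b')|\le C/n$ for an explicit constant $C$, which is exponential in $|\K|$ and $1/p_{\min}$.
\item A block (subadditivity-type) argument then gives $|v_n^{\mathcal{E}}-w(\mathcal{E})|\le C'/n$.
\end{itemize}
Consequently, an $\eps$-approximation of $w(\mathcal{E})$ is obtained by computing $v_n^{\mathcal{E}}$ at a single revealed Dirac belief, with $n$ of size $C'/\eps$. I would compute this by dynamic programming over finite-horizon beliefs, or more cheaply by observing that only finitely many beliefs are reachable before the next revelation.

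\textbf{Reduction to reachability.} For each revealable state $k$, let $w^*(k)$ be the maximum of $w(\mathcal{E})$ over the end-components containing $\{k\}$. I would build the Approximate Commit POMDP: at every revealed state $k$, add a commit action that moves to a target state with probability $\tilde w(k)\approx w^*(k)$ and to a sink otherwise. I would then show that its reachability value equals $v(b_1)$ up to $\eps$.
\begin{itemize}
\item $\ge$: committing at $k$ is replaced in the original POMDP by playing the near-optimal end-component strategy, which achieves $w^*(k)$ in the long run.
\item $\le$: by \cite[Lemma 5.3]{chatterjee2022finite} and \cite[Lemma 33]{venel2016strong}, after finitely many stages there are a random belief and continuation strategies whose average rewards converge almost surely. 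Almost surely the belief-support eventually stays in some end-component and a revelation occurs there, so the almost-sure limit is bounded by $w^*$ of the revealed state. Committing at that revelation therefore does at least as well.
\end{itemize}
Perturbing commit probabilities by $\eps$ moves the reachability value by at most $\eps$. I would finish by invoking the EXPTIME algorithm of \cite{asadi2026revealing} on this revealing POMDP, whose size is exponential only through numeric encoding. I expect this $\le$ direction to be the main obstacle: making the almost-sure convergence align with end-components of the belief-support MDP requires care with the liminf of expectations versus the expectation of liminf, and Remark~\ref{remark: values coincides} is the tool for passing between them.

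\textbf{Lower bound.} I would reduce from almost-sure safety in POMDPs, which is EXPTIME-complete.
\begin{itemize}
\item Make unsafe states absorbing with reward $0$, and give safe states reward $1$.
\item For every transition, add with small probability a copy that emits a signal revealing the successor, and with small probability a revealing reset to the initial state. This makes the POMDP revealing while keeping the original signal structure on the main branch.
\end{itemize}
If an almost-sure safe strategy exists, it remains safe after revelations and resets (restart it), so the value is $1$. Otherwise, from the initial belief every strategy hits an unsafe state with positive probability. I would argue via finiteness of belief-supports that this probability is bounded below uniformly over histories that are consistent with the initial support after resets. Repeated resets then make absorption almost sure, and the value is $0$. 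An approximation with $\eps<1/2$ separates $0$ from $1$, which gives EXPTIME-hardness.
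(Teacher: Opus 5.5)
Your outline follows the paper's architecture (coupling, block argument, Commit POMDP, the reachability algorithm, reduction from safety). However, two steps fail as written, and the first already breaks the $\mathrm{EXPTIME}$ bound.

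\textbf{The upper bound is only doubly exponential.} You steer along paths of length up to $2^{|\K|}$ that succeed with probability $p_{\min}^{2^{|\K|}}$. Your bound on the expected synchronization time is then doubly exponential in $|\K|$, and so are your constant $C$ and the horizon $n\approx C'/\eps$. They are not exponential, as you assert; note also that a constant exponential in $1/p_{\min}$ would itself be doubly exponential in the binary input. Any backward induction over $n$ stages then costs doubly exponential time.

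The missing idea is a structural property of maximal end-components of revealing POMDPs: if $i\in\E(q)$ and $k\in q$, then $i\in\E(\{k\})$ and $\Post(\{k\},i)\subseteq\Q$. The paper proves this by enlarging $\C$ with the downward closure of $\Q$ and invoking maximality (Lemma~\ref{Result: safe actions on singleton supports}). With it, any support path between singletons can be traced backwards to a chain of states. Each transition in that chain has a revealing signal, which gives a path of revealed singletons using $\C$-safe actions. Removing cycles yields length at most $|\K|-1$ and success probability at least $p_{\min}^{|\K|}$ (Lemma~\ref{Result: short paths between singleton supports}), hence a horizon $2^{O(|\K|\log(1/p_{\min})+\log|\K|+\log(1/\eps))}$.

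Your computation of $v_n$ also fails. Exact dynamic programming over the beliefs reachable in $n$ stages involves up to $(|\I||\S|)^n$ beliefs. Your cheaper variant is false: the set of beliefs reachable from a Dirac belief before the next revelation is in general infinite, since long runs of non-revealing signals can drive the belief through infinitely many values. The paper instead runs the Bellman recursion on a support-preserving grid of resolution $\ell=\lceil 2(n+1)|\K|/\eta\rceil$. It uses that $v_{n,\C}$ is $1$-Lipschitz among beliefs with the same support, which keeps every backup $\C$-safe and costs $2^{O(|\K|\log(n|\K|/\eta))}|\I||\S|$. Finally, the commit weights must be rounded to $O(\log(1/\eps))$ bits before calling the reachability algorithm. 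Passing an exponentially long encoding, as your last sentence suggests, would again give doubly exponential time.

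\textbf{The direction $v(b_1)\le v_R'(b_1')$ has a more substantive gap.} The liminf/expectation issue you single out is routine; the appendix handles it with Remark~\ref{remark: values coincides}. The real problem is your claim that the belief-support almost surely eventually stays in some end-component, which is not available for belief-support processes. The probability of moving from $q$ to a given $q'\in\Post(q,i)$ is a sum of terms $\PP(s\given b,i)$. These depend on the belief and have no uniform positive lower bound, so the usual argument that support--action pairs used infinitely often form an end-component breaks down. One can build revealing examples where, with positive probability, a strategy plays some $(q,i)$ infinitely often while no end-component contains $(q,i)$. Each time, it waits for a long non-revealing run that makes the mass on the only state leading to a trap summably small.

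Moreover, even where the claim holds, committing \emph{at that revelation} refers to a random time that is not a stopping time. At a revelation you cannot tell whether the support will stay in the end-component, so this does not define a strategy of the Commit POMDP.

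The paper proceeds differently:
\begin{itemize}
\item After a revealing history $h_m$, the almost-sure limit from Lemma~\ref{result: CSZ theorem} is a deterministic constant shared by all consistent initial states.
\item The supports and actions occurring with positive probability under $\sigma^*[h_m]$ form a closed graph. Its bottom components $\C_n$ are end-components on which the continuation is $\C_n$-safe.
\item Combined with Lemma~\ref{Result: Independence of the Safe Value}, this gives $v_{\safe}(k)\ge$ that constant for every $\{k\}\in\Q_n$.
\item A \emph{new} strategy reaches $\bigcup_n\Q_n$ almost surely along shortest paths and commits at the next revelation there.
\end{itemize}

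Your lower-bound reduction matches the paper's and is fine.
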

    \noindent The first item is proved in \Cref{Seection: proof upper bound} and the second item is proved in \Cref{Section: Hardness}.


\section{Application to Unreliable Sensing Architectures}
\label{Section: Application to sensing}
    
    Controller synthesis problems aim to design a controller for a stochastic environment whose state is not directly observed~\cite{kaelbling1998planning}.
    The controller learns about the state through sensors or communication channels.
    However, sensors may be unreliable in several ways~\cite{ni2009sensor}. We model erasures, in which a sensor either reports the exact value it monitors or reports nothing.
    This section describes a sensing architecture that induces a revealing POMDP with a long-run average objective.
    Finally, we explain how our approach assesses sensing architectures against their cost, which has many applications beyond the long-run average objective.

\paragraph{Controlled system}
    Consider the interaction between a controller and a stochastic system.
    The system is described by $d$ variables $u^1,\ldots,u^d$, where each variable $u^r$ takes values in a finite set $\U^r$.
    A state of the system is the vector of the current values of all the variables, so the state space is a set $\K\subseteq\U^1\times\cdots\times\U^d$.
    For example, if every variable is Boolean, then $\U^r=\{0,1\}$ for every $r$ and $\K\subseteq\{0,1\}^d$.
    At each stage:
    \begin{itemize}
        \item 
            The controller selects an action from a finite set $\I$.
        \item 
            The state is updated stochastically as a function of the current state and of the selected action.
        \item
            The stage reward $g(k,i)\in[0,1]$ measures the current performance of the system, such as production efficiency, availability, service quality, or a safety cost.
    \end{itemize}

\paragraph{Sensing architecture}
    The controller does not observe the state directly, as each variable is instead monitored by its own probabilistic sensor (or communication channel).
    At each stage, the $r$-th sensor either reports the current value of the $r$-th variable or fails and returns the erasure symbol $\erase$, in which case the controller knows that this variable was not reported.
    The signal observed by the controller is the vector of the $d$ reports.
    Hence, the set of signals satisfies $\S\subseteq\prod_{r=1}^d\left(\U^r\cup\{\erase\}\right)$.
    Formally, conditionally on the successor state $K_{m+1}=k'$, the signal $S_{m+1}=\left(S_{m+1}^1,\ldots,S_{m+1}^d\right)$ satisfies $S_{m+1}^r\in\left\{{k'}^r,\erase\right\}$ for every $r\in[1\until d]$.
    A signal therefore reveals only those components whose sensors did not fail.
    The exception is the complete signal $s_{k'}\defas\left({k'}^1,\ldots,{k'}^d\right)$, in which every sensor reports and which identifies the successor state $k'$ uniquely.
    The interaction between the controller and the system thus evolves as a POMDP $\POMDP=(\K,\I,\S,p,g)$.

\paragraph{Revealing architecture}
    We now give a structural condition on the sensors under which the POMDP becomes revealing.
    Assume that every feasible transition produces the complete signal with positive probability, that is, for every $k,k'\in\K$ and $i\in\I$,
    \[
        \sum_{s\in\S}P_{k,k'}(i,s)>0\quad\Longrightarrow\quad P_{k,k'}(i,s_{k'})>0.
    \]
    This condition holds in particular when the sensors fail independently: if, conditionally on a feasible transition from $k$ to $k'$ under action $i$, each sensor $r$ reports ${k'}^r$ with probability $\rep_r(k'\given k,i)>0$, independently of the other sensors, then
    \[
        P_{k,k'}(i,s_{k'})=\left(\sum_{s\in\S}P_{k,k'}(i,s)\right)\prod_{r=1}^d\rep_r(k'\given k,i)>0.
    \]
    Under this condition, the POMDP is revealing: whenever the transition from $k$ to $k'$ is feasible, the complete signal $s_{k'}$ occurs with positive probability and is produced only when the successor state is $k'$.
    This is a natural and practical scenario for controller synthesis against a stochastic environment: the observation is partial and the sensors fail intermittently, yet the resulting model is revealing.

\paragraph{Synthesis for a given architecture}
    The long-run average objective evaluates the persistent operating performance of the system, rather than the probability of satisfying a qualitative objective such as reachability or parity.
    By \Cref{Result: Approximating the uniform value for revealing POMDPs is decidable}, for every $\eps>0$, the value $v(b_1)$ can be approximated within $\eps$ in exponential time.
    Our result therefore provides an algorithmic method for evaluating the best long-run performance that a controller can achieve under intermittent but occasionally exact observations.

\paragraph{Choosing a sensing architecture}
    The reliability of the sensors is often a design parameter rather than a given.
    For example, more precise sensors report more often, which changes the information available to the controller and the performance it can achieve, but they also come at a higher price.
    Our result makes this trade-off quantitative.
    Let $\Theta$ be a finite collection of candidate architectures sharing the same state space $\K$, action set $\I$, and reward function $g$.
    Each architecture $\theta\in\Theta$ determines a signal set $\S_\theta$ and a transition-and-signal function $p_\theta\colon\K\times\I\to\Delta(\K\times\S_\theta)$, hence a POMDP $\POMDP_\theta=(\K,\I,\S_\theta,p_\theta,g)$.
    Assume that every $\POMDP_\theta$ is revealing, and denote by $v_\theta(b_1)$ its long-run average value from the initial belief $b_1$.
    For two architectures $\theta,\theta'\in\Theta$, the difference
    \[
        v_{\theta'}(b_1)-v_\theta(b_1)
    \]
    measures the gain in long-run performance obtained by replacing $\theta$ with $\theta'$.
    Suppose that each architecture $\theta$ carries a cost $c(\theta)\in [0,1]$.
    Therefore, replacing $\theta$ with $\theta'$ becomes profitable when the gain in performance exceeds the additional cost of the sensors, that is, 
    \[
        v_{\theta'}(b_1)-v_\theta(b_1)>c(\theta')-c(\theta).
    \]
    Our result thus answers two distinct questions: how to control the system with the sensors at hand and whether more precise sensors are worth their price.


\section{Proof of the EXPTIME Upper Bound}

\label{Seection: proof upper bound}

\paragraph{Proof sketch}
    The proof of the first item of \Cref{Result: Approximating the uniform value for revealing POMDPs is decidable} proceeds as follows:
    \begin{itemize}
        \item
            In \Cref{Section: Safetiness}, we introduce safe strategies in maximal end-components of the belief-support MDP and establish an explicit convergence rate at which their finite-horizon values converge to the corresponding long-run average values.
        \item 
            In \Cref{Section: Reduction to Reachability Objectives}, we construct a Commit POMDP and prove that its reachability value coincides with the long-run average value of the original POMDP.
        \item 
            In \Cref{Section: Proof of Theorem}, we combine the reduction with the approximation algorithm for reachability objectives and establish the $\mathrm{EXPTIME}$ upper bound.
    \end{itemize}

\subsection{Safe Values in Revealing POMDPs}\label{Section: Safetiness}
    
    This section introduces safe strategies within end-components of the belief-support MDP.
    We then establish an explicit convergence rate at which their finite-horizon values converge to the corresponding long-run average values.
    
    \subsubsection{End-components in POMDPs}
    
    \paragraph{Belief support}
        The belief support of $b\in \Delta(\K)$ is $\supp(b)\defas\{k\in \K\colon\; b(k)>0\}$.
    
    \paragraph{Belief-support MDP}
        Consider a POMDP $\POMDP$ with initial belief $b_1\in \Delta(\K)$.
        The \emph{belief-support MDP} (BS-MDP), denoted by $\MDP_B$, is defined by the tuple $\MDP_B=\left(\beliefsupport,\I,p_B,\supp(b_1)\right)$, where:
        \begin{itemize}
            \item 
                $\beliefsupport$ is the set of nonempty subsets of $\K$;
            \item 
                $\I$ is the set of actions;
            \item 
                $p_B\colon \beliefsupport\times \I\to \Delta\left(\beliefsupport\right)$ is the transition function defined by, for every $q,q'\in \beliefsupport$ and $i\in \I$, 
                \[
                    p_B(q'\given q,i)\defas \tfrac{\textbf{1}_{\left\{q'\in \Post(q,i)\right\}}}{|\Post(q,i)|},
                \]
                where $\Post$ is the set of reachable posterior supports defined by, for every $q\in \beliefsupport$ and $i\in \I$, $\Post(q,i)\defas \left\{\psi(q,i,s)\colon\; s\in \S \text{ and } \psi(q,i,s)\neq\emptyset\right\}$ with the support-update function defined by, for every $q\in \beliefsupport$, $i\in \I$, and $s\in \S$, $\psi(q,i,s)\defas\left\{k' \in \K \colon\; \exists k \in q,\; P_{k,k'}(i,s)>0\right\}$.
            \item 
                $\supp(b_1)\in \beliefsupport$ is the initial state of $\MDP_B$.
        \end{itemize}
        
    \paragraph{End-component}
        Consider an MDP $M = (\K, \I, p,g)$ with initial belief $b_1\in \Delta(\K)$.
        An end-component, denoted by $\C$, is defined by a pair $\C = (\Q, \E)$, where $\Q \subseteq \K$ is a subset of states and $\E \colon \Q \rightrightarrows \I$ assigns a nonempty set of actions to each state with the following properties:
        \begin{itemize}
            \item 
                \emph{Closedness}:
                For every state $q \in \Q$ and action $i\in \E(q)$, we have that $\supp(p(q, i)) \subseteq \Q$;
            \item
                \emph{Strong connectivity}:
                For every pair of states $q, q' \in \Q$, there exist states $q_1=q, q_2, \dots, q_n=q'$ such that, for every $m \in [1 \until n-1$], there exists an action $i_m \in \E(q_m)$ with $p(q_{m+1}\given q_m, i_m)> 0$.
                
        \end{itemize}
        The finite set of end-components is denoted by 
        \[
            \mathfrak{C}\defas\left\{\C=(\Q,\E)\colon \C \textnormal{ is an end-component of }\MDP\right\}.
        \]
    
        \paragraph{Revelation in end-component}
        Consider a POMDP and an end-component $\C=(\Q,\E)$ of the BS-MDP $\MDP_B$.
        A state $k\in \K$ can be revealed in $\C$ if $\{k\}\in \Q$.
        The set of Dirac beliefs on the states that can be revealed in $\C$ is denoted by
        \[
            \D_\C\defas\left\{\delta_k\colon\; \{k\}\in \Q\right\}.
        \]
    
        The next proposition shows that, in a revealing POMDP, every state occurring in some belief-support of an end-component can be revealed in that end-component.
        
        \begin{Proposition}\label{Result: revelation in end-component}
            Consider a revealing POMDP $\POMDP$ and its BS-MDP $\MDP_B$.
            Fix an end-component $\C=(\Q,\E)$ of $\MDP_B$.
            Then, for every $q\in \Q$ and $k\in q$, we have that $\{k\}\in \Q$.
            In particular,
            \[
                \bigcup_{q\in \Q}q=\left\{k\in \K\colon\; \{k\}\in \Q\right\}\qquad\text{ and }\qquad\D_\C\neq \emptyset.
            \]
        \end{Proposition}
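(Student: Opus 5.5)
The plan is to use strong connectivity of $\C$ to find an action in $\E$ that leads from a support containing $k$ into one containing a successor of $k$. Then I apply the revealing property to that transition to get a Dirac successor support, and finally use closedness of $\C$ together with a predecessor argument to land exactly on $\{k\}$.

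Fix $q\in\Q$ and $k\in q$. Since $\C$ is strongly connected, there is a path in $\C$ from $q$ back to $q$ of length at least one (if $\Q=\{q\}$, take the self-loop given by any $i\in\E(q)$, which exists by nonemptiness and closedness). Following this path backwards, I will argue that $k$ has a "predecessor" in the path. Concretely, consider the path $q_1,\dots,q_n=q$ with actions $i_m\in\E(q_m)$ and $q_{m+1}\in\Post(q_m,i_m)$, so $q_{m+1}=\psi(q_m,i_m,s_m)$ for some signal $s_m$. Since $k\in q_n=\psi(q_{n-1},i_{n-1},s_{n-1})$, there is $\underline k\in q_{n-1}$ with $P_{\underline k,k}(i_{n-1},s_{n-1})>0$. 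In particular $\sum_s P_{\underline k,k}(i_{n-1},s)>0$.

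By the revealing property there is $s^*$ with $P_{\underline k,k}(i_{n-1},s^*)>0$ and $P_{k'',\overline k}(i_{n-1},s^*)=0$ for all $k''$ and all $\overline k\neq k$. Hence $\psi(q_{n-1},i_{n-1},s^*)$ contains $k$, because $\underline k\in q_{n-1}$, and contains nothing else. So $\psi(q_{n-1},i_{n-1},s^*)=\{k\}$, and this set lies in $\Post(q_{n-1},i_{n-1})$. Since $i_{n-1}\in\E(q_{n-1})$, closedness gives $\supp(p_B(q_{n-1},i_{n-1}))=\Post(q_{n-1},i_{n-1})\subseteq\Q$. Therefore $\{k\}\in\Q$.

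The two displayed consequences then follow. The inclusion ``$\subseteq$'' is exactly the claim just proved. The inclusion ``$\supseteq$'' holds because $\{k\}\in\Q$ gives $k\in\bigcup_{q\in\Q}q$. Since $\Q$ is nonempty, being an end-component, and each $q\in\Q$ is a nonempty subset, pick any $k\in q$ to obtain $\delta_k\in\D_\C$, so $\D_\C\neq\emptyset$.

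The main obstacle is only the first step: guaranteeing that every $q\in\Q$ is entered by some transition inside $\C$. Strong connectivity as stated allows the trivial path $n=1$, so I instead use nonemptiness of $\E(q)$ and closedness to get some successor $q'\in\Q$, and then strong connectivity from $q'$ back to $q$ to produce a genuine incoming edge.
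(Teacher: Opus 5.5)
Your proposal is correct and follows essentially the same route as the paper: find an edge of $\C$ entering $q$, pull $k$ back to a predecessor state, use the revealing property to get $\{k\}\in\Post(\another{q},\another{i})$, and conclude by closedness. Your one-step-out-and-back trick for the trivial-path issue is a uniform version of the paper's case split on $|\Q|>1$ versus $\Q=\{q\}$. It relies on $\Post(q,i)\neq\emptyset$, which holds because $p(k,i)$ is a probability distribution for any $k\in q$.
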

        
        \begin{proof}[Proof of Proposition~\ref{Result: revelation in end-component}]
            Consider a revealing POMDP $\POMDP$ and its BS-MDP $\MDP_B$.
            Fix an end-component $\C=(\Q,\E)$ of $\MDP_B$, a belief-support $q\in \Q$, and a state $k\in q$.
            We first prove that
            \begin{align}
                \bigcup_{q\in \Q}q\subseteq \left\{k\in \K\colon\; \{k\}\in \Q\right\}.
                \label{equation first inclusion}
            \end{align}
    
            We have that there exist a belief-support $\another{q}\in \Q$ and an action $\another{i}\in \E(\another{q})$ such that $q\in \Post(\another{q},\another{i})$.
            Indeed, if $|\Q|>1$, then strong connectivity yields a path in $\Q$ ending at $q$.
            If $\Q=\{q\}$, then the claim follows from closedness and the nonemptiness of $\Post(q,i)$ for every $i\in\E(q)$.
                
            Since $q\in \Post(\another{q},\another{i})$, there exists $s\in \S$ such that $q=\psi(\another{q},\another{i},s)$.
            Since $k\in q$, the definition of $\psi$ gives that there exists $\another{k}\in \another{q}$ such that $P_{\another{k},k}(\another{i},s)>0$ and $\sum_{s'\in \S}P_{\another{k},k}(\another{i},s')>0$.
            Therefore, by the revealing property, there exists a signal $s^*\in \S$ such that
            $\psi(\another{q},\another{i},s^*) = \{k\}$ and thus $\{k\}\in \Post(\another{q},\another{i})$.
            Since $\C$ is closed and $\another{i}\in \E(\another{q})$, we get that $\{k\}\in \Q$.
            Since $q\in \Q$ and $k\in q$ were arbitrary, this proves \eqref{equation first inclusion}.
            The reverse inclusion holds trivially because $k\in \{k\}$ for every $k\in \K$ such that $\{k\}\in \Q$.
            Finally, since $\Q\neq \emptyset$ and every belief-support in $\beliefsupport$ is nonempty, there exist $q\in \Q$ and $k\in q$; hence, $\{k\}\in \Q$ and $\delta_k\in \D_\C$, which concludes the proof.
        \end{proof}
    
    \paragraph{Maximal end-components}
        Given two end-components $\C=(\Q,\E)$ and $\another{\C}=(\another{\Q},\another{\E})$ of the BS-MDP $\MDP_B$, write $\C\preceq\another{\C}$ if
        \[
            \Q\subseteq\another{\Q}
            \qquad\text{and}\qquad
            \E(q)\subseteq\another{\E}(q)
            \quad\text{for every }q\in\Q.
        \]
        An end-component $\C=(\Q,\E)$ is \emph{maximal} if there exists no end-component $\another{\C}=(\another{\Q},\another{\E})$ such that $\C\preceq\another{\C}$ and $\C\neq\another{\C}$.
        Denote the set of maximal end-components of $\MDP_B$ by $\mathfrak{M}$.\\
        
        The next lemma shows that maximal end-components are stable under revelations.
    
        \begin{Lemma}
            \label{Result: safe actions on singleton supports}
            Consider a revealing POMDP $\POMDP$ and its BS-MDP $\MDP_B$.
            Fix a maximal end-component $\C=(\Q,\E)\in\mathfrak{M}$.
            Then, for every $q\in\Q$, $k\in q$, and $i\in\E(q)$, we have that $\Post(\{k\},i)\subseteq\Q$ and $i\in\E(\{k\})$.
        \end{Lemma}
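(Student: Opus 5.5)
The plan is to exploit maximality. I will build an end-component $\C'=(\Q',\E')$ with $\C\preceq\C'$ that contains $\{k\}$ with action $i$ allowed there. Maximality then forces $\C'=\C$, and both conclusions follow. The natural candidate comes from the monotonicity of the support update: if $q''\subseteq r$, then $\psi(q'',j,t)\subseteq\psi(r,j,t)$ for every $j\in\I$ and $t\in\S$.

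Let $\mathcal{L}$ be the downward closure of $\Q$, i.e. the set of nonempty $q''\in\beliefsupport$ such that $q''\subseteq r$ for some $r\in\Q$. For $q''\in\mathcal{L}$, set
\[
\E'(q'')\defas\bigcup_{r\in\Q,\, r\supseteq q''}\E(r),
\]
which is nonempty and contains $\E(q'')$ whenever $q''\in\Q$.

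First I check that $\mathcal{L}$ is closed under $\E'$. If $j\in\E(r)$ with $q''\subseteq r\in\Q$, then each nonempty $\psi(q'',j,t)$ is contained in $\psi(r,j,t)$. The latter set is nonempty and lies in $\Q$ by closedness of $\C$, so $\psi(q'',j,t)\in\mathcal{L}$. Next I take $\Q'$ to be the set of states reachable from $\Q$ in the MDP restricted to $\mathcal{L}$ with actions $\E'$, and I restrict $\E'$ to $\Q'$. Closedness of $\C'$ is then immediate, since a reachable set inside a closed system is closed.

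The delicate step is strong connectivity, because newly added subsets need not lie in $\Q$ and must be able to return to $\Q$. Here I use the revealing property. Take $q''\in\Q'$, any $j\in\E'(q'')$, and some $k''\in q''$. Some transition $k''\to k'''$ under $j$ is feasible, so there is a signal $s^*$ with $\psi(q'',j,s^*)=\{k'''\}$. By the closedness just shown, this singleton lies in $\Q'$. Moreover, $k'''$ belongs to $\psi(r,j,s^*)\in\Q$ for the superset $r$ witnessing $j\in\E'(q'')$. By Proposition~\ref{Result: revelation in end-component}, $\{k'''\}\in\Q$.

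So every state of $\Q'$ reaches $\Q$ in one step. Within $\Q$ we can move freely by strong connectivity of $\C$, since $\E\subseteq\E'$ on $\Q$. Every state of $\Q'$ is reachable from $\Q$ by construction. Hence $\C'$ is strongly connected, and it is an end-component with $\C\preceq\C'$.

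By maximality, $\C'=\C$. Now $\{k\}\subseteq q\in\Q$, so $\{k\}\in\mathcal{L}$, and in fact $\{k\}\in\Q$ by Proposition~\ref{Result: revelation in end-component}. Therefore $i\in\E(q)\subseteq\E'(\{k\})=\E(\{k\})$. Closedness of $\C$ at $\{k\}$ then gives $\Post(\{k\},i)\subseteq\Q$. The main obstacle is the strong-connectivity check, which is exactly where the revealing property and Proposition~\ref{Result: revelation in end-component} are needed.
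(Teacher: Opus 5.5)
Your proof is correct and follows essentially the paper's route: extend $\C$ over the downward closure of $\Q$ using monotonicity of $\psi$, restrict to a reachable part, use the revealing property to return to singleton supports in $\Q$ for strong connectivity, and conclude by maximality. The only difference is cosmetic. You add all superset actions canonically and generate the reachable part from all of $\Q$. The paper instead adds just $i$ at $\{k\}$ and one chosen action $i_r$ per new support, and generates from $\{k\}$. As a side benefit, your version yields the slightly stronger fact that $\E(r)\subseteq\E(q'')$ whenever $q''\subseteq r$ both lie in $\Q$.
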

        
        \begin{proof}[Proof of Lemma~\ref{Result: safe actions on singleton supports}]
            Consider a revealing POMDP $\POMDP$ and its BS-MDP $\MDP_B$.
            Fix a maximal end-component $\C=(\Q,\E)\in\mathfrak{M}$, a belief-support $q\in\Q$, a state $k\in q$, and an action $i\in\E(q)$.
            By Proposition~\ref{Result: revelation in end-component}, we have that $\{k\}\in\Q$.
    
            Define the downward closure of $\Q$ by
            \[
                \overline{\Q}\defas\left\{r\in\beliefsupport\colon\;\exists q'\in\Q,\ r\subseteq q'\right\}.
            \]
            For every $r\in\overline{\Q}\setminus\Q$, fix a belief-support $q_r\in\Q$ such that $r\subseteq q_r$ and an action $i_r\in\E(q_r)$.
            Define the action correspondence $\overline{\E}$ on $\overline{\Q}$ by
            \[
                \overline{\E}(r)\defas
                \begin{cases}
                    \E(\{k\})\cup\{i\},
                        &\text{if }r=\{k\},\\
                    \E(r),
                        &\text{if }r\in\Q\setminus\{\{k\}\},\\
                    \{i_r\},
                        &\text{if }r\in\overline{\Q}\setminus\Q.
                \end{cases}
            \]
    
            We first show that, for every $r\in\overline{\Q}$ and every $\another{i}\in\overline{\E}(r)$,
            \begin{equation}
                \Post(r,\another{i})\subseteq\overline{\Q}.
                \label{equation: alternative proof downward closure}
            \end{equation}
            If $r\in\Q$ and $\another{i}\in\E(r)$, this follows from the closedness of $\C$.
            It remains to consider the added actions.
            First, fix $r'\in\Post(\{k\},i)$ and a signal $s\in\S$ such that $r'=\psi(\{k\},i,s)$.
            Since $\{k\}\subseteq q$, monotonicity of $\psi$ gives that $r'\subseteq\psi(q,i,s)$.
            Since $\Post(q,i)\subseteq\Q$, we have $r'\in\overline{\Q}$.
            Second, fix $r\in\overline{\Q}\setminus\Q$, $r'\in\Post(r,i_r)$, and a signal $s\in\S$ such that $r'=\psi(r,i_r,s)$.
            Since $r\subseteq q_r$, monotonicity of $\psi$ and closedness of $\C$ give that $r'\subseteq\psi(q_r,i_r,s)\in\Post(q_r,i_r)\subseteq\Q$.
            Hence, $r'\in\overline{\Q}$, which proves equation~\eqref{equation: alternative proof downward closure}.
    
            Let $\R$ be the set of belief-supports reachable from $\{k\}$ using actions prescribed by $\overline{\E}$ and let $\E_{\R}$ be the restriction of $\overline{\E}$ to $\R$.
            Since $\{k\}\in\Q$ and $\C$ is strongly connected, every belief-support in $\Q$ is reachable from $\{k\}$ using actions prescribed by $\E$.
            Moreover, the definition of $\overline{\E}$ gives that $\E(r)\subseteq\overline{\E}(r)$ for every $r\in\Q$.
            Therefore, $\Q\subseteq\R$, and hence $\E(r)\subseteq\E_{\R}(r)$ for every $r\in\Q$.
    
            We show that $(\R,\E_{\R})$ is an end-component of $\MDP_B$.
            First, it is closed.
            Indeed, if $r\in\R$, $\another{i}\in\E_{\R}(r)$, and $r'\in\Post(r,\another{i})$, then $r'\in\overline{\Q}$ by \eqref{equation: alternative proof downward closure} and $r'$ is reachable from $\{k\}$ by the definition of $\R$.
            Therefore, $r'\in\R$.
    
            We next show that $(\R,\E_{\R})$ is strongly connected.
            By definition of $\R$, every belief-support $r\in\R$ is reachable from $\{k\}$.
            Conversely, fix $r\in\R$.
            If $r\in\Q$, then $r$ can reach $\{k\}$ using actions prescribed by $\E_{\R}$ because $\C$ is strongly connected and $\E(r')\subseteq\E_{\R}(r')$ for every $r'\in\Q$.
            Suppose that $r\in\R\setminus\Q$.
            Fix a state $\another{k}\in r$.
            By the revealing property, there exist $k'\in\K$ and $s^*\in\S$ such that $P_{\another{k},k'}(i_r,s^*)>0$ and $s^*$ reveals $k'$.
            Since $\another{k}\in r\subseteq q_r$, we obtain $\psi(r,i_r,s^*)=\psi(q_r,i_r,s^*)=\{k'\}$ and thus, $\{k'\}\in\Post(q_r,i_r)\subseteq\Q$.
            Since $i_r\in\E_{\R}(r)$, the belief-support $r$ has an edge to $\{k'\}\in\Q$, which can reach $\{k\}$ using actions prescribed by $\E_{\R}$ because $\C$ is strongly connected and $\E(r')\subseteq\E_{\R}(r')$ for every $r'\in\Q$.
            Therefore, every belief-support in $\R$ can reach $\{k\}$, which proves that $(\R,\E_{\R})$ is strongly connected.
    
            Therefore, $\C\preceq(\R,\E_{\R})$.
            Since $(\R,\E_{\R})$ is an end-component and $\C$ is maximal, we have that $\R=\Q$ and $\E_{\R}(r)=\E(r)$ for every $r\in \Q$.
            Since $i\in\overline{\E}(\{k\})$ and $\{k\}\in\R$, the definition of $\E_{\R}$ gives that $i\in\E_{\R}(\{k\})=\E(\{k\})$.
            Moreover, the definition of $\R$ gives that $\Post(\{k\},i)\subseteq\R=\Q$.
            Hence, $i\in\E(\{k\})$ and $\Post(\{k\},i)\subseteq\Q$, which concludes the proof.
        \end{proof}
    
        The next lemma shows that, in a maximal end-component $\C=(\Q,\E)$, any two singleton belief-supports of $\Q$ are connected by a path of length at most $|\K|-1$ that visits only singleton belief-supports of $\Q$ and uses only actions prescribed by $\E$.
        
        \begin{Lemma}
            \label{Result: short paths between singleton supports}
            Consider a revealing POMDP $\POMDP$ and its BS-MDP $\MDP_B$.
            Fix $\C=(\Q,\E)\in\mathfrak{M}$ and states $k,k'\in\K$ such that $\{k\},\{k'\}\in\Q$.
            Then, there exist $r\leq|\K|-1$, $k_0=k,k_1,\ldots,k_r=k'$ with $\{k_m\}\in\Q$ for every $m\in[0\until r]$, $i_0,\ldots,i_{r-1}\in\I$, and $s_1,\ldots,s_r\in\S$ such that, for every $m\in[0\until r-1]$, $i_m\in\E(\{k_m\})$ and $\psi(\{k_m\},i_m,s_{m+1})=\{k_{m+1}\}$.
            In particular, $P_{k_m,k_{m+1}}(i_m,s_{m+1})>0$ for every $m\in[0\until r-1]$.
        \end{Lemma}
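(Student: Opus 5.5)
The plan is to work in the directed graph $G$ whose vertices are the states $\{k''\in\K\colon \{k''\}\in\Q\}$. We put an edge $k_a\to k_b$ whenever there exist an action $i\in\E(\{k_a\})$ and a signal $s$ with $\psi(\{k_a\},i,s)=\{k_b\}$. It suffices to show that $k'$ is reachable from $k$ in $G$. Then a shortest path has at most $|\K|$ distinct vertices, hence length $r\le|\K|-1$. The final claim $P_{k_m,k_{m+1}}(i_m,s_{m+1})>0$ is immediate from $\psi(\{k_m\},i_m,s_{m+1})=\{k_{m+1}\}\neq\emptyset$.

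To show reachability, use strong connectivity of $\C$ to get a path in $\MDP_B$ from $\{k\}$ to $\{k'\}$:
\[
q_0=\{k\},q_1,\ldots,q_n=\{k'\},
\]
with actions $j_m\in\E(q_m)$ and signals $t_{m+1}$ such that $q_{m+1}=\psi(q_m,j_m,t_{m+1})$. Trace this path backwards through individual states. Set $\ell_n\defas k'$. Given $\ell_{m+1}\in q_{m+1}$, the definition of $\psi$ gives $\ell_m\in q_m$ with $P_{\ell_m,\ell_{m+1}}(j_m,t_{m+1})>0$. Since $q_0=\{k\}$, this backward choice forces $\ell_0=k$. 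This yields a state sequence $k=\ell_0,\ell_1,\ldots,\ell_n=k'$ with $\ell_m\in q_m$ and positive one-step probabilities.

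Next, convert each step $\ell_m\to\ell_{m+1}$ into an edge of $G$:
\begin{itemize}
\item Proposition~\ref{Result: revelation in end-component} gives $\{\ell_m\}\in\Q$.
\item Lemma~\ref{Result: safe actions on singleton supports}, applied to $q_m\ni\ell_m$ and $j_m\in\E(q_m)$, gives $j_m\in\E(\{\ell_m\})$.
\item Since $\sum_sP_{\ell_m,\ell_{m+1}}(j_m,s)>0$, the revealing property yields $s^*$ with $\psi(\{\ell_m\},j_m,s^*)=\{\ell_{m+1}\}$. This holds because $s^*$ can only be emitted when the successor is $\ell_{m+1}$, and it is emitted from $\ell_m$.
\end{itemize}
Hence $\ell_m\to\ell_{m+1}$ is an edge of $G$. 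Concatenating these edges gives a walk from $k$ to $k'$. Removing cycles yields the required short path, with all intermediate singletons in $\Q$ and all actions in $\E$.

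The main subtlety, rather than a real obstacle, is the backward-tracing step. It ensures the state-level sequence starts at $k$ and ends at $k'$, while keeping the actions in $\E(q_m)$ so that Lemma~\ref{Result: safe actions on singleton supports} transfers them to singleton supports. If $k=k'$, take $r=0$.
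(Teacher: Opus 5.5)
Your proposal is correct and follows essentially the same route as the paper: a strong-connectivity path in the BS-MDP traced backwards to a state-level sequence from $k$ to $k'$; then Lemma~\ref{Result: safe actions on singleton supports} and the revealing property turn each step into a singleton-to-singleton transition with an action in $\E$; finally cycle removal gives $r\le|\K|-1$. Your explicit appeal to Proposition~\ref{Result: revelation in end-component} for $\{\ell_m\}\in\Q$ fills in a step that the paper leaves implicit.
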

        
        \begin{proof}[Proof of Lemma~\ref{Result: short paths between singleton supports}]
            Consider a revealing POMDP $\POMDP$ and its BS-MDP $\MDP_B$.
            Fix a maximal end-component $\C=(\Q,\E)\in\mathfrak{M}$ and define $\Q_{\mathrm{sing}}\defas\left\{\{k\}\in\Q\;\middle|\;k\in\K\right\}$.
            Fix $k,k'\in\K$ such that $\{k\},\{k'\}\in\Q_{\mathrm{sing}}$.
            Because $\C$ is strongly connected, there exists an integer $\ell\in\NN$, supports
            \[
                q_0=\{k\},q_1,\ldots,q_\ell=\{k'\},
            \]
            actions $i_0,\ldots,i_{\ell-1}\in\I$, and signals $\another{s}_1,\ldots,\another{s}_\ell\in\S$ such that, for every $m\in[0\until\ell-1]$,
            \[
                i_m\in\E(q_m)
                \qquad\text{and}\qquad
                q_{m+1}=\psi(q_m,i_m,\another{s}_{m+1}).
            \]
        
            We construct states $k_\ell,k_{\ell-1},\ldots,k_0$ backwards such that $k_\ell=k'$ and, for every $m\in[0\until\ell]$, $k_m\in q_m$ and, if $m\le \ell-1$, $P_{k_m,k_{m+1}}(i_m,\another{s}_{m+1})>0$.
            We initialize the construction by setting $k_\ell\defas k'$ with $q_\ell=\{k'\}$.
            For the recursive step, fix $m\in[0\until \ell-1]$ and suppose that the states $k_\ell,\ldots,k_{m+1}$ have already been constructed, so that $k_{m+1}\in q_{m+1}$.
            Since $q_{m+1}=\psi(q_m,i_m,\another{s}_{m+1})$, the definition of $\psi$ gives a state $k_m\in q_m$ such that $P_{k_m,k_{m+1}}(i_m,\another{s}_{m+1})>0$, and we fix such a state $k_m$.
            After $\ell$ steps, the construction produces a state $k_0\in q_0$.
            Since $q_0=\{k\}$, we get that $k_0=k$.
        
            Moreover, by Lemma~\ref{Result: safe actions on singleton supports}, we have that, for every $m\in[0\until\ell-1]$, $i_m\in\E(\{k_m\})$ and $\sum_{s\in\S}P_{k_m,k_{m+1}}(i_m,s)>0$.
            Because $\POMDP$ is revealing, there exists a signal $s_{m+1}\in\S$ such that $P_{k_m,k_{m+1}}(i_m,s_{m+1})>0$ and $\psi(\{k_m\},i_m,s_{m+1})=\{k_{m+1}\}$.
            Therefore, we have constructed a path from $\{k\}$ to $\{k'\}$ in $\Q_{\mathrm{sing}}$.
        
            If this path visits a singleton support more than once, we delete the portion between two consecutive occurrences of the same singleton support.
            Repeating this operation yields a path whose singleton supports are pairwise distinct.
            Therefore, we deduce that $r\leq|\Q_{\mathrm{sing}}|-1\leq|\K|-1$, which concludes the proof.
        \end{proof}
    
    \subsubsection{Approximating the Maximum Safe Value}
    
        This subsection introduces the strategies that keep the belief-support inside a fixed end-component of the BS-MDP, called $\C$-safe strategies, and the maximum safe value.
        Using an asynchronous coupling, we prove for maximal end-components that the finite-horizon values of such strategies converge, at an explicit rate, to a long-run average value that depends on the end-component alone, and we deduce that the maximum safe value can be approximated in $\mathrm{EXPTIME}$.
    
        \paragraph{$\C$-Safe strategy}
            Consider a POMDP $\POMDP$ and its BS-MDP $\MDP_B$ with end-component $\C=(\Q,\E)$.
            The set of $\C$-safe beliefs is defined by
            \[
                \Delta_\C\defas\left\{b\in \Delta(\K)\colon\; \supp(b)\in \Q\right\}.
            \]
            Given an initial belief and a strategy, denote the support of the belief process at stage $m\in \NN^*$ by $Q_m\defas\supp(B_m)$.
            A strategy $\sigma$ is called  $\C$\emph{-safe} from $b_1 \in \Delta_\C$ if
            \[
                \PP_\sigma^{b_1}\left(\forall m \in \NN^*,\; Q_m\in \Q \text{ and } I_m\in \E(Q_m)\right) = 1.
            \]
            The set of $\C$-safe strategies from $b_1 \in \Delta_\C$, denoted by $\Sigma_\C(b_1)$, is defined by
            \[
                \Sigma_\C(b_1)\defas\left\{\sigma \in \Sigma\colon\;    \PP_\sigma^{b_1}\left(\forall m \in \NN^*,\; Q_m\in \Q \text{ and } I_m\in \E(Q_m)\right) = 1\right\}.
            \]
    
        \paragraph{$\C$-Safe objectives}
            Consider a POMDP $\POMDP$ and its BS-MDP $\MDP_B$.
            Consider an end-component $\C=(\Q,\E)$ of $\MDP_B$ and an initial belief $b_1\in \Delta_\C$.
            \begin{itemize}
                \item 
                    The $\C$-safe $n$-stage (with $n\in \NN^*$) objective given by the $\C$-safe strategy $\sigma\in \Sigma_\C(b_1)$ is defined by
                    \[
                        \gamma_{n,\C}(b_1,\sigma)\defas \EE_\sigma^{b_1}\left(\dfrac{1}{n}\sum_{m=1}^nG_m\right).
                    \]
                    The $\C$-safe $n$-stage value is defined by $v_{n,\C}(b_1)\defas\sup\limits_{\sigma\in \Sigma_\C(b_1)}\gamma_{n,\C}(b_1,\sigma)$.
                    
                \item 
                    The $\C$-safe long-run average objective given by the $\C$-safe strategy $\sigma\in \Sigma_\C(b_1)$ is defined by
                    \[
                        \gamma_\C(b_1,\sigma)\defas \liminf_{n\to\infty}\EE_\sigma^{b_1}\left(\dfrac{1}{n}\sum_{m=1}^n G_m\right).
                    \]
                    The $\C$-safe value is defined by $v_\C(b_1)\defas\sup\limits_{\sigma\in \Sigma_\C(b_1)}\gamma_\C(b_1,\sigma)$.
            \end{itemize}
            
        \paragraph{Maximum safe value}
            \noindent Given $k\in \K$, define the maximum safe value by
            \[
                v_{\safe}(k)\defas\max\left\{v_\C(\delta_k)\colon \C=(\Q,\E)\in \mathfrak{C} \text{ and } \{k\}\in \Q\right\}.
            \]
        
        \paragraph{Main Result}    
            The main result of this section is the following theorem.
            \begin{Theorem}
                \label{Theorem: approximation of max safe value is EXPTIME}
                Approximating the maximum safe value is in EXPTIME.
            \end{Theorem}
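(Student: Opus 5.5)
The plan is to reduce the computation of $v_{\safe}(k)$ to finitely many finite-horizon $\C$-safe values over maximal end-components, and to bound the error using an explicit convergence rate obtained by asynchronous coupling, as outlined in the introduction.

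\textbf{Step 1: restrict to maximal end-components.} Every end-component containing $\{k\}$ is $\preceq$ some maximal end-component $\C'$. A $\C$-safe strategy is also $\C'$-safe, so $v_\C(\delta_k)\le v_{\C'}(\delta_k)$. Hence
\[
v_{\safe}(k)=\max\left\{v_\C(\delta_k)\colon\; \C\in\mathfrak{M},\ \{k\}\in\Q\right\}.
\]
The BS-MDP has at most $2^{|\K|}$ states, so its maximal end-components can be computed in exponential time by the standard MEC decomposition algorithm.

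\textbf{Step 2: explicit convergence rate inside a fixed $\C\in\mathfrak{M}$ (the main obstacle).} The goal is a bound
\[
\sup_{b,b'\in\Delta_\C}\bigl|v_{n,\C}(b)-v_{n,\C}(b')\bigr|\le \frac{c\,T}{n},
\qquad T=|\K|\,p_{\min}^{-O(|\K|)}\cdot(\text{poly factor}),
\]
from which $|v_{n,\C}-v_\C|\le O(T/n)$ follows.

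The coupling runs as follows. Fix $\sigma\in\Sigma_\C(b)$ and run it from $b$. From any belief in $\Delta_\C$, using any action of $\E$, the revealing property (as in the proofs of Proposition~\ref{Result: revelation in end-component} and Lemma~\ref{Result: safe actions on singleton supports}) gives probability at least $p_{\min}$ of a revealing signal at each stage. So a state is revealed within $t$ stages except with probability at most $(1-p_{\min})^t$.

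Meanwhile, the copy started at $b'$ plays some $\E$-action until it is itself revealed at some $\{k'\}$. It then follows the singleton path of Lemma~\ref{Result: short paths between singleton supports}, of length at most $|\K|-1$ and using actions in $\E$, toward the state $k''$ revealed in the first copy. This path succeeds with probability at least $p_{\min}^{|\K|-1}$; on failure, the copy retries. By Lemma~\ref{Result: safe actions on singleton supports}, after any revelation the continuation strategy $\sigma[h]$ is still $\C$-safe from $\delta_{k''}$.

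Once the second copy sits at $\delta_{k''}$, it copies $\sigma[h]$, shifted in time. The delay $D$ therefore has a geometric tail with success parameter roughly $p_{\min}^{|\K|}$, and the reward difference over $n$ stages is at most $\EE[D]/n$, plus the stages before the first revelation.

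The delicate point is that this is asynchronous: the second copy reaches $k''$ later, so it collects the same reward stream shifted by $D$. This costs only the truncation of $D$ rewards at the horizon. To get $v_{n,\C}$ itself close to its limit, I would then use a block argument: concatenate near-optimal $m$-stage $\C$-safe strategies, resynchronising by revelation between blocks. This gives $v_{km,\C}\ge v_{m,\C}-O(T/m)$. Combined with subadditivity-type bounds, it yields the convergence of $v_{n,\C}$ to a limit independent of the initial belief, at rate $O(T/n)$.

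\textbf{Step 3: algorithm.} Choose $n=\lceil cT/\eps\rceil$, which is exponential in the input size. Compute $v_{n,\C}(\delta_k)$ for every $\C\in\mathfrak{M}$ and $k$ with $\{k\}\in\Q$. The value $v_{n,\C}$ is an $n$-horizon optimization over beliefs, which is not finite. However, beliefs reached in $n$ stages come from histories of length $n$, and enumerating these is doubly exponential, which is too slow.

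Instead, I would exploit the revelation structure. Values restart at Dirac beliefs, so it suffices to compute values for beliefs reached after $t$ non-revealing stages from a Dirac belief, with $t\le t_0=O(\log(1/\eps)/p_{\min})$, truncating beyond $t_0$ at error $\eps$. The number of such histories is $|\I\times\S|^{t_0}$, which is exponential. A dynamic program over (Dirac state, remaining horizon) combined with these bounded non-revealing segments then runs in exponential time, provided $\log(1/\eps)$ and $1/p_{\min}$ are suitably encoded.

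I expect Step 2's uniform explicit rate to be the main difficulty. Step 3's running-time bookkeeping is the second most delicate point.
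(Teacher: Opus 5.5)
Your Steps 1 and 2 essentially follow the paper's route: restriction to maximal end-components, then the asynchronous coupling with reset paths of length at most $|\K|-1$ from Lemma~\ref{Result: short paths between singleton supports}, then a block argument that turns the uniform bound on $|v_{n,\C}(b)-v_{n,\C}(b')|$ into a bound on $|v_{n,\C}-v_\C|$ at an explicit horizon. The one point you leave implicit is that the second copy cannot see the state revealed in the first copy. Its strategy must therefore be the mixture of the reset strategies over the law of the first revealed history, as in the paper's construction of $\sigma'$.

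The genuine gap is in Step 3. Your enumeration of non-revealing segments costs $(|\I||\S|)^{t_0}$ per pair (Dirac state, remaining horizon), and $t_0$ cannot be taken below order $1/p_{\min}$. If revealing signals have probability exactly $p_{\min}$, a segment typically lasts about $1/p_{\min}$ stages, and the beliefs along it can all be distinct. With transition probabilities encoded in binary, $p_{\min}$ can be $2^{-L}$, where $L$ is the bit length of the input. Then $t_0$ is exponential and your algorithm is doubly exponential. Your caveat ``provided $1/p_{\min}$ is suitably encoded'' is exactly an assumption the theorem does not make, so Step 3 proves the result only when $1/p_{\min}$ is polynomially bounded. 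Even in that case, $t_0=O(\log(1/\eps)/p_{\min})$ is too small. Up to $n$ segments can start before the horizon, so each must be truncated with probability at most $\eps/n$, which requires $t_0=O(\log(n/\eps)/p_{\min})$. This correction is harmless because $\log n$ is polynomial.

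The missing idea is to discretize beliefs rather than histories. Because $\Sigma_\C(b)$ depends only on $\supp(b)$, the value $v_{n,\C}$ is $1$-Lipschitz for $\|\cdot\|_1$ among beliefs with a common support. The paper therefore runs the Bellman backups on a support-preserving grid $\G_\ell$ with $\ell=\lceil 2(n+1)|\K|/\eta\rceil$, rounding each posterior $\Phi(b,i,s)$ to a closest grid point with the same support. This rounding costs at most $\eta/(n+1)$ per stage, hence $\eta$ in total. It also keeps the scheme inside $\Delta_\C$ with actions from $\E$ only; support preservation is what makes the Lipschitz bound applicable. There are at most $2^{|\K|}\ell^{|\K|}$ grid points. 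Since $n$ is only exponential, $\log\ell=O(|\K|\log(1/p_{\min})+\log(|\K|/\eta))$ is polynomial in the input. The resulting running time is $2^{O(|\K|^2\log(1/p_{\min})+|\K|\log(|\K|/\eps))}|\I||\S|$. Only $\log(1/p_{\min})$, not $1/p_{\min}$, enters the exponent, which is what makes the bound exponential under binary encoding.
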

        
            In the next lemma, we first show that the $\C$-safe $n$-stage values from two initial beliefs in $\Delta_\C$ become uniformly close as the horizon grows.
            
            \begin{Lemma}
                \label{Result: n-stage Safe value}
                Consider a revealing POMDP $\POMDP$ and its BS-MDP $\MDP_B$.
                Fix a maximal end-component $\C=(\Q,\E)\in \mathfrak{M}$ of $\MDP_B$ and $\eps\in (0,1)$, and define the horizon
                \begin{equation}
                    n_\eps\defas\left\lceil\dfrac{6}{\eps}\left(1+\left(\left\lceil\dfrac{\log(3/\eps)}{p_{\min}}\right\rceil+|\K|\right)\left\lceil\dfrac{3\log(3/\eps)}{2p_{\min}^{|\K|}}\right\rceil\right)\right\rceil.
                    \label{equation: explicit horizon}
                \end{equation}
                Then, for every $n\geq n_\eps$ and pair of initial beliefs $b_1,b_1'\in \Delta_\C$,
                \begin{equation}
                    \left|v_{n,\C}(b_1)-v_{n,\C}(b_1')\right|\le \eps.
                    \label{Equation: n-stage safe value}
                \end{equation}
            \end{Lemma}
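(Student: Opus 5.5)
By symmetry it suffices to show $v_{n,\C}(b_1')\ge v_{n,\C}(b_1)-\eps$ for all $b_1,b_1'\in\Delta_\C$ and $n\ge n_\eps$. The plan is to fix an $\eps/6$-optimal $\C$-safe strategy $\sigma$ from $b_1$ for the $n$-stage objective and build from it a $\C$-safe strategy $\sigma'$ from $b_1'$ that loses at most $\eps$ in $n$-stage average. The construction follows the asynchronous coupling outlined in the introduction, applied blockwise.

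\textbf{Step 1: revelation times.} From any belief in $\Delta_\C$ under any $\C$-safe strategy, each stage reveals the successor state with probability at least $p_{\min}$. By the revealing property, for the current state $k$ and chosen action $i$ some successor $k'$ is revealed with probability at least $p_{\min}$, and by closedness the revealed singleton $\{k'\}$ lies in $\Q$. Hence the first revelation time $T$ satisfies $\PP(T>t)\le(1-p_{\min})^t\le e^{-p_{\min}t}$. Taking $t_1\defas\lceil\log(3/\eps)/p_{\min}\rceil$ makes $\PP(T>t_1)\le\eps/3$. This holds for both copies.

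\textbf{Step 2: asynchronous coupling within one attempt.} Run the first copy under $\sigma$ from $b_1$ until its revelation time $T$, and let $k^*$ be the revealed state. The second copy plays any $\E$-prescribed actions from $b_1'$ until its own revelation time $T'$, revealing some $k''$. By Lemma~\ref{Result: short paths between singleton supports}, there is a path of at most $|\K|-1$ steps through singleton supports of $\Q$, using actions in $\E$, from $\{k''\}$ to $\{k^*\}$. Each step follows the path's signal with probability at least $p_{\min}$, so the whole path succeeds with probability at least $p_{\min}^{|\K|}$ within $|\K|$ stages. By Lemma~\ref{Result: safe actions on singleton supports}, these actions are $\C$-safe from the revealed singletons. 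If the path is followed, the second copy reaches belief $\delta_{k^*}$, which is exactly the first copy's belief at time $T$. From then on $\sigma'$ plays the continuation $\sigma[H_T]$ of the first copy, and the remaining rewards have the same law, shifted in time.

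If the path deviates, the second copy is still at a belief in $\Delta_\C$. The first copy, however, has already moved on, so the coupling cannot simply be retried at the same point.

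\textbf{Step 3: blocks and repeated attempts (main obstacle).} To allow repeated attempts, I would make the statement uniform in the target rather than in a fixed realization. Write $L\defas t_1+|\K|$ for the length of one attempt. Define $\sigma'$ so that it repeatedly waits for a revelation and then tries to steer to the state that the $\sigma$-copy revealed. I expect the cleanest formulation is to couple with a delayed copy of $\sigma$: the first copy is frozen at its first revelation $(T,k^*)$, and the second copy keeps making attempts of length at most $L$ to reach $\delta_{k^*}$, each succeeding with probability at least $p_{\min}^{|\K|}$ independently of the past. After $N\defas\lceil 3\log(3/\eps)/(2p_{\min}^{|\K|})\rceil$ attempts, the failure probability is at most $(1-p_{\min}^{|\K|})^N\le\eps/3$. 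Together with Step 1, synchronization happens within $D\defas 1+LN$ stages except with probability at most $2\eps/3$ (up to the constants in the statement). Note that $D$ is precisely the inner expression of $n_\eps$.

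\textbf{Step 4: accounting.} After synchronization, the rewards of the second copy over horizon $n$ equal those of the first copy shifted by a delay of at most $D$. Two kinds of stages can differ: the first $D$ stages, and the last $D$ stages of the first copy, which the second copy never reaches within horizon $n$. Rewards lie in $[0,1]$, so these contribute at most $2D/n$, plus at most the failure probability on the bad event. This gives $\gamma_{n,\C}(b_1',\sigma')\ge\gamma_{n,\C}(b_1,\sigma)-2D/n-2\eps/3$, and the choice $n\ge n_\eps=\lceil 6D/\eps\rceil$ makes $2D/n\le\eps/3$. The delicate points are keeping $\sigma'$ $\C$-safe throughout, which follows from Lemma~\ref{Result: safe actions on singleton supports} and closedness, and making the attempt-independence argument rigorous via the strong Markov property on beliefs.
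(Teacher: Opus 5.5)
Your architecture matches the paper's proof. The first copy runs an arbitrary $\C$-safe $\sigma$ until its first revelation. The second copy runs trials of length $L=\lceil\log(3/\eps)/p_{\min}\rceil+|\K|$: it waits for a revelation, then follows a singleton path from Lemma~\ref{Result: short paths between singleton supports}. There are $N$ trials, the synchronization horizon is $D=1+LN$, and the final accounting is $2D/n+2\eps/3\le\eps$.

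There is, however, one genuine gap. As you describe it, the second copy ``tries to steer to the state that the $\sigma$-copy revealed'' and then ``plays the continuation $\sigma[H_T]$''. Both $k^*$ and $H_T$ are functions of the first copy's history, which the controller of $\POMDP(b_1')$ never observes. So your $\sigma'$ is not an element of $\Sigma_\C(b_1')$. The step $\gamma_{n,\C}(b_1',\sigma')\le v_{n,\C}(b_1')$ that closes your argument is therefore unjustified. Freezing the first copy is the right intuition, but it must be made into private randomization of the second controller.

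The paper does this as follows. For each realization $h_{\rev}$ of the first revealed history, with law $\mu(h_{\rev})=\PP_\sigma^{b_1}(H_{\rev}=h_{\rev})$, it builds a genuine strategy $\sigma_{h_{\rev}}\in\Sigma_\C(b_1')$. This strategy targets $\delta_{k_{\rev}}$ and, once there, plays $\sigma[h_{\rev}]$. It then defines $\sigma'$ as the behavioural strategy mixing the $\sigma_{h_{\rev}}$ with weights $\mu$. It proves the mixture identity $\PP_{\sigma'}^{b_1'}(H_m=h_m)=\sum_{h_{\rev}}\mu(h_{\rev})\PP_{\sigma_{h_{\rev}}}^{b_1'}(H_m=h_m)$, so the second marginal of the coupling is the law induced by an actual strategy.

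A shortcut also works. The coupling directly gives $\sum_{h_{\rev}}\mu(h_{\rev})\gamma_{n,\C}(b_1',\sigma_{h_{\rev}})\ge\gamma_{n,\C}(b_1,\sigma)-\eps$. The left side is at most $v_{n,\C}(b_1')$ because each $\sigma_{h_{\rev}}$ is $\C$-safe.

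There are also two smaller points.
\begin{itemize}
\item Each trial contains a waiting phase that fails with probability up to $\eps/3$. The per-trial success probability you can guarantee, conditionally on the past rather than independently of it, is therefore $(1-\eps/3)p_{\min}^{|\K|}\ge\tfrac23p_{\min}^{|\K|}$, not $p_{\min}^{|\K|}$. The factor $3/2$ in $N$ is there precisely to absorb this: it still gives $(1-\tfrac23p_{\min}^{|\K|})^N\le\eps/3$.
\item Starting from an $\eps/6$-optimal $\sigma$ adds $\eps/6$ to the coupling loss of $\eps$, which overshoots the claimed bound. Since the coupling works for every $\sigma\in\Sigma_\C(b_1)$, take $\sigma$ arbitrary and pass to the supremum at the end, as the paper does.
\end{itemize}
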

    
            \paragraph{Proof overview of Lemma~\ref{Result: n-stage Safe value}}
                The two initial beliefs are compared through an asynchronous coupling of two copies of $\POMDP$.
                Starting from $b_1\in \Delta_\C$, the first copy is played according to an arbitrary $\C$-safe strategy.
                Starting from $b_1'\in \Delta_\C$, the second copy is played according to a $\C$-safe strategy that we construct.
                Once a state is revealed in each copy, the second copy follows a path inside $\C$ to reach the state revealed in the first copy.
                From that stage on, both copies use the same continuation strategy and obtain the same rewards.
                Therefore, only the stages before this synchronization contribute to the difference of the two $n$-stage values.
                Since the synchronization occurs before an explicit stage with high probability, this contribution vanishes at an explicit rate.
                Accordingly, the proof first establishes the revelation property and a uniform lower bound on the probability of reaching a given revealed belief within an explicit number of stages, then constructs the coupling strategy from $b_1'$ and the coupling itself, and finally compares the two $n$-stage values.
    
            \begin{proof}[Proof of Lemma~\ref{Result: n-stage Safe value}]
            
                Consider a revealing POMDP $\POMDP$ and its BS-MDP $\MDP_B$.
                Fix a maximal end-component $\C=(\Q,\E)\in \mathfrak{M}$ of $\MDP_B$ and $\eps\in (0,1)$.
                Fix an arbitrary pair of initial beliefs $b_1, b_1'\in \Delta_\C$ and a $\C$-safe strategy $\sigma\in \Sigma_\C(b_1)$.
            
                \vspace{1em}
                \smallskip\noindent\textit{Revelation property.}
                    Since $\sigma$ is $\C$-safe, we have that the support of the belief process remains inside $\C$, i.e., $Q_m\in \Q$ and $I_m\in \E(Q_m)$ for every $m\in \NN^*$.
                    Fix a stage $m\in \NN^*$ and an admissible history $h_m\in \H_m(b_1)$ with $\PP_\sigma^{b_1}(H_m=h_m)>0$.
                    Consider an action $i\in \I$ such that $\sigma(i\given h_m)>0$.
                    Since $\sigma$ is $\C$-safe, we have $i\in \E\left(\supp(b_{h_m}^{b_1})\right)$.
                    Fix a state $k\in \supp\left(b_{h_m}^{b_1}\right)$.
                    Since $p(k,i)\in \Delta(\K\times \S)$, there exists a successor state $k'\in \K$ such that $\sum_{s\in \S}P_{k,k'}(i,s)>0$.
                    Since $\POMDP$ is revealing, there exists at least one signal $s^*\in \S$ that reveals $k'$, i.e.,
                    \[
                        P_{k,k'}(i,s^*)\ge p_{\min}>0
                        \qquad\text{and}\qquad
                        P_{\underline{k},\overline{k}}(i,s^*)=0
                            \quad\text{for every }\underline{k}\in \K\text{ and }\overline{k}\in \K\setminus\{k'\}.
                    \]
                    Therefore, we have $\psi\left(\supp(b_{h_m}^{b_1}),i,s^*\right)=\{k'\}$.
                    Since $i\in \E\left(\supp(b_{h_m}^{b_1})\right)$ and $\C$ is closed, we get that $\{k'\}\in \Q$ and $\delta_{k'}\in \D_\C$.
                    Consequently, on the event $\{K_m=k,H_m=h_m\}$, observing the signal $s^*$ at stage $m$ leads to the revealed belief $\delta_{k'}$, and thus
                    \[
                        \PP_\sigma^{b_1}\left(B_{m+1}\in \D_\C\givenm K_m=k, H_m=h_m,I_m=i\right)\ge P_{k,k'}(i,s^*)\ge p_{\min}.
                    \]
                    Averaging over the actions prescribed by $\sigma$ and over the states of the support of $b_{h_m}^{b_1}$, we deduce that, for every $b_1\in \Delta_\C$ and admissible history $h_m\in \H_m(b_1)$ with $m\in \NN^*$,
                    \begin{align}
                        \PP_\sigma^{b_1}\left(B_{m+1}\in \D_\C\givenm H_m=h_m\right)
                            &\geq \sum_{i\in \I}\sigma(i\given h_m)\sum_{k\in \K}b_{h_m}^{b_1}(k)p_{\min}
                            =p_{\min}.
                        \label{equation: alternative one-step revelation}
                    \end{align}
                    Since inequality \eqref{equation: alternative one-step revelation} holds for every initial belief in $\Delta_\C$, every $\C$-safe strategy from that belief, and every admissible history, we obtain, by induction on $n\in \NN$, that, for every stage $m\in \NN^*$,
                    \begin{align}
                        \PP_\sigma^{b_1}\left(\bigcap_{j=1}^{n}\{B_{m+j}\notin \D_\C\}\givenm H_m\right)\le (1-p_{\min})^{n}.
                        \label{equation: alternative geometric revelation}
                    \end{align}
                    Indeed, for the base case $n=0$, the intersection over the empty index set is $\Plays$, and thus both sides of \eqref{equation: alternative geometric revelation} are equal to $1$.
                    Assume now that \eqref{equation: alternative geometric revelation} holds for some $n\in \NN$.
                    We have that
                    \begin{align*}
                        &\PP_\sigma^{b_1}\left(\bigcap_{j=1}^{n+1}\{B_{m+j}\notin \D_\C\}\givenm H_m\right)\\
                            &\qquad = \EE_\sigma^{b_1}\left(\mathbf{1}_{\bigcap_{j=1}^{n}\{B_{m+j}\notin \D_\C\}}\PP_\sigma^{b_1}\left(B_{m+n+1}\notin \D_\C\givenm H_{m+n}\right)\givenm H_m\right)
                                &&\text{(tower rule)}\\
                            &\qquad \le (1-p_{\min})\PP_\sigma^{b_1}\left(\bigcap_{j=1}^{n}\{B_{m+j}\notin \D_\C\}\givenm H_m\right)
                                &&\text{(by Eq.~\eqref{equation: alternative one-step revelation})}\\
                            &\qquad \le (1-p_{\min})^{n+1},
                                &&\text{(induction hyp.)}
                    \end{align*}
                    which proves inequality \eqref{equation: alternative geometric revelation}.
            
                    Define 
                    \begin{equation}
                        \ell_\eps\defas
                            \begin{cases}
                                1,
                                    &\text{if } p_{\min} = 1,\\
                                \left\lceil\dfrac{\log(\eps/3)}{\log(1-p_{\min})}\right\rceil,
                                    &\text{if } 0 < p_{\min} < 1.
                            \end{cases}
                            \label{equation: alternative waiting length}
                    \end{equation}
                    Then, $(1-p_{\min})^{\ell_\eps}\le \eps/3$.
                    Define the first revelation time by
                    \[
                        T_{\rev}\defas\inf\{m\in \NN^*\colon\; B_m\in \D_\C\}.
                    \]
                    Since $\{T_{\rev}>1+n\}\subseteq \bigcap_{j=1}^{n}\{B_{1+j}\notin \D_\C\}$ for every $n\in \NN$, we obtain that
                    \begin{align}
                        \PP_\sigma^{b_1}(T_{\rev}>1+\ell_\eps)
                            & \le \PP_\sigma^{b_1}\left(\bigcap_{j=1}^{\ell_\eps}\{B_{1+j}\notin \D_\C\}\right)
                                &&\text{(def. of $T_{\rev}$)}\nonumber\\
                            & = \EE_\sigma^{b_1}\left(\PP_\sigma^{b_1}\left(\bigcap_{j=1}^{\ell_\eps}\{B_{1+j}\notin \D_\C\}\givenm H_1\right)\right)
                                &&\text{(tower rule)}\nonumber\\
                            & \le (1-p_{\min})^{\ell_\eps}
                                &&\text{(by Eq.~\eqref{equation: alternative geometric revelation})}\nonumber\\
                            & \le \dfrac{\eps}{3}.
                                &&\text{(def. of $\ell_\eps$)}
                        \label{equation: alternative first revelation}
                    \end{align}
                    Moreover, letting $n$ tend to infinity in the same bound, we get that
                    \begin{align*}
                        \PP_\sigma^{b_1}(T_{\rev}=\infty)
                            & = \lim_{n\to\infty}\PP_\sigma^{b_1}(T_{\rev}>1+n)
                                &&\text{(continuity from above)}\\
                            & \le \lim_{n\to\infty}(1-p_{\min})^{n}
                                &&\text{(by Eq.~\eqref{equation: alternative geometric revelation})}\\
                            & = 0.
                                &&\text{($p_{\min}>0$)}
                    \end{align*}
                    In particular, we have that $\PP_\sigma^{b_1}(T_{\rev}<\infty)=1$.
                    On the event $\{T_{\rev}<\infty\}$, denote by $H_{\rev}\defas H_{T_{\rev}}$ the \emph{first revealed history} and by
                    \[
                        \H_{\rev}^{\sigma}(b_1)\defas\left\{h\in \bigcup_{m\in \NN^*}\H_m(b_1)\colon\; \PP_\sigma^{b_1}\left(H_{\rev}=h\right)>0\right\}
                    \]
                    the set of its realizations.
                    Every realization $h_\rev\in \H_{\rev}^{\sigma}(b_1)$ reveals a unique state $k_\rev\in \K$ with $\{k_\rev\}\in \Q$ such that $b_{h_\rev}^{b_1}=\delta_{k_\rev}$.
                    Denote by $\mu$ the law of the first revealed history, i.e., $\mu(h_\rev)\defas\PP_{\sigma}^{b_1}\left(H_{\rev}=h_\rev\right)$.
                    Throughout the proof, every sum indexed by $h_\rev$ ranges over $\H_{\rev}^{\sigma}(b_1)$, and thus $\sum_{h_\rev}\mu(h_\rev)=\PP_\sigma^{b_1}(T_{\rev}<\infty)=1$.
                    
                \vspace{1em}
                \smallskip\noindent\textit{Uniform reset lower bound.}
                    Fix a pair of states $k,k'\in \K$ with $\{k\},\{k'\}\in \Q$ and let $r_{k,k'}$ denote the length of a shortest belief-support path in $\C$ from $\{k\}$ to $\{k'\}$.
                    Fix such a path
                    \[
                        q_0=\{k\},q_1,\ldots,q_{r_{k,k'}}=\{k'\},
                    \]
                    together with actions $i_0,\ldots,i_{r_{k,k'}-1}$ and signals $s_1,\ldots,s_{r_{k,k'}}$ satisfying $q_m\in \Q$, $i_m\in \E(q_m)$, and $q_{m+1}=\psi(q_m,i_m,s_{m+1})$ for every $m\in [0\until r_{k,k'}-1]$.
                    Choosing the states along the path backwards from $k_{r_{k,k'}}\defas k'$, the definition of $\psi$ provides states $k_0,\ldots,k_{r_{k,k'}}$ such that $k_0=k$, $k_{r_{k,k'}}=k'$, and, for every $m\in [0\until r_{k,k'}-1]$,
                    \[
                        k_m\in q_m
                        \qquad\text{and}\qquad
                        P_{k_m,k_{m+1}}(i_m,s_{m+1})>0.
                    \]
                    Since every nonzero entry of the transition function is at least $p_{\min}$, the probability of success of such a path is at least
                    \begin{align}
                        \prod_{m=0}^{r_{k,k'}-1}P_{k_m,k_{m+1}}(i_m,s_{m+1})\ge p_{\min}^{r_{k,k'}}.
                        \label{equation: alternative path probability}
                    \end{align}
                    Since $\C$ is maximal, Lemma~\ref{Result: short paths between singleton supports} gives such a path made of singleton belief-supports such that $r_{k,k'}\le |\K|-1\le |\K|$.
    
                    We choose the following parameters for the rest of the proof: the trial length $L_\eps$, the number of trials $N_\eps$, and the reset horizon $m_{\rev}$ defined by
                    \begin{equation}
                        L_\eps\defas \ell_\eps+|\K|,
                        \qquad
                        N_\eps\defas\left\lceil\dfrac{3\log(3/\eps)}{2p_{\min}^{|\K|}}\right\rceil,
                        \qquad
                        m_{\rev}\defas 1+L_\eps N_\eps.
                        \label{equation: alternative reset horizon}
                    \end{equation}
                    We partition the horizon into trials of length $L_\eps$, i.e., trial $j\in \NN^*$ is the block of stages $[m_j\until m_{j+1}-1]$ with $m_j\defas 1+(j-1)L_\eps$.
                    In particular, $m_{j+1}=m_j+L_\eps$ for every $j\in \NN^*$ and $m_{N_\eps+1}=m_{\rev}$.
            
                    Fix a first revealed history $h_\rev\in \H_{\rev}^{\sigma}(b_1)$ and recall that $k_\rev$ denotes the state it reveals.
                    We construct a $\C$-safe strategy $\sigma_{h_\rev}$ from the second belief $b_1'\in \Delta_\C$, whose objective is to reach the same revealed belief $\delta_{k_\rev}$.
                    The strategy $\sigma_{h_\rev}$ proceeds trial by trial as follows:
                    \begin{itemize}
                        \item
                            If, at the beginning of trial $j$, the current belief is a revealed belief $\delta_k\in \D_\C$, then $\sigma_{h_\rev}$ starts a reset attempt toward $\delta_{k_\rev}$, i.e., it plays the actions $i_0,\ldots,i_{r_{k,k_\rev}-1}$ along the fixed path from $\{k\}$ to $\{k_\rev\}$, as long as the observed belief-supports agree with the sequence $q_0,\ldots,q_{r_{k,k_\rev}}$.
                        \item
                            Otherwise, $\sigma_{h_\rev}$ enters a waiting phase, in which it plays $\C$-safe actions for at most $\ell_\eps$ stages, until a revealed belief $\delta_{k}\in \D_\C$ is observed; in the latter case, it starts a reset attempt toward $\delta_{k_\rev}$ from $\delta_k$, as above.
                        \item
                            If no revealed belief is observed during the $\ell_\eps$ stages of the waiting phase, or if the observed belief-supports leave the fixed path during a reset attempt, then $\sigma_{h_\rev}$ plays $\C$-safe actions until the end of trial $j$, i.e., until stage $m_{j+1}$, at which a new trial begins.
                        \item
                            As soon as the target belief $\delta_{k_\rev}$ is reached, $\sigma_{h_\rev}$ switches to the continuation strategy $\sigma[h_\rev]$ of the first copy.
                    \end{itemize}
                    Since $\sigma\in \Sigma_\C(b_1)$ and every action prescribed along a path of $\C$ or during a waiting phase is $\C$-safe, carrying out the same construction from any $b\in\Delta_\C$ yields a strategy, still denoted by $\sigma_{h_\rev}$, in $\Sigma_\C(b)$.
                    Define the first time at which the target state $k_\rev$ is revealed by
                    \[
                        T_{k_\rev}\defas \inf\left\{m\in \NN^*\colon\; B_m=\delta_{k_\rev}\right\}.
                    \]
                    We prove that, for every $b\in \Delta_\C$,
                    \begin{align}
                        \PP^{b}_{\sigma_{h_\rev}}(T_{k_\rev}> m_{\rev})\le \left(1-\left(1-\eps/3\right)p_{\min}^{|\K|}\right)^{N_\eps}\le \dfrac{\eps}{3}.
                        \label{equation: alternative reset bound}
                    \end{align}
            
                    We first establish the following one-trial inequality.
                    For every trial index $j\in \NN^*$ and admissible history $h_{m_j}\in \H_{m_j}(b)$ such that $\PP_{\sigma_{h_\rev}}^{b}\left(H_{m_j}=h_{m_j},T_{k_\rev}>m_j\right)>0$,
                    \begin{align}
                        \PP_{\sigma_{h_\rev}}^{b}\left(T_{k_\rev}\le m_{j+1}\givenm H_{m_j}=h_{m_j},T_{k_\rev}>m_j\right)\ge \left(1-\eps/3\right)p_{\min}^{|\K|}.
                        \label{equation: alternative one-trial bound}
                    \end{align}
                    Note that the event $\{T_{k_\rev}>m_j\}$ is determined by the history $H_{m_j}$.
                    Since the target belief $\delta_{k_\rev}$ has not been reached before stage $m_j$, the strategy $\sigma_{h_\rev}$ is, at stage $m_j$, in one of the following two cases:
                    \begin{itemize}
                        \item
                            \emph{A reset attempt starts at stage $m_j$.}
                            Suppose that $b_{h_{m_j}}^{b}=\delta_k$ for some revealed belief $\delta_k\in \D_\C$.
                            Since $T_{k_\rev}>m_j$, we have that $k\neq k_\rev$.
                            By construction of $\sigma_{h_\rev}$, the reset attempt reaches the target belief $\delta_{k_\rev}$ whenever the successive states and signals follow the fixed path from $\{k\}$ to $\{k_\rev\}$, which takes $r_{k,k_\rev}\le |\K|\le L_\eps$ stages and thus terminates before stage $m_{j+1}=m_j+L_\eps$.
                            Therefore, we deduce that
                            \begin{align*}
                                &\PP_{\sigma_{h_\rev}}^{b}\left(T_{k_\rev}\le m_{j+1}\givenm H_{m_j}=h_{m_j},T_{k_\rev}>m_j\right)\\
                                    &\qquad \ge \prod_{a=0}^{r_{k,k_\rev}-1}P_{k_a,k_{a+1}}(i_a,s_{a+1})
                                        &&\text{(construction of $\sigma_{h_\rev}$)}\\
                                    &\qquad \ge p_{\min}^{r_{k,k_\rev}}
                                        &&\text{(by Eq.~\eqref{equation: alternative path probability})}\\
                                    &\qquad \ge p_{\min}^{|\K|}\ge \left(1-\eps/3\right)p_{\min}^{|\K|},
                                        &&\text{($r_{k,k_\rev}\le |\K|$)}
                            \end{align*}
                            and thus inequality \eqref{equation: alternative one-trial bound} follows.
                        \item
                            \emph{A waiting phase starts at stage $m_j$.}
                            Suppose that $b_{h_{m_j}}^{b}\notin \D_\C$.
                            The controller plays a $\C$-safe strategy until a revealed belief in $\D_\C$ is observed, for at most $\ell_\eps$ stages.
                            By Eq.~\eqref{equation: alternative geometric revelation} applied with $(m,n)=(m_j,\ell_\eps)$, the waiting phase ends with a revealed belief with high probability, i.e.,
                            \begin{align}
                                \PP_{\sigma_{h_\rev}}^{b}\left(\bigcup_{r=1}^{\ell_\eps}\{B_{m_j+r}\in \D_\C\}\givenm H_{m_j}=h_{m_j},T_{k_\rev}>m_j\right)
                                &\ge 1-(1-p_{\min})^{\ell_\eps}\nonumber\\
                                &\ge 1-\dfrac{\eps}{3}.
                                \label{equation: alternative waiting bound}
                            \end{align}
                            We decompose this union into the pairwise disjoint events indexed by the first stage $m_j+r$, with $r\in [1\until \ell_\eps]$, at which a revealed belief is observed, and by the revealed state.
                            On the event where $B_{m_j+r}=\delta_{k_\rev}$, we already have that $T_{k_\rev}\le m_j+r\le m_{j+1}$.
                            On the event where $B_{m_j+r}=\delta_k$ with $k\neq k_\rev$, the strategy $\sigma_{h_\rev}$ starts a reset attempt toward $\delta_{k_\rev}$ at stage $m_j+r$, which terminates before stage $m_j+r+r_{k,k_\rev}\le m_j+\ell_\eps+|\K|=m_{j+1}$ and, as in the previous case, succeeds with conditional probability at least $p_{\min}^{r_{k,k_\rev}}\ge p_{\min}^{|\K|}$.
                            Therefore, we deduce that
                            \begin{align*}
                                &\PP_{\sigma_{h_\rev}}^{b}\left(T_{k_\rev}\le m_{j+1}\givenm H_{m_j}=h_{m_j},T_{k_\rev}>m_j\right)\\
                                    &\qquad \ge p_{\min}^{|\K|}\PP_{\sigma_{h_\rev}}^{b}\left(\bigcup_{r=1}^{\ell_\eps}\{B_{m_j+r}\in \D_\C\}\givenm H_{m_j}=h_{m_j},T_{k_\rev}>m_j\right)
                                        &&\text{(decomposition)}\\
                                    &\qquad \ge \left(1-\eps/3\right)p_{\min}^{|\K|},
                                        &&\text{(by Eq.~\eqref{equation: alternative waiting bound})}
                            \end{align*}
                            which proves \eqref{equation: alternative one-trial bound}.
                    \end{itemize}
            
                    We now prove that, for every trial index $j\in \NN^*$,
                    \begin{align}
                        \PP_{\sigma_{h_\rev}}^{b}(T_{k_\rev}>m_j)\le \left(1-\left(1-\eps/3\right)p_{\min}^{|\K|}\right)^{j-1}.
                        \label{equation: alternative trial inequality}
                    \end{align}
                    We start by observing that the base case holds.
                    When $j=1$, we have that 
                    \[
                        \PP_{\sigma_{h_\rev}}^{b}(T_{k_\rev}>m_1)\le 1=\left(1-\left(1-\eps/3\right)p_{\min}^{|\K|}\right)^0
                    \]
                    and thus \eqref{equation: alternative trial inequality} holds.
                    Assume now that \eqref{equation: alternative trial inequality} holds for some $j\in \NN^*$.
                    We prove that it holds for trial $j+1$.
                    Since $\{T_{k_\rev}>m_{j+1}\}\subseteq \{T_{k_\rev}>m_j\}$, if $\PP_{\sigma_{h_\rev}}^{b}(T_{k_\rev}>m_j)=0$, then $\PP_{\sigma_{h_\rev}}^{b}(T_{k_\rev}>m_{j+1})=0$ and \eqref{equation: alternative trial inequality} holds for $j+1$.
                    Otherwise, using again the inclusion $\{T_{k_\rev}>m_{j+1}\}\subseteq \{T_{k_\rev}>m_j\}$ and the tower rule, we have that
                    \begin{align*}
                        &\PP_{\sigma_{h_\rev}}^{b}\left(T_{k_\rev}>m_{j+1}\right)\\
                            &\qquad = \PP_{\sigma_{h_\rev}}^{b}\left(T_{k_\rev}>m_{j+1}\givenm T_{k_\rev}>m_j\right)\PP_{\sigma_{h_\rev}}^{b}\left(T_{k_\rev}>m_j\right)\\
                            &\qquad = \left(1-\PP_{\sigma_{h_\rev}}^{b}\left(T_{k_\rev}\le m_{j+1}\givenm T_{k_\rev}>m_j\right)\right)\\
                            &\qquad\qquad \cdot\PP_{\sigma_{h_\rev}}^{b}\left(T_{k_\rev}>m_j\right)\\
                            &\qquad \le \left(1-\left(1-\eps/3\right)p_{\min}^{|\K|}\right)\PP_{\sigma_{h_\rev}}^{b}\left(T_{k_\rev}>m_j\right)
                                &&\text{(by Eq.~\eqref{equation: alternative one-trial bound})}\\
                            &\qquad \le \left(1-\left(1-\eps/3\right)p_{\min}^{|\K|}\right)^{j},
                                &&\text{(induction hyp.)}
                    \end{align*}
                    which proves inequality \eqref{equation: alternative trial inequality}.
                    Then, since $m_{N_\eps+1}=m_{\rev}$, we deduce that, for every $b\in \Delta_\C$,
                    \begin{align*}
                        \PP_{\sigma_{h_\rev}}^{b}(T_{k_\rev}>m_{\rev})
                            & = \PP_{\sigma_{h_\rev}}^{b}(T_{k_\rev}>m_{N_\eps+1})\\
                            & \le \left(1-\left(1-\eps/3\right)p_{\min}^{|\K|}\right)^{N_\eps}
                                &&\text{(by Eq.~\eqref{equation: alternative trial inequality})}\\
                            & \le \dfrac{\eps}{3},
                                &&\text{($1-x\le e^{-x}$, $1-\eps/3\ge 2/3$, and def. of $N_\eps$)}
                    \end{align*}
                    which proves inequality \eqref{equation: alternative reset bound}.
                    Moreover, letting $j$ tend to infinity, since $m_{j+1}=1+jL_\eps$ tends to infinity, continuity from above yields
                    \[
                        \PP_{\sigma_{h_\rev}}^{b}(T_{k_\rev}=\infty)=\lim_{j\to\infty}\PP_{\sigma_{h_\rev}}^{b}(T_{k_\rev}>m_{j+1})\le \lim_{j\to\infty}\left(1-\left(1-\eps/3\right)p_{\min}^{|\K|}\right)^{j}=0.
                    \]
                    In particular, we have that $\PP_{\sigma_{h_\rev}}^{b}(T_{k_\rev}<\infty)=1$.
            
                \vspace{1em}
                \smallskip\noindent\textit{Coupling strategy from $b_1'$.}
                    We now define the coupling strategy $\sigma'$ by averaging the strategies $\sigma_{h_\rev}$ according to the law $\mu$ of the first revealed history.
                    For every $m\in \NN^*$, history $h_m\in \H_m$, and action $i\in \I$, define
                    \[
                        \sigma'(h_m)(i)\defas
                            \begin{cases}
                                \dfrac{\sum_{h_\rev}\mu(h_\rev)\PP_{\sigma_{h_\rev}}^{b_1'}(H_m=h_m,I_m=i)}{\sum_{h_\rev}\mu(h_\rev)\PP_{\sigma_{h_\rev}}^{b_1'}(H_m=h_m)},
                                    &\text{if }\sum_{h_\rev}\mu(h_\rev)\PP_{\sigma_{h_\rev}}^{b_1'}(H_m=h_m)>0,\\[3ex]
                                \dfrac{\mathbf{1}_{\left\{i\in \E\left(\supp(b_{h_m}^{b_1'})\right)\right\}}}{\left|\E\left(\supp(b_{h_m}^{b_1'})\right)\right|},
                                    &\text{else if }h_m\in \H_m(b_1')\text{ and }\supp(b_{h_m}^{b_1'})\in \Q,\\[3ex]
                                \dfrac{1}{|\I|},
                                    &\text{otherwise}.
                            \end{cases}
                    \]
                    In other words, the strategy $\sigma'$ follows the mixture of the strategies $\sigma_{h_\rev}$ on the histories that they generate with positive probability, plays a uniformly chosen $\C$-safe action on the remaining $\C$-safe histories, and is defined arbitrarily on the histories that lead to beliefs with support that does not belong to $\Q$.
                    These last histories have a probability zero of occurring under the coupling construction.
                    By construction, the strategy $\sigma'$ depends only on the history of the second copy $\POMDP(b_1')$, and is therefore a well-defined strategy in $\POMDP(b_1')$.
            
                    We first prove that $\sigma'$ is $\C$-safe from $b_1'$.
                    Fix a history $h_m\in \H_m(b_1')$ such that $q\defas \supp(b_{h_m}^{b_1'})\in \Q$ and an action $i\notin \E(q)$.
                    Since $\sigma_{h_\rev}\in \Sigma_\C(b_1')$ for every $h_\rev$ with $\mu(h_\rev)>0$, we have that $\sigma_{h_\rev}(h_m)(i)=0$ whenever $\PP_{\sigma_{h_\rev}}^{b_1'}(H_m=h_m)>0$, and therefore
                    \[
                        \PP_{\sigma_{h_\rev}}^{b_1'}(H_m=h_m,I_m=i)=\PP_{\sigma_{h_\rev}}^{b_1'}(H_m=h_m)\sigma_{h_\rev}(h_m)(i)=0.
                    \]
                    Hence, the first two branches in the definition of $\sigma'$ give that $\sigma'(h_m)(i)=0$ for every action $i\notin \E(q)$, and since $\sigma'(h_m)\in \Delta(\I)$, we get that $\sum_{i\in \E(q)}\sigma'(h_m)(i)=1$.
                    Moreover, since $\C$ is closed, playing an action $i\in \E(q)$ keeps the belief-support inside $\Q$, i.e., for every signal $s\in \S$ such that $\psi(q,i,s)\neq \emptyset$,
                    \[
                        \supp\left(b_{h_m\times (i,s)}^{b_1'}\right)=\psi(q,i,s)\in \Post(q,i)\subseteq \Q.
                    \]
                    Since $\supp(b_1')\in \Q$, an induction on $m\in \NN^*$ yields that $\sigma'\in \Sigma_\C(b_1')$.
            
                    By using an induction argument on $m\in \NN^*$, we next prove that, for every $h_m\in \H_m(b_1')$ with $m\in \NN^*$,
                    \begin{align}
                        \PP_{\sigma'}^{b_1'}(H_m=h_m)=\sum_{h_\rev}\mu(h_\rev)\PP_{\sigma_{h_\rev}}^{b_1'}(H_m=h_m).
                        \label{equation: alternative mixture identity}
                    \end{align}
                    For the base case $m=1$, we have that $\H_1(b_1')=\{\emptyset\}$ and, since $\sum_{h_\rev}\mu(h_\rev)=1$, both sides of \eqref{equation: alternative mixture identity} are equal to $1$.
                    For the induction case, assume that the statement holds at some stage $m\in \NN^*$.
                    Consider an admissible history $h_{m+1}=h_m\times (i_m,s_{m+1})$ and set $b_m\defas b_{h_m}^{b_1'}$.
                    Recall that the conditional signal probability
                    \[
                        \PP(s_{m+1}\given b_m,i_m)=\sum_{k\in \K}\sum_{k'\in \K}b_m(k)P_{k,k'}(i_m,s_{m+1})
                    \]
                    depends only on the belief $b_m$ and on the action $i_m$, and not on the strategy.
                    If the weighted probability $\sum_{h_\rev}\mu(h_\rev)\PP_{\sigma_{h_\rev}}^{b_1'}(H_m=h_m)$ is equal to zero, then both sides of \eqref{equation: alternative mixture identity} at $h_{m+1}$ are equal to zero, since $\{H_{m+1}=h_{m+1}\}\subseteq\{H_m=h_m\}$ and by the induction hypothesis.
                    Otherwise, by the induction hypothesis, we have that
                    \begin{align*}
                        &\PP_{\sigma'}^{b_1'}(H_{m+1}=h_{m+1})\\
                            &\qquad = \PP_{\sigma'}^{b_1'}(H_m=h_m)\sigma'(h_m)(i_m)\PP(s_{m+1}\given b_m,i_m)\\
                            &\qquad = \sum_{h_\rev}\mu(h_\rev)\PP_{\sigma_{h_\rev}}^{b_1'}(H_m=h_m,I_m=i_m)\PP(s_{m+1}\given b_m,i_m)
                                &\text{(def. of $\sigma'$)}\\
                            &\qquad = \sum_{h_\rev}\mu(h_\rev)\PP_{\sigma_{h_\rev}}^{b_1'}(H_{m+1}=h_{m+1}),
                    \end{align*}
                    which proves the induction.
            
                \vspace{1em}
                \smallskip\noindent\textit{Asynchronous coupling.}
                    We now couple two copies of $\POMDP$: the first copy starts from $b_1$ and is played according to $\sigma$, while the second copy starts from $b_1'$ and is played according to $\sigma'$.
                    Denote by $(H_m,B_m,G_m)_{m\in \NN^*}$ (resp., $(H_m',B_m',G_m')_{m\in \NN^*}$) the history, belief, and reward process in the first (resp., second) copy $\POMDP(b_1)$ (resp., $\POMDP(b_1')$).
                    Formally, let $\nu$ be the probability measure on $\Omega\times\Omega$ obtained as follows.
                    Draw $h_\rev$ according to $\mu$.
                    Conditionally on $h_\rev$, draw the first play up to $T_\rev$ according to its conditional law under $\PP_\sigma^{b_1}$, and draw the second play up to $T_{k_\rev}$ according to the law induced by $\sigma_{h_\rev}$ from $b_1'$.
                    At these respective times, both copies have belief $\delta_{k_\rev}$; complete the two plays with the same continuation generated by $\sigma[h_\rev]$.
                    In particular:
                    \begin{itemize}
                        \item
                            The first marginal of $\nu$ is the law of the process in $\POMDP(b_1)$ induced by $\sigma$.
                        \item
                            The second marginal of $\nu$ is the law of the process in $\POMDP(b_1')$ induced by $\sigma'$, by Eq.~\eqref{equation: alternative mixture identity}.
                    \end{itemize}
                    We write $\EE_\nu$ and $\PP_\nu$ for the respective expectation and probability measures.
            
                    By the coupling construction, the first (resp., second) copy of the POMDP is on a (random) Dirac belief $\delta_{K_\rev}$ with $\{K_\rev\}\in \Q$ at stage $T_{\rev}$ (resp., at the first stage at which the second copy reveals $K_\rev$).
                    Formally, let $K_\rev$ be the random state such that $B_{T_{\rev}}=\delta_{K_\rev}$ and define
                    \[
                        T_{K_\rev}\defas \inf\left\{m\in \NN^*\colon\; B_m'=\delta_{K_\rev}\right\}.
                    \]
                    Since $m_{\rev}\ge 1+\ell_\eps$ by the coupling construction, and since $K_\rev$ may be random, we have that
                    \begin{align}
                        \PP_{\nu}(T_{\rev}>m_{\rev})
                            & \le \PP_{\sigma}^{b_1}(T_{\rev}>1+\ell_\eps)\le \dfrac{\eps}{3},
                                &&\text{(first marginal and Eq.~\eqref{equation: alternative first revelation})}\nonumber\\
                        \PP_{\nu}(T_{K_\rev}>m_{\rev})
                            & = \sum_{h_\rev}\mu(h_\rev)\PP_{\sigma_{h_\rev}}^{b_1'}(T_{k_\rev}>m_{\rev})\le \dfrac{\eps}{3}.
                                &&\text{(second marginal and Eq.~\eqref{equation: alternative reset bound})}
                        \label{equation: alternative two revelation times}
                    \end{align}
                    In particular, both revelation times are finite $\nu$-almost surely.
                    Define the synchronization time by $T_{\syn}\defas \max\{T_{\rev},T_{K_\rev}\}$.
                    Since $\{T_{\syn}>m_{\rev}\}=\{T_{\rev}>m_{\rev}\}\cup \{T_{K_\rev}>m_{\rev}\}$, we have that
                    \begin{align}
                        \PP_{\nu}(T_{\syn}>m_{\rev})\le \PP_{\nu}(T_{\rev}>m_{\rev})+\PP_{\nu}(T_{K_\rev}>m_{\rev})\le \dfrac{2\eps}{3}.
                        \label{equation: alternative synchronization}
                    \end{align}
                    Moreover, on the event $\{T_{\syn}<\infty\}$, both copies have reached the same revealed belief $\delta_{K_\rev}$, at stage $T_{\rev}$ for the first copy and at stage $T_{K_\rev}$ for the second one, and then use the same continuation strategy $\sigma[H_{\rev}]$.
                    Therefore, we deduce that
                    \begin{align}
                        \left(G_{T_\rev+r}\right)_{r\ge 0}=\left(G_{T_{K_\rev}+r}'\right)_{r\ge 0},\qquad \nu\text{-almost surely}.
                        \label{equation: alternative coupled tails}
                    \end{align}
            
                \vspace{1em}
                \smallskip\noindent\textit{$n$-stage value comparison.}
                    Define the auxiliary horizon $\widetilde n_\eps\defas\left\lceil 6m_{\rev}/\eps\right\rceil$.
                    Substituting the reset horizon $m_\rev$ of \eqref{equation: alternative reset horizon} and then using
                    $\ell_\eps\leq\left\lceil\log(3/\eps)/p_{\min}\right\rceil$, which is immediate when $p_{\min}=1$ and follows from $-\log(1-p_{\min})\geq p_{\min}$ when $p_{\min}<1$, we get that
                    \begin{align*}
                        \widetilde n_\eps
                            &=\left\lceil\dfrac{6}{\eps}\left(1+L_\eps N_\eps\right)\right\rceil
                            =\left\lceil\dfrac{6}{\eps}\left(1+\left(\ell_\eps+|\K|\right)N_\eps\right)\right\rceil\\
                            &\leq\left\lceil\dfrac{6}{\eps}\left(1+\left(\left\lceil\dfrac{\log(3/\eps)}{p_{\min}}\right\rceil+|\K|\right)\left\lceil\dfrac{3\log(3/\eps)}{2p_{\min}^{|\K|}}\right\rceil\right)\right\rceil
                            =n_\eps,
                    \end{align*}
                    where $n_\eps$ is the horizon \eqref{equation: explicit horizon} of the statement. Fix a horizon $n\ge n_\eps$ and note that $n\ge \widetilde n_\eps>m_{\rev}$.
                    On the event $\{T_{\syn}\le m_{\rev}\}$, the rewards of the two copies coincide from stage $T_{\rev}$ in the first copy and from stage $T_{K_\rev}$ in the second copy on, by Eq.~\eqref{equation: alternative coupled tails}.
                    Hence, since $g(\cdot)\in [0,1]$, only the stages before the synchronization and the $|T_{\rev}-T_{K_\rev}|$ stages of shift contribute to the difference.
                    Therefore, we obtain that, for every $n\ge n_\eps$,
                    \begin{align*}
                        &\left|\gamma_{n,\C}(b_1,\sigma)-\gamma_{n,\C}(b_1',\sigma')\right|\\
                            &\qquad = \left|\EE_{\nu}\left(\dfrac{1}{n}\sum_{m=1}^n(G_m-G_m')\right)\right|
                                &&\text{(coupling)}\\
                            &\qquad \le \EE_{\nu}\left(\textbf{1}_{\left\{T_{\syn}\le m_\rev\right\}}\dfrac{1}{n}\left|\sum_{m=1}^n (G_m - G_m')\right|\right)\\
                            &\qquad
                                \qquad + \EE_{\nu}\left(\textbf{1}_{\left\{T_{\syn}> m_\rev\right\}}\dfrac{1}{n}\left|\sum_{m=1}^n (G_m - G_m')\right|\right)
                                    &&\text{(triangle ineq. and decomposition)}\\
                            &\qquad \le \dfrac{1}{n}\EE_{\nu}\Bigl(\textbf{1}_{\left\{T_{\syn}\le m_\rev\right\}}\bigl((T_{\rev}-1)+(T_{K_\rev}-1)\\
                            &\qquad
                                \qquad +\left|T_{\rev}-T_{K_\rev}\right|\bigr)\Bigr)+ \PP_{\nu}(T_{\syn}>m_{\rev})
                                    &&\text{(coupled tails and $g(\cdot)\in [0,1]$)}\\
                            &\qquad = \dfrac{2}{n}\EE_{\nu}\left(\textbf{1}_{\left\{T_{\syn}\le m_\rev\right\}}\left(T_{\syn}-1\right)\right)+\PP_{\nu}(T_{\syn}>m_{\rev})
                                &&(a+b+|a-b|=2\max\{a,b\})\\
                            &\qquad \le \dfrac{2m_{\rev}}{n}+\PP_{\nu}(T_{\syn}>m_{\rev})
                                &&\text{(on $\{T_{\syn}\le m_{\rev}\}$)}\\
                            &\qquad \le \dfrac{2m_{\rev}}{n}+\dfrac{2\eps}{3}
                                &&\text{(by Eq.~\eqref{equation: alternative synchronization})}\\
                            &\qquad \le \eps.
                                &&\text{(def. of $\widetilde n_\eps$)}
                    \end{align*}
                    Since $\sigma'\in \Sigma_\C(b_1')$, we obtain that, for every $\sigma\in \Sigma_\C(b_1)$ and $n\ge n_\eps$,
                    \[
                        \gamma_{n,\C}(b_1,\sigma)\le \gamma_{n,\C}(b_1',\sigma')+\eps\le v_{n,\C}(b_1')+\eps.
                    \]
                    Taking the supremum over $\sigma$ yields
                    \begin{equation}
                        v_{n,\C}(b_1)\le v_{n,\C}(b_1')+\eps.
                        \label{equation: alternative inequality 1 coupling}
                    \end{equation}
                    Exchanging the roles of $b_1$ and $b_1'$, we obtain that, for every $n\ge n_\eps$,
                    \begin{equation}
                        v_{n,\C}(b_1')\le v_{n,\C}(b_1)+\eps.
                        \label{equation: alternative inequality 2 coupling}
                    \end{equation}
                    Finally, since the horizon $n_\eps$ depends only on $\eps$, on $p_{\min}$, and on $|\K|$, and not on the pair of initial beliefs nor on the strategy, by combining \eqref{equation: alternative inequality 1 coupling} and \eqref{equation: alternative inequality 2 coupling}, we get that, for every $n\ge n_\eps$ and pair of initial beliefs $b_1,b_1'\in \Delta_\C$,
                    \[
                        \left|v_{n,\C}(b_1)-v_{n,\C}(b_1')\right|\le \eps,
                    \]
                    which completes the proof.
            \end{proof}
            
            The next lemma proves an explicit convergence rate for the $\C$-safe value.
            
            \begin{Lemma}\label{Result: Approximation of the Safe Value}
                Consider a revealing POMDP $\POMDP$ and its BS-MDP $\MDP_B$.
                Fix a maximal end-component $\C=(\Q,\E)\in\mathfrak{M}$ of $\MDP_B$ and $\eps\in (0,1)$.
                Then, for every $n\geq n_\eps$, with $n_\eps$ given by Lemma~\ref{Result: n-stage Safe value}, and every initial belief $b_1\in \Delta_\C$,
                \[
                    \left|v_{n,\C}(b_1)-v_\C(b_1)\right|\le \eps.
                \]
            \end{Lemma}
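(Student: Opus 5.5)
We will prove the two inequalities $v_\C(b_1)\le v_{n,\C}(b_1)+\eps$ and $v_\C(b_1)\ge v_{n,\C}(b_1)-\eps$ separately, by a block decomposition into consecutive blocks of length $n$. In both directions the key tool is Lemma~\ref{Result: n-stage Safe value}: since $n\ge n_\eps$, the $\C$-safe $n$-stage values at any two beliefs of $\Delta_\C$ differ by at most $\eps$. So the $n$-stage value at the (random) belief reached at the start of each block can be replaced by $v_{n,\C}(b_1)$ at a cost of at most $\eps$. Throughout, fix $n\ge n_\eps$ and $b_1\in\Delta_\C$, and write $t_j\defas 1+(j-1)n$ for the first stage of block $j$.

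\emph{Upper bound.} Fix $\sigma\in\Sigma_\C(b_1)$. First I will check a shift property: for every admissible history $h_{t_j}$ with $\PP_\sigma^{b_1}(H_{t_j}=h_{t_j})>0$, the belief $b^{b_1}_{h_{t_j}}$ lies in $\Delta_\C$ and $\sigma[h_{t_j}]\in\Sigma_\C(b^{b_1}_{h_{t_j}})$. This follows because the law of the continuation play under $\PP_\sigma^{b_1}(\,\cdot\given H_{t_j}=h_{t_j})$ equals $\PP^{b^{b_1}_{h_{t_j}}}_{\sigma[h_{t_j}]}$, and the safety event has conditional probability one on almost every history. Consequently, the conditional expected average reward on block $j$ is $\gamma_{n,\C}(b^{b_1}_{h_{t_j}},\sigma[h_{t_j}])\le v_{n,\C}(b^{b_1}_{h_{t_j}})\le v_{n,\C}(b_1)+\eps$. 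Averaging over the $J$ blocks with the tower rule gives $\gamma_{Jn,\C}(b_1,\sigma)\le v_{n,\C}(b_1)+\eps$. For a general horizon $N$, the incomplete last block changes the average by at most $n/N$, because rewards lie in $[0,1]$. Taking $\liminf_{N\to\infty}$ and then the supremum over $\sigma$ yields $v_\C(b_1)\le v_{n,\C}(b_1)+\eps$.

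\emph{Lower bound.} Fix $\delta>0$. The set of admissible histories is countable, so for each belief $b\in\Delta_\C$ that arises at some block start we may choose a strategy $\tau_b\in\Sigma_\C(b)$ with $\gamma_{n,\C}(b,\tau_b)\ge v_{n,\C}(b)-\delta$. Define the concatenated strategy $\sigma^\delta$ as follows: at stage $t_j$, given the history $h_{t_j}$ and the current belief $b\defas b^{b_1}_{h_{t_j}}$, play $\tau_b$ for $n$ stages, using only the history since $t_j$, and then restart. By induction over blocks, $\sigma^\delta$ is $\C$-safe from $b_1$. The reason is that each $\tau_b$ keeps the belief-support in $\Q$ with actions in $\E$, and the belief computed from $b_1$ along the history coincides with the belief computed from $b$ along the block history (Bayes' rule is compositional). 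Hence the conditional expected block average is at least $v_{n,\C}(b)-\delta\ge v_{n,\C}(b_1)-\eps-\delta$, where the last step uses Lemma~\ref{Result: n-stage Safe value}. With the same $n/N$ bound for the incomplete final block, we get $\gamma_\C(b_1,\sigma^\delta)\ge v_{n,\C}(b_1)-\eps-\delta$. Letting $\delta\to0$ gives $v_\C(b_1)\ge v_{n,\C}(b_1)-\eps$.

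The main obstacle is the bookkeeping in the lower bound. Two points need care: the block strategies must be assembled into a single well-defined history-dependent strategy that is genuinely $\C$-safe, and the law of each block under $\sigma^\delta$, conditional on $H_{t_j}$, must be exactly $\PP^{b}_{\tau_b}$. This is the same posterior/shift identity used in the upper bound, applied in the reverse direction. I will state that identity once as a small claim and invoke it in both directions. The arithmetic that handles the incomplete block, through the $n/N$ term, is routine.
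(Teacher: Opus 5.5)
Your proposal is correct and follows essentially the same argument as the paper. Both proofs decompose the horizon into blocks of length $n$ and use Lemma~\ref{Result: n-stage Safe value} to replace the $n$-stage value at each block-start belief by $v_{n,\C}(b_1)$ at a cost of $\eps$. The upper bound comes from the continuation of an arbitrary $\C$-safe strategy, and the lower bound from concatenating near-optimal $\C$-safe block strategies. The only cosmetic difference is that you index the block strategies by the current belief rather than by the block index and history, which is equally valid.
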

            
            \begin{proof}[Proof of Lemma~\ref{Result: Approximation of the Safe Value}]
            
                Consider a revealing POMDP $\POMDP$ and its BS-MDP $\MDP_B$.
                Fix a maximal end-component $\C=(\Q,\E)\in\mathfrak{M}$ of $\MDP_B$, $\eps\in (0,1)$, and the horizon $n_\eps$ given by Lemma~\ref{Result: n-stage Safe value}.
                We prove that, for every horizon $n\ge n_\eps$ and initial belief $b\in \Delta_\C$,
                \[
                    \left|v_{n,\C}(b)-v_{\C}(b)\right|\le \eps.
                \]
            
                    We prove that 
                    \begin{align}
                        v_{n,\C}(b)\le v_\C(b) + \eps.
                        \label{equation: equation number one}
                    \end{align}
                    Fix $n\ge n_\eps$, $b\in \Delta_\C$, and $\eta>0$.
                    We construct a $\C$-safe strategy $\sigma_\eps$ block by block, each of length $n$.
                    At the beginning of block $j$, define
                    \[
                        m_j\defas jn+1, \qquad j\in \NN.
                    \]
                    At the beginning of block $j$, for every realized history $h_{m_j}$, let $b_{m_j}\defas b_{h_{m_j}}^b$ be the current belief.
                    Because the strategy constructed up to this stage is $\C$-safe, we have that $b_{m_j}\in \Delta_\C$.
                    From $b_{m_j}$, choose an $\eta$-optimal $\C$-safe strategy $\sigma_{j,h_{m_j}}$ for the $n$-stage value and follow it during block $j$. By definition of the supremum, it can be chosen such that
                    \begin{align*}
                        \EE_{\sigma_{j,h_{m_j}}}^{b_{m_j}}\left(\dfrac{1}{n}\sum_{m=1}^nG_m\right)
                            &\ge v_{n,\C}(b_{m_j})-\eta
                                &&\left(\text{def. of $\sigma_{j,h_{m_j}}$}\right)\\
                            &\ge v_{n,\C}(b)-\eps-\eta.
                                &&\left(\text{by Lemma~\ref{Result: n-stage Safe value}}\right)
                    \end{align*}
                    Given an arbitrary horizon $N$, write $N\defas qn+r$ with $q\in \NN$ and $r\in [0\until n-1]$.  
                    Then, 
                    \begin{align*}
                        &v_\C(b)\\
                            & \ge \gamma_{\C}(b,\sigma_\eps)
                                &&(\sigma_\eps\in \Sigma_\C(b))\\
                            & = \liminf_{N\to\infty}\EE_{\sigma_\eps}^{b}\left(\dfrac{1}{N}\sum_{m=1}^NG_m\right)
                                &&\text{(definition of $\gamma_\C$)}\\
                            & \ge \liminf_{N\to\infty}\EE_{\sigma_{\eps}}^{b}\left(\dfrac{1}{N}\sum_{m=1}^{qn}G_m\right)
                                &&(g(\cdot)\in [0,1])\\
                            & =  \liminf_{N\to\infty}\dfrac{1}{N}\sum_{j=0}^{q-1}\EE_{\sigma_{\eps}}^{b}\left(\sum_{m=m_j}^{(j+1)n}G_m\right)
                                &&\text{(block decomposition)}\\
                            & = \liminf_{N\to\infty}\dfrac{1}{N}\sum_{j=0}^{q-1}\EE_{\sigma_\eps}^{b}\left(\EE_{\sigma_\eps}^{b}\left(\sum_{m=m_j}^{(j+1)n}G_m\givenm H_{m_j}\right)\right)
                                &&\text{(tower rule)}\\
                            & = \liminf_{N\to\infty}\dfrac{n}{N}\sum_{j=0}^{q-1}\EE_{\sigma_\eps}^{b}\left(\EE_{\sigma_\eps}^{b}\left(\dfrac{1}{n}\sum_{m=m_j}^{(j+1)n}G_m\givenm H_{m_j}\right)\right)
                                &&\text{(introduce $n$)}\\
                            & \ge \liminf_{N\to\infty}\dfrac{n}{N}\sum_{j=0}^{q-1}\EE_{\sigma_\eps}^{b}\left(v_{n,\C}(B_{m_j})-\eta\right)
                                &&(\eta\text{-optimality of the strategy in block $j$})\nonumber\\
                            & \ge \liminf_{N\to\infty}\dfrac{n}{N}\sum_{j=0}^{q-1}\left(v_{n,\C}(b)-\eps-\eta\right)
                                &&\text{(by Lemma~\ref{Result: n-stage Safe value})}\\
                            & = \liminf_{N\to\infty}\dfrac{qn}{N}\left(v_{n,\C}(b)-\eps-\eta\right)
                                &&\text{(sum of $q$ identical terms)}\\
                            & = \liminf_{q\to\infty}\dfrac{qn}{qn+r}\left(v_{n,\C}(b)-\eps-\eta\right)
                                &&(N=qn+r)\\
                            & = v_{n,\C}(b)-\eps-\eta.
                    \end{align*}
                    Since $\eta$ was taken arbitrary, we deduce that
                    \[
                        v_\C(b)\ge v_{n,\C}(b)-\eps.
                    \]
                    
                    We now prove that 
                    \begin{align}
                        v_\C(b)\le v_{n,\C}(b)+\eps.
                        \label{equation: equation number 2}
                    \end{align}
                    Fix an arbitrary strategy $\sigma\in \Sigma_\C(b)$.
                    The continuation of $\sigma$ at the beginning of a block $j$ is $\C$-safe from the current (random) belief $B_{m_j}$. Therefore, almost surely,
                    \begin{align*}
                        \EE_\sigma^{b}\left(\dfrac{1}{n}\sum_{m=m_j}^{m_j+n-1}G_m\givenm H_{m_j}\right)\le v_{n,\C}(B_{m_j})\le v_{n,\C}(b)+\eps.
                    \end{align*}
                    Writing $N=qn+r$ as above, we obtain 
                    \[
                        \EE_{\sigma}^{b}\left(\dfrac{1}{N}\sum_{m=1}^NG_m\right)\le \dfrac{qn}{N}v_{n,\C}(b)+\eps+\dfrac{r}{N}.
                    \]
                    Taking the limit inferior as $N\to \infty$,
                    \[
                        \gamma_\C(b,\sigma)\le v_{n,\C}(b)+\eps.
                    \]
                    Since $\sigma\in \Sigma_\C(b)$ was arbitrary, taking the supremum yields 
                    \[
                        v_\C(b)\le v_{n,\C}(b)+\eps.
                    \]
                    By combining \eqref{equation: equation number one} and \eqref{equation: equation number 2} we obtain that, for every $n\ge n_\eps$ and $b\in \Delta_\C$,
                    \begin{align}
                        \left|v_{n,\C}(b)-v_{\C}(b)\right|\le \eps,
                    \end{align}
                    which concludes the proof.
            \end{proof}
    
            \begin{Lemma}\label{Result: Independence of the Safe Value}
                Consider a revealing POMDP $\POMDP$ and its BS-MDP $\MDP_B$.
                Fix a maximal end-component $\C=(\Q,\E)\in\mathfrak{M}$ of $\MDP_B$.
                Then, the $\C$-safe value is independent of the initial belief, i.e., for every pair of initial beliefs $b_1,b_1'\in \Delta_\C$,
                \[
                    v_\C(b_1)=v_\C(b_1').
                \]
            \end{Lemma}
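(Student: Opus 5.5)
The plan is a triangle inequality combining Lemma~\ref{Result: n-stage Safe value} and Lemma~\ref{Result: Approximation of the Safe Value}, followed by letting the error tend to zero. Fix a maximal end-component $\C=(\Q,\E)\in\mathfrak{M}$ and two initial beliefs $b_1,b_1'\in\Delta_\C$. Fix an arbitrary $\eps\in(0,1)$ and let $n_\eps$ be the horizon of Lemma~\ref{Result: n-stage Safe value}. We take any single horizon $n\ge n_\eps$, for instance $n=n_\eps$.

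At this horizon we write
\[
    \left|v_\C(b_1)-v_\C(b_1')\right|
    \le \left|v_\C(b_1)-v_{n,\C}(b_1)\right|
    +\left|v_{n,\C}(b_1)-v_{n,\C}(b_1')\right|
    +\left|v_{n,\C}(b_1')-v_\C(b_1')\right|.
\]
The first and third terms are each at most $\eps$ by Lemma~\ref{Result: Approximation of the Safe Value}. That lemma applies to every initial belief in $\Delta_\C$ with the same horizon $n_\eps$, so it covers both $b_1$ and $b_1'$. The middle term is at most $\eps$ by Lemma~\ref{Result: n-stage Safe value}, since both beliefs lie in $\Delta_\C$ and $n\ge n_\eps$. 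Hence $\left|v_\C(b_1)-v_\C(b_1')\right|\le 3\eps$.

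The left-hand side does not depend on $\eps$. Letting $\eps\to 0$ therefore gives $v_\C(b_1)=v_\C(b_1')$.

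The only point to check is that both earlier lemmas hold with a common horizon that is uniform in the initial belief; both statements provide exactly this. It is also worth confirming that $v_\C(b)$ is well defined, i.e.\ that $\Sigma_\C(b)\neq\emptyset$ for every $b\in\Delta_\C$. This holds because $\C$ is closed, so playing an action of $\E(Q_m)$ at each stage is a $\C$-safe strategy. I expect no real obstacle: all the difficulty, namely the coupling, was already handled in Lemma~\ref{Result: n-stage Safe value}.
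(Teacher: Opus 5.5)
Your proposal is correct and follows essentially the same argument as the paper: a three-term triangle inequality at a horizon $n\ge n_\eps$, bounding the middle term by Lemma~\ref{Result: n-stage Safe value} and the outer terms by Lemma~\ref{Result: Approximation of the Safe Value}, then letting $\eps\to 0$. Your extra check that $\Sigma_\C(b)\neq\emptyset$, which follows from closedness and nonempty $\E(q)$, is a sound addition that the paper leaves implicit.
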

            
            \begin{proof}[Proof of Lemma~\ref{Result: Independence of the Safe Value}]
                We prove that, for every pair of initial beliefs $b_1,b_1'\in \Delta_\C$, the following equality holds
                \[  
                    v_\C(b_1)=v_\C(b_1').
                \]
                By Lemmas~\ref{Result: n-stage Safe value} and~\ref{Result: Approximation of the Safe Value}, for every $\eps\in (0,1)$, the horizon $n_\eps$ given by Lemma~\ref{Result: n-stage Safe value} satisfies that, for every $n\ge n_\eps$,
                \[
                    \left|v_{n,\C}(b_1)-v_{n,\C}(b_1')\right|\le \eps
                    \quad\text{and}\quad
                    \left|v_{n,\C}(b)-v_\C(b)\right|\le \eps
                    \quad\text{for every }b\in\Delta_\C.
                \]
                Then, 
                \[
                    \left|v_\C(b_1)-v_\C(b_1')\right|
                        \le \left|v_\C(b_1) - v_{n,\C}(b_1)\right|
                            + \left|v_{n,\C}(b_1) - v_{n,\C}(b_1')\right|
                            + \left|v_\C(b_1') - v_{n,\C}(b_1')\right|
                        \le 3\eps.
                \]
                Since $\eps\in (0,1)$ was arbitrary, it follows that $v_\C(b_1)=v_\C(b_1')$ for every $b_1,b_1'\in \Delta_\C$, which completes the proof.
            \end{proof}
            
        \paragraph{Restriction to maximal end-components and approximate maximum safe value}
            We show that the maximum safe value can be taken over maximal end-components. 
            Namely, for every state $k\in\K$,
            \begin{align}
                v_{\safe}(k)
                    =\max\left\{v_{\M}(\delta_k)\colon\; \M=(\Q',\E')\in \mathfrak{M} \text{ and } \{k\}\in \Q'\right\}.
                \label{equation: safe value over maximal end-components}
            \end{align}
            Since $\mathfrak{M}\subseteq\mathfrak{C}$, the left-hand side is at least the right-hand side. Conversely, fix an end-component $\C=(\Q,\E)\in\mathfrak{C}$ such that $\{k\}\in\Q$. Because $\MDP_B$ is finite, there exists a maximal end-component $\M=(\Q',\E')\in\mathfrak{M}$ such that $\C\preceq\M$. Every $\C$-safe strategy from $\delta_k$ is also $\M$-safe, and hence $v_\C(\delta_k)\leq v_\M(\delta_k)$. Taking the maximum over such end-components $\C$ proves the reverse inequality in \eqref{equation: safe value over maximal end-components}. If there is no such $\C$, both maxima are zero by convention.
    
            The horizon $n_\eps$ in Lemma~\ref{Result: n-stage Safe value} does not depend on the maximal end-component. For every $\eps\in(0,1)$, define
            \[
                v_{\safe,\eps}(k)\defas\max\left\{v_{n_\eps,\M}(\delta_k)\colon \M=(\Q,\E)\in \mathfrak{M} \text{ and } \{k\}\in \Q\right\}.
            \]
            Lemma~\ref{Result: Approximation of the Safe Value}, Eq.~\eqref{equation: safe value over maximal end-components}, and the inequality $|\max_j x_j-\max_j y_j|\le \max_j|x_j-y_j|$ imply that
            \begin{align}
                \max_{k\in \K}\left|v_{\safe}(k)-v_{\safe,\eps}(k)\right|\le \eps.
                \label{equation: approximationg of safe value bound}
            \end{align}
        
            We can now prove Theorem~\ref{Theorem: approximation of max safe value is EXPTIME}.
            
            \begin{proof}[Proof of Theorem~\ref{Theorem: approximation of max safe value is EXPTIME}]
                Consider a revealing POMDP $\POMDP$ with initial belief $b_1$ and its BS-MDP $\MDP_B$. 
                Fix $\eps>0$.
                We use a point-based approach~\cite{shani2013survey} to prove the EXPTIME complexity.
    
                \vspace{1em}
                \smallskip\noindent\textit{Support-preserving grid.}
                    Fix a maximal end-component $\C=(\Q,\E)\in \mathfrak{M}$, a horizon $n\in \NN^*$, and $\eta\in (0,1)$.
                    For every integer $\ell\geq|\K|$, define the $\ell$-uniform grid
                    \[
                        \G_{\ell}\defas\left\{b\in \Delta(\K)\colon\;\ell\, b(k)\in \NN\text{ for every }k\in \K\right\}.
                    \]
                    For every belief-support $q\in \beliefsupport$, denote the set of grid points with support $q$ by $\G_\ell(q)\defas\left\{b\in \G_\ell\colon\; \supp(b)=q\right\}$.
                    Every such grid point is determined by the positive integers $\left(\ell\, b(k)\right)_{k\in q}$, whose sum is $\ell$. Therefore, we have
                    \begin{equation}
                        \left|\G_{\ell}(q)\right|
                        =\binom{\ell-1}{|q|-1}
                        \leq \ell^{|q|-1}
                        \leq \ell^{|\K|}.
                        \label{equation: grid size}
                    \end{equation}
                    Denote by $\Pi_\ell\colon \Delta_\C\to \G_\ell$ a map that associates with every belief $b\in \Delta_\C$ a closest grid point of $\G_{\ell}$ such that the support remains the same, i.e.,
                    \begin{equation}
                        \Pi_\ell(b)\in\argmin_{\substack{b^\prime\in \G_\ell\\[2pt]\supp(b)=\supp(b^\prime)}}\|b-b^\prime\|_1.
                        \label{equation: rounding operator}
                    \end{equation}
                    A standard rounding argument shows that, for every belief $b\in\Delta_\C$ with $q=\supp(b)$, there exists a point $b'\in\G_\ell(q)$ such that
                    \[
                        \|b-b'\|_1\leq\dfrac{2(|q|-1)}{\ell}\leq\dfrac{2|\K|}{\ell}.
                    \]
                    Indeed, round the numbers $(\ell b(k))_{k\in q}$ to positive integers summing to $\ell$; the total mass rounded upward equals the total mass rounded downward, and each is at most $|q|-1$. Since $\Pi_\ell(b)$ is a closest support-preserving grid point, taking $\ell=\left\lceil\dfrac{2(n+1)|\K|}{\eta}\right\rceil$ yields, for every $b\in \Delta_\C$, $\|b-\Pi_\ell(b)\|_1\le \eta/(n+1)$ and $\supp(\Pi_\ell(b))=\supp(b)$.
    
                \vspace{1em}
                \smallskip\noindent\textit{Bellman equations.}
                    Fix a belief-support $q\in \Q$ and a stage $m\in [0\until n-1]$.
                    For every belief $b\in\Delta_\C$ and action $i\in\I$, write
                    $\S_+(b,i)\defas\{s\in\S\colon\PP(s\given b,i)>0\}$.
                    Then, for every belief $b\in \Delta_\C$ with $\supp(b)=q$, respectively for every grid point $b\in \G_\ell$ with $\supp(b)=q$,
                    \begin{align}
                        v_{m+1,\C}(b)
                            &=\max_{i\in \E(q)}\left[\dfrac{1}{m+1}\sum_{k\in \K}b(k)g(k,i)+\dfrac{m}{m+1}\sum_{s\in \S_+(b,i)}\PP(s\given b,i)\,v_{m,\C}\left(\Phi(b,i,s)\right)\right],
                        \label{equation: safe dynamic programming}\\
                        \widehat{v}_{m+1,\C}(b)
                            &\defas\max_{i\in \E(q)}\left[\dfrac{1}{m+1}\sum_{k\in \K}b(k)g(k,i)+\dfrac{m}{m+1}\sum_{s\in \S_+(b,i)}\PP(s\given b,i)\,\widehat{v}_{m,\C}\left(\Pi_\ell\left(\Phi(b,i,s)\right)\right)\right],
                        \label{equation: point-based backup}
                    \end{align}
                    with the conventions $v_{0,\C} = \widehat{v}_{0,\C} = 0$.
                    Both recursions stay in $\Delta_\C$ and select only $\C$-safe actions, since, for every $s\in\S_+(b,i)$, $\supp\left(\Phi(b,i,s)\right)=\psi(q,i,s)\in \Post(q,i)\subseteq \Q$, and $\Pi_\ell$ preserves belief-supports by \eqref{equation: rounding operator}.
                    
                \vspace{1em}
                \smallskip\noindent\textit{Lipschitz property.}
                    We establish the Lipschitz property on every fixed belief-support.
                    Fix a horizon $n\in \NN^*$ and two beliefs $b,\another{b}\in \Delta_\C$ such that $\supp(b)=\supp(\another{b})$.
                    Therefore, we have that $\H_m(b)=\H_m(\another{b})$, $\supp\left(b_{h_m}^{b}\right)=\supp\left(b_{h_m}^{\another{b}}\right)$ for every $h_m\in \H_m(b)$, and $\Sigma_\C(b)=\Sigma_\C(\another{b})$.
    
                    Fix a strategy $\sigma\in \Sigma_\C(b)$.
                    Then, for every $n\in \NN^*$, 
                    \begin{align*}
                        \left|\gamma_{n,\C}(b,\sigma)-\gamma_{n,\C}(\another{b},\sigma)\right|
                            &=\left|\EE_{\sigma}^{b}\left(\dfrac{1}{n}\sum_{m=1}^nG_m\right)-\EE_{\sigma}^{\another{b}}\left(\dfrac{1}{n}\sum_{m=1}^nG_m\right)\right|
                                &&(\text{def. of }\gamma_{n,\C})\\
                            &=\left|\sum_{k\in \K}(b(k)-\another{b}(k))\EE_{\sigma}^{\delta_k}\left(\dfrac{1}{n}\sum_{m=1}^nG_m\right)\right|   
                                &&(b,\another{b}\in \Delta_\C)\\
                            & \leq \left\|b-\another{b}\right\|_1.
                                &&(g\in [0,1])
                    \end{align*}
                    Taking the supremum over $\Sigma_\C(b)=\Sigma_\C(\another{b})$ yields,
                    \begin{equation}
                        \left|v_{n,\C}(b)-v_{n,\C}(\another{b})\right|
                        \leq
                        \left\|b-\another{b}\right\|_1,
                        \label{equation: finite horizon safe Lipschitz}
                    \end{equation}
                    for every $n\in \NN^*$.
    
                \vspace{1em}
                \smallskip\noindent\textit{Error analysis.}
                    The dynamic programming equations \eqref{equation: safe dynamic programming} cannot be solved exactly because $\Delta_\C$ is infinite.
                    We therefore evaluate them on a finite grid of beliefs, following a point-based approach~\cite{shani2013survey,asadi2026revealing}.
                    The grid is chosen so that it preserves the belief-supports, which guarantees that the approximation scheme only manipulates beliefs of $\Delta_\C$ and only selects $\C$-safe actions.
                    We prove by induction on $m\in [0\until n]$ that, for every belief $b\in \Delta_\C$,
                    \begin{equation}
                        \left|v_{m,\C}(b)-\widehat{v}_{m,\C}\left(\Pi_\ell(b)\right)\right|\leq \dfrac{m\eta}{n+1}.
                        \label{equation: projected dynamic programming error}
                    \end{equation}
                    For the base case, \eqref{equation: projected dynamic programming error} is immediate because $v_{0,\C}=\widehat{v}_{0,\C} = 0$.
                    For the induction case, suppose that \eqref{equation: projected dynamic programming error} holds for some $m\in [0\until n-1]$ and every belief in $\Delta_\C$, fix a belief $b\in \Delta_\C$, and write $q\defas\supp(b)$.
                    Then, 
                    \begin{align*}
                        &\left|v_{m+1,\C}(b)-\widehat{v}_{m+1,\C}\left(\Pi_\ell(b)\right)\right|\\
                            &\qquad\le \left|v_{m+1,\C}(b)-v_{m+1,\C}\left(\Pi_\ell(b)\right)\right|+\left|v_{m+1,\C}\left(\Pi_\ell(b)\right)-\widehat{v}_{m+1,\C}\left(\Pi_\ell(b)\right)\right|\\
                            &\qquad\le \left\|b-\Pi_\ell(b)\right\|_1+\left|v_{m+1,\C}\left(\Pi_\ell(b)\right)-\widehat{v}_{m+1,\C}\left(\Pi_\ell(b)\right)\right|\\
                            &\qquad\le \dfrac{\eta}{n+1}+\left|v_{m+1,\C}\left(\Pi_\ell(b)\right)-\widehat{v}_{m+1,\C}\left(\Pi_\ell(b)\right)\right|\\
                            &\qquad\le \dfrac{\eta}{n+1}+\dfrac{m}{m+1}\max_{i\in \E(q)}\Biggl|\sum_{s\in \S_+(\Pi_\ell(b),i)}\PP\left(s\given \Pi_\ell(b),i\right)\\
                            &\qquad\qquad\qquad\cdot\left(v_{m,\C}\left(\Phi\left(\Pi_\ell(b),i,s\right)\right)-\widehat{v}_{m,\C}\left(\Pi_\ell\left(\Phi\left(\Pi_\ell(b),i,s\right)\right)\right)\right)\Biggr|\\
                            &\qquad\le \dfrac{\eta}{n+1}+\dfrac{m}{m+1}\max_{i\in \E(q)}\sum_{s\in \S_+(\Pi_\ell(b),i)}\PP\left(s\given \Pi_\ell(b),i\right)\\
                            &\qquad\qquad\qquad\cdot\left|v_{m,\C}\left(\Phi\left(\Pi_\ell(b),i,s\right)\right)-\widehat{v}_{m,\C}\left(\Pi_\ell\left(\Phi\left(\Pi_\ell(b),i,s\right)\right)\right)\right|\\
                            &\qquad\le \dfrac{\eta}{n+1}+\dfrac{m}{m+1}\cdot\dfrac{m\eta}{n+1}\\
                            &\qquad\le \dfrac{(m+1)\eta}{n+1},
                    \end{align*}
                    where the first inequality follows from the triangle inequality, the second inequality follows from the Lipschitz property \eqref{equation: finite horizon safe Lipschitz}, the third inequality follows from the choice of the grid parameter $\ell$, the fourth inequality follows from the Bellman equations \eqref{equation: safe dynamic programming} and \eqref{equation: point-based backup} and the inequality $|\max_j x_j-\max_j y_j|\le \max_j|x_j-y_j|$, the fifth inequality follows from the triangle inequality, the sixth inequality follows from the induction hypothesis, and the last inequality follows from the inequality $m^2/(m+1)\le m$.
                    Therefore, taking $m=n$ in \eqref{equation: projected dynamic programming error}, we obtain that
                    \begin{equation}
                        \left|v_{n,\C}(b)-\widehat{v}_{n,\C}\left(\Pi_\ell(b)\right)\right|\leq \dfrac{n\eta}{n+1}\leq\eta
                        \qquad\text{for every }b\in \Delta_\C.
                        \label{equation: point-based error bound}
                    \end{equation}
                    In particular, for every state $k\in \K$ such that $\delta_k\in \Delta_\C$, we have that $\delta_k\in \G_\ell$ and thus $\Pi_\ell(\delta_k)=\delta_k$, so that $\left|v_{n,\C}(\delta_k)-\widehat{v}_{n,\C}(\delta_k)\right|\leq \eta$.
    
                \vspace{1em}
                \smallskip\noindent\textit{Horizon truncation.}
                    We now truncate the infinite horizon and prove that the horizon at which the $n$-stage maximal $\C$-safe value approximates the maximal $\C$-safe value is explicit and depends only on $\eta$, $p_{\min}$, and $|\K|$.
                    For $\eps\geq1$, the statement is trivial.
                    Therefore, we assume that $\eps\in (0,1)$ and set $\eta\defas\eps/2$.
    
                    Fix a maximal end-component $\C\in \mathfrak{M}$ of $\MDP_B$.
                    By Lemma~\ref{Result: Approximation of the Safe Value} applied with $\eta$, the value $v_\C(\delta_k)$, with $\{k\}\in \Q$, is approximated up to $\eta$ by $v_{n_\eta,\C}(\delta_k)$, where $n_\eta$ is the horizon \eqref{equation: explicit horizon} applied with $\eta$.
                    Since $\lceil a\rceil\leq a+1$, we obtain that
                    \begin{align}
                        n_\eta
                            &\leq1+\dfrac{6}{\eta}\left(1+\left(1+|\K|+\dfrac{\log(3/\eta)}{p_{\min}}\right)\left(1+\dfrac{3\log(3/\eta)}{2p_{\min}^{|\K|}}\right)\right)\notag\\
                            &=2^{O\left(|\K|\log(1/p_{\min})+\log|\K|+\log(1/\eta)\right)}.
                        \label{equation: explicit horizon bound}
                    \end{align}
    
                    In particular, $n_\eta$ does not depend on the maximal end-component $\C\in \mathfrak{M}$ and can therefore be taken as a common horizon for all maximal end-components.
                    By Eq.~\eqref{equation: approximationg of safe value bound} applied with $\eta$, we obtain
                    \begin{align}
                        \max_{k\in \K}\left|v_{\safe}(k)-v_{\safe,\eta}(k)\right|\le \eta.
                        \label{equation: horizon truncation error}
                    \end{align}
    
                \vspace{1em}
                \smallskip\noindent\textit{Complexity analysis.}
                    Each belief update $\Phi(b,i,s)$ is computed in $O(|\K|^2)$ operations, so one backup \eqref{equation: point-based backup} at a single grid point costs $O\left(|\K|^2|\I||\S|\right)$.
                    The backups are performed at the grid points whose belief-support belongs to $\Q$, and by \eqref{equation: grid size} their number is at most
                    \[
                        \sum_{q\in \Q}\left|\G_\ell(q)\right|
                        \leq |\Q|\,\ell^{|\K|}
                        \leq 2^{|\K|}\ell^{|\K|}.
                    \]
                    Therefore, since $\ell=O\left(n|\K|/\eta\right)$, backward induction over the $n$ stages computes $\widehat{v}_{n,\C}$ in
                    \begin{equation}
                        O\left(n\cdot 2^{|\K|}\ell^{|\K|}\cdot|\K|^2|\I||\S|\right)
                        =2^{O\left(|\K|\log\left(n|\K|/\eta\right)\right)}|\I||\S|
                        \label{equation: point-based running time}
                    \end{equation}
                    operations.
    
                \vspace{1em}
                \smallskip\noindent\textit{Conclusion.}
                    We now run the procedure on every $\C\in \mathfrak{M}$ with $n\defas n_\eta$, and set $\widehat{v}_{\safe,\eta}(k)\defas\max\left\{\widehat{v}_{n_\eta,\C}(\delta_k)\colon \C=(\Q,\E)\in \mathfrak{M},\;\{k\}\in \Q\right\}$.
                    Since $|\max_j x_j-\max_j y_j|\le \max_j|x_j-y_j|$, Eqs.~\eqref{equation: horizon truncation error} and~\eqref{equation: point-based error bound} yield, for every $k\in \K$,
                    \[
                        \left|v_{\safe}(k)-\widehat{v}_{\safe,\eta}(k)\right|
                        \leq \left|v_{\safe}(k)-v_{\safe,\eta}(k)\right|+\left|v_{\safe,\eta}(k)-\widehat{v}_{\safe,\eta}(k)\right|
                        \leq 2\eta=\eps.
                    \]
    
                    By~\eqref{equation: explicit horizon bound}, we have that $\log n_\eta=O\left(|\K|\log(1/p_{\min})+\log|\K|+\log(1/\eta)\right)$.
                    Therefore, summing \eqref{equation: point-based running time} at $n=n_\eta$ over the maximal end-components of $\MDP_B$, whose number is at most $\left|\beliefsupport\right|=2^{|\K|}-1$ because distinct maximal end-components have pairwise disjoint sets of belief-supports~\cite{de1997formal}, the total running time is
                    \begin{align*}
                        2^{|\K|}\cdot 2^{O\left(|\K|\log\left(n_\eta|\K|/\eta\right)\right)}|\I||\S|
                            &=2^{O\left(|\K|\log n_\eta+|\K|\log\left(|\K|/\eta\right)\right)}|\I||\S|\\
                            &=2^{O\left(|\K|^2\log(1/p_{\min})+|\K|\log\left(|\K|/\eta\right)\right)}|\I||\S|
                                &&\left(\text{by Eq. }\text{\eqref{equation: explicit horizon bound}}\right)\\
                            &=2^{O\left(|\K|^2\log(1/p_{\min})+|\K|\log\left(|\K|/\eps\right)\right)}|\I||\S|.
                                &&(\eta=\eps/2)
                    \end{align*}
                    Since $\log(1/p_{\min})$ is at most the number of bits used to encode the transition probabilities of $\POMDP$, the exponent is polynomial in the size of $\POMDP$ and in $\log(1/\eps)$, so that the total running time is exponential in the size of the input, which concludes the proof.
            \end{proof}
    

\subsection{Reduction to Reachability Objectives}\label{Section: Reduction to Reachability Objectives}
    
    This section introduces the class of \emph{Commit POMDPs}, in which the controller commits to the continuation value associated with a revealed state.
    We then prove the reduction of revealing POMDPs with long-run average objectives to Commit POMDPs with reachability objectives.
    
    \paragraph{Commit POMDP}
        Consider a revealing POMDP $\POMDP=(\K,\I,\S,p,g)$. 
        We define the Commit POMDP $\POMDP'=(\K',\I',\S',p',g')$ by
        \begin{itemize}
            \item 
                $\K'=\K\cup\{\top,\bot\}$, where $\{\top,\bot\}$ are absorbing states;
            \item 
                $\I'=\I\cup\I_{\mathsf{com}}$, where $\I_{\mathsf{com}}\defas\left\{\mathsf{com}_k\colon k\in\K\right\}$. 
                The action $\mathsf{com}_k$ means that the controller commits to the continuation value associated with the revealed state $k$:
            \item 
                $\S'=\S\cup\{\top,\bot\}$, where $\{\top,\bot\}$ reveal the absorbing states;
            \item 
                $p'\colon \K'\times \I'\to \Delta(\K'\times \S')$ is the transition function defined by
                \begin{itemize}
                    \item 
                        For every $k,k'\in \K$, $i\in \I$, and $s\in \S$, $p'(k',s\given k,i) = p(k',s\given k,i)$;
                    \item
                        For every $k\in \{\top,\bot\}$, and $i\in \I'$, $p'(k,k\given k,i)=1$;
                    \item 
                        For every original state $\another{k}\in\K$ and every commit action
                        $\mathsf{com}_k\in\I_{\mathsf{com}}$, set
                        \[
                            p'(\top,\top\given \another{k},\mathsf{com}_k) = v_{\safe}(k)\mathbf{1}_{\{\another{k}=k\}},
                        \]
                        and
                        \[
                            p'(\bot,\bot\given \another{k},\mathsf{com}_k) = 1 - v_{\safe}(k)\mathbf{1}_{\{\another{k}=k\}}.
                        \]       
                \end{itemize}  
            \item
                $g'\colon \K'\times \I'\to [0,1]$ is the stage reward defined by $g'\left(q,i'\right) = \mathbb{1}_{\left\{q=\top\right\}}$.
                
        \end{itemize}
        \noindent Every belief $b\in \Delta(\K)$ in $\POMDP$ extends to a belief $b'\in \Delta(\K')$ in $\POMDP'$, defined by $b'(k)\defas b(k)$ for every $k\in \K$ and $b'(\top)\defas b'(\bot)\defas 0$.
        In particular, an initial belief $b_1\in \Delta(\K)$ in $\POMDP$ yields the initial belief $b_1'\in \Delta(\K')$ in $\POMDP'$.
        A history before stage $m\in \NN^*$ is a sequence $h_m'=(i_1',s_2',\ldots,i_{m-1}',s_m')$. 
        The set of histories before stage $m\in \NN^*$ is denoted by $\H_m'\defas (\I'\times \S')^{m-1}$.
        A strategy is a mapping $\sigma\colon \bigcup_{m\in \NN^*}\H_m'\to \I'$.
        The set of strategies in $\POMDP'$ is denoted by $\Sigma'$.
        Given an initial belief $b_1\in \Delta(\K)$ and a strategy $\sigma\in \Sigma'$, we denote by $\PP_{\sigma}^{b_1'}$ the probability measure induced by $\sigma$ from $b_1'$ on the set of plays $\left(\K'\times\I'\times \S'\right)^{\NN^*}$, and by $\EE_{\sigma}^{b_1'}$ the corresponding expectation.
    
    \paragraph{Reachability in Commit POMDPs}
        Consider a revealing POMDP $\POMDP$ and its Commit POMDP $\POMDP'$.
        Given an initial belief $b_1\in \Delta(\K)$ and a strategy $\sigma\in \Sigma'$, the reachability objective to the target state $\top$ is defined by
        \[
            \PP_{\sigma}^{b_1'}(\exists m\in \NN^*\colon\; K_m'=\top).
        \]
        The reachability value is defined by
        \[
            v_{R}'(b_1')\defas\sup_{\sigma\in \Sigma'}\PP_{\sigma}^{b_1'}(\exists m\in \NN^*\colon\; K_m'=\top).
        \]
    
    \paragraph{Approximate Commit POMDP}
        Consider a revealing POMDP $\POMDP$ and its Commit POMDP $\POMDP'$.
        Given a vector $w\in[0,1]^{\K}$, define $\POMDP'[w]$ in the same way as $\POMDP'$, except that $v_{\safe}(k)$ is replaced by $w(k)$ for all $k\in\K$.
        Given $b_1\in\Delta(\K)$, the reachability value of $\POMDP'[w]$ is denoted by $v_{R,w}'(b_1')$.
        By construction, both $\POMDP'$ and $\POMDP'[w]$ satisfy the revealing property.
        Since they have the same actions and signals, they have the same set of strategies.
        Given a strategy $\sigma\in\Sigma'$, denote by $\PP_{\sigma}^{w,b_1'}$ the probability measure induced from $b_1'$ in $\POMDP'[w]$, and by $\EE_{\sigma}^{w,b_1'}$ the corresponding expectation.
    
    \paragraph{Previous result on revealing POMDPs with reachability objectives}
        By~\cite{madani2003undecidability}, the approximation problem for POMDPs with reachability objectives is undecidable in general.
        By~\cite{asadi2026revealing}, the following positive result holds for revealing POMDPs.
        
        \begin{Theorem}
            Approximating revealing POMDPs with reachability objectives is in $\mathrm{EXPTIME}$.
        \end{Theorem}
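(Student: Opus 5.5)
The plan is to prove this along the lines of the proof of Theorem~\ref{Theorem: approximation of max safe value is EXPTIME}. We combine a qualitative analysis on the belief-support MDP $\MDP_B$ with a point-based approximation that preserves supports. The revealing property plays the same role as in Lemma~\ref{Result: n-stage Safe value}: at every stage the belief collapses to a Dirac belief with probability at least $p_{\min}$. All approximation errors can therefore be confined to the stretches between two revelations. We work on the Dirac beliefs $\delta_k$, which lie exactly on every grid $\G_\ell$.

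\textbf{Step 1 (qualitative preprocessing).} In $\MDP_B$, compute two sets of belief-supports:
\begin{itemize}
\item $\Q_0$, the supports from which $\X$ cannot be reached under any strategy; this is a graph computation.
\item $\Q_1$, the supports from which $\X$ is reached almost surely; this uses the EXPTIME almost-sure analysis of Belly et al.
\end{itemize}
On $\Q_0$ the value is $0$ and on $\Q_1$ it is $1$, so both sets are made absorbing with these values. Both computations are polynomial in $|\beliefsupport|=2^{|\K|}$. I would then argue that, outside $\Q_0\cup\Q_1$, every end-component of $\MDP_B$ avoiding $\X$ contributes nothing to reachability. Any strategy can therefore be modified, at arbitrarily small loss, to leave such an end-component (or ``give up'' into $\Q_0$) within a bounded number of revelation epochs. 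This is the analogue of the classical MDP argument that collapses end-components before solving reachability.

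\textbf{Step 2 (finite-horizon truncation).} The next step is a ``decision horizon'' bound. There is $N=2^{\mathrm{poly}(|\K|,\log(1/p_{\min}),\log(1/\eps))}$ such that some $\eps/2$-optimal strategy, with probability at least $1-\eps/2$, has either hit $\X$ or entered $\Q_0\cup\Q_1$ within $N$ stages. Consequently $|v_R-v_{R,N}|\le\eps/2$, where $v_{R,N}$ is the $N$-stage reachability value with terminal reward $1$ on $\Q_1$ and $0$ elsewhere.

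To obtain $N$, I would reuse the trial/epoch technique of Lemma~\ref{Result: n-stage Safe value}. Revelations occur in windows of length $\ell_\eps$ with probability at least $1-\eps$. From a revealed state $\delta_k$ with $\{k\}\notin\Q_0\cup\Q_1$, a path of length at most $|\K|$ either reaches $\X$ or $\Q_1$, or it witnesses that the current strategy is sitting in a zero-contribution end-component. In the latter case we switch to a losing action, which costs at most the residual probability. The result is a geometric decay with rate $1-p_{\min}^{|\K|}$ per trial, exactly as in \eqref{equation: alternative reset bound}.

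\textbf{Step 3 (point-based backup and complexity).} Run the Bellman recursion for $v_{R,N}$ on the support-preserving grid $\G_\ell$, using the rounding map $\Pi_\ell$ with $\ell=\lceil 2(N+1)|\K|/\eta\rceil$. The error analysis is the same as \eqref{equation: projected dynamic programming error}: the finite-horizon reachability value is $1$-Lipschitz in $\|\cdot\|_1$ on beliefs with a fixed support, by the same linearity argument as \eqref{equation: finite horizon safe Lipschitz}. The error is therefore at most $\eta$ after $N$ backups. The grid has at most $2^{|\K|}\ell^{|\K|}$ points, which gives running time $2^{O(|\K|\log(N|\K|/\eta))}|\I||\S|$, i.e.\ exponential.

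\textbf{Main obstacle.} I expect Step 2 to be the hardest part: bounding uniformly the time by which near-optimal strategies have ``decided''. Unlike the long-run average setting, reachability rewards strategies that wait arbitrarily long. The delicate point is to show that a strategy which neither reaches $\X$ nor enters $\Q_0\cup\Q_1$ for many epochs can be cut off without losing more than $\eps$. The first approach I would try is to work with the finite MDP over revealed states $\{\delta_k\}$, whose transitions are the epoch-to-epoch laws. In that MDP, any probability mass that stays outside $\Q_0\cup\Q_1$ forever must concentrate on end-components in which $\X$ is not reached. That mass contributes $0$ to the objective, so truncating it is free up to the geometric tail.
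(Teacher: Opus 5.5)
The paper gives no proof of this statement. It is quoted from Asadi et al.~\cite{asadi2026revealing} and used as a black box.

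The computations of $\Q_0,\Q_1$ in Step~1 and the grid argument of Step~3 are fine. However, Step~3 costs $2^{O(|\K|\log(N|\K|/\eta))}$, which is exponential only if $\log N$ is polynomial. So everything rests on the decision horizon of Step~2, and your sketch for it does not work.

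End-components outside $\Q_0\cup\Q_1$ are not `zero-contribution'. Only mass that stays in one \emph{forever} contributes nothing; the end-component itself has a value $c>0$, collected by eventually exiting. (For maximal $\C$, the reset strategies in the proof of Lemma~\ref{Result: n-stage Safe value} reach every $\delta_k\in\D_\C$ almost surely from any $b\in\Delta_\C$. Since also $v_R(b)\le\sum_k b(k)v_R(\delta_k)$, you get $v_R\equiv c$ on $\Delta_\C$.) Switching to a losing action therefore forfeits $c$ times the mass still inside. That loss is small only if a near-optimal strategy has already exited, which is the very claim to be proved.

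Likewise, a path of length at most $|\K|$ towards $\X$ or $\Q_1$ only certifies positive reachability. Following it is a gamble, so the rate $1-p_{\min}^{|\K|}$ per trial applies to that path, not to near-optimal play. The bound \eqref{equation: alternative reset bound} controls the time to reach a prescribed Dirac belief, not the time at which a good exit is taken.

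This is exactly where the MDP analogy breaks. In a finite MDP, the exits from an end-component are finitely many state--action pairs, each reached almost surely in expected time $O(|S|\,p_{\min}^{-|S|})$. In $\MDP_B$, the value of exiting with action $i$ at belief $b$, namely $\sum_s\PP(s\given b,i)\,v_R(\Phi(b,i,s))$, depends on $b$ and not only on $\supp(b)$. Nothing in your argument excludes that good exits are only approached through non-Dirac beliefs generated by long histories of tiny probability. In that case $N$ need not be $2^{\mathrm{poly}}$, and the grid of Step~3 becomes doubly exponential.

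Your fallback, the `MDP over revealed states', has the same problem. Its actions are inter-revelation plans of unbounded length, so it is not a finite MDP, and bounding both the epoch length and the number of epochs is the missing statement itself. Closing the gap would require, for instance, showing that exits can be taken at Dirac beliefs with no loss in value, and controlling post-exit posteriors that mix states inside and outside the end-component. That is the substantive content the paper delegates to~\cite{asadi2026revealing}.
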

        
    \paragraph{Approximation of Commit POMDPs}
        The Commit POMDP and an Approximate Commit POMDP differ only in the transition probabilities associated with the commit actions.
        The next lemma shows that the reachability value is Lipschitz in the commit weights.
        
        \begin{Lemma}\label{Results: approximating reachability in Commit POMDP}
            Consider a revealing POMDP $\POMDP$, $w\in[0,1]^{\K}$, and $\eps>0$ such that
            $\max_{k\in\K}|w(k)-v_{\safe}(k)|\leq\eps$.
            Then, for every $b_1\in \Delta(\K)$,
            \[
                \left|v_{R}'\left(b_1'\right) - v_{R,w}'\left(b_1'\right)\right| \le \eps.
            \]
        \end{Lemma}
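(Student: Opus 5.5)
The plan is a strategy-by-strategy comparison. The two POMDPs $\POMDP'$ and $\POMDP'[w]$ have the same action and signal sets, hence the same strategy set $\Sigma'$. It therefore suffices to show that, for every fixed $\sigma\in\Sigma'$,
\[
    \left|\PP_{\sigma}^{b_1'}(\exists m\colon K_m'=\top)-\PP_{\sigma}^{w,b_1'}(\exists m\colon K_m'=\top)\right|\le\eps.
\]
Taking suprema over $\sigma$ on both sides then gives the claim, using $|\sup_\sigma x_\sigma-\sup_\sigma y_\sigma|\le\sup_\sigma|x_\sigma-y_\sigma|$.

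Fix $\sigma$. The key observation is that the two processes coincide until the first commit action is played. Before that stage, the transitions and signals only involve the original matrices $P(i,s)$, which are identical in both models. Moreover, $\top$ and $\bot$ can only be entered through a commit action.

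Let $T_\com$ be the first stage at which $\sigma$ selects an action in $\I_\com$. Since $\sigma$ depends only on the observed history, the joint law of $(H_{T_\com},K_{T_\com},I_{T_\com})$ restricted to $\{T_\com<\infty\}$ is the same under $\PP_\sigma^{b_1'}$ and $\PP_\sigma^{w,b_1'}$. On $\{T_\com=\infty\}$ the state $\top$ is never reached in either model. At stage $T_\com$ the action $\com_k$ is played from some state $\another{k}\in\K$, and the process is absorbed at once in $\top$ or $\bot$, after which nothing further matters. The state $\top$ is reached with probability $v_\safe(k)\mathbf 1_{\{\another{k}=k\}}$ in $\POMDP'$ and with probability $w(k)\mathbf 1_{\{\another{k}=k\}}$ in $\POMDP'[w]$.

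We condition on the common pre-commit law. Writing $(\another{K},k)$ for the random pair of current state and committed index at $T_\com$, the two reachability probabilities are
\[
    \EE\left(\mathbf 1_{\{T_\com<\infty\}}\mathbf 1_{\{\another{K}=k\}}\,v_\safe(k)\right)
    \qquad\text{and}\qquad
    \EE\left(\mathbf 1_{\{T_\com<\infty\}}\mathbf 1_{\{\another{K}=k\}}\,w(k)\right),
\]
where both expectations are taken with respect to the same measure. Their difference is bounded by $\max_k|v_\safe(k)-w(k)|\cdot\PP(T_\com<\infty)\le\eps$.

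The only step requiring care is the formal justification that the pre-commit laws coincide. This can be done by induction on $m$: for every history $h_m'$ containing no commit action and every state $k\in\K$, the probabilities $\PP_\sigma(H_m'=h_m',K_m'=k)$ agree in the two models, because the one-step kernels agree on non-commit actions. Summing over the disjoint events $\{T_\com=m\}$ then gives the result.
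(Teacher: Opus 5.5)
Your proof is correct and follows essentially the same route as the paper. You compare strategy by strategy, use that $\POMDP'$ and $\POMDP'[w]$ share the same pre-commit law, and bound the difference of the commit-stage success probabilities by $\eps\,\PP(T_\com<\infty)$. The only cosmetic difference is that you keep the hidden state at $T_\com$ in the common law, whereas the paper integrates it out and writes the success probability as $B'_{T_\com}(k)\,v_\safe(k)$ using the belief; the two are equivalent by the tower rule.
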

        
        \begin{proof}[Proof of Lemma \ref{Results: approximating reachability in Commit POMDP}]
            Consider the Commit POMDP $\POMDP'$ and the Approximate Commit POMDP $\POMDP'[w]$.
            Define the first commit time by
            \[
                T_{\com}\defas\inf\left\{m\in \NN^*\colon\; I_m'\in \I_{\com}\right\},
            \]
            with $T_{\com}=\infty$ if no commit action is ever played.
            On the event $\{T_{\com}=\infty\}$, define $I_{T_{\com}}'$ and $B_{T_{\com}}'$ arbitrarily.
            Observe that the two POMDPs have identical dynamics before the first commit action.
            Therefore, given a strategy $\sigma$, for every $m\in\NN^*$, the variables $(H_m',I_m',B_m')$ restricted to $\{T_{\com}=m\}$ have the same law under $\PP_{\sigma}^{b_1'}$ and $\PP_{\sigma}^{w,b_1'}$.
            Moreover, on the event $\{T_{\com}=m,I_{T_{\com}}'=\com_{k}\}$, the conditional probability of reaching the target state $\top$ at stage $m+1$ in $\POMDP'$ is
            \begin{align*}
                &\PP_{\sigma}^{b_1'}\left(K_{m+1}'=\top\givenm T_{\com}=m,H_m',I_m'=\com_k\right)\\
                    &\qquad = \sum_{\another{k}\in \K}\PP_{\sigma}^{b_1'}\left(K_m' = \another{k} \givenm T_{\com} = m,H_m',I_m'=\com_k\right)p'(\top,\top\given \another{k},\com_k)\\
                    &\qquad = \sum_{\another{k}\in \K}B_m'(\another{k})v_{\safe}(k)\mathbf{1}_{\{\another{k}=k\}}\\
                    &\qquad = B_m'(k)v_{\safe}(k).
            \end{align*}
            In the Commit POMDP $\POMDP'$, $\top$ can only be reached after a commit action, and $\top,\bot$ are absorbing
            \begin{align}
                \PP_{\sigma}^{b_1'}(\exists m\in\NN^*\colon\; K_m'=\top) = \EE_\sigma^{b_1'}\left(\mathbf{1}_{\{T_{\com}<\infty\}}\sum_{k\in \K}\mathbf{1}_{\left\{I_{T_{\com}}'=\com_k\right\}}B_{T_{\com}}'(k)v_{\safe}(k)\right).
                \label{equation: reachability explicit}
            \end{align}
            Similarly, the same argument in the Approximate Commit POMDP $\POMDP'[w]$ gives
            \begin{align}
                \PP_{\sigma}^{w,b_1'}(\exists m\in\NN^*\colon\; K_m'=\top) = \EE_\sigma^{w,b_1'}\left(\mathbf{1}_{\{T_{\com}<\infty\}}\sum_{k\in \K}\mathbf{1}_{\left\{I_{T_{\com}}'=\com_k\right\}}B_{T_{\com}}'(k)w(k)\right).
                \label{equation: reachability explicit2}
            \end{align}     
            Denoting by $\EE$ the expectation with respect to their common pre-commit law, we obtain that
            \begin{align*}
                &\left|\PP_{\sigma}^{b_1'}(\exists m\in\NN^*\colon\; K_m'=\top)-\PP_{\sigma}^{w,b_1'}(\exists m\in\NN^*\colon\; K_m'=\top)\right|\\
                    &\qquad=
                        \left|\EE\left(\mathbf{1}_{\{T_{\com}<\infty\}}\sum_{k\in \K}\mathbf{1}_{\left\{I_{T_{\com}}'=\com_k\right\}}B_{T_{\com}}'(k)\left(v_{\safe}(k)-w(k)\right)\right)\right|\\
                    &\qquad\le
                        \EE\left(\mathbf{1}_{\{T_{\com}<\infty\}}\sum_{k\in \K}\mathbf{1}_{\left\{I_{T_{\com}}'=\com_k\right\}}B_{T_{\com}}'(k)\left|v_{\safe}(k)-w(k)\right|\right)\\
                    &\qquad\le
                        \EE\left(\mathbf{1}_{\{T_{\com}<\infty\}}\sum_{k\in \K}\mathbf{1}_{\left\{I_{T_{\com}}'=\com_k\right\}}B_{T_{\com}}'(k)\eps\right)\\
                    &\qquad\leq \eps,
            \end{align*}
            where the equality follows from equations \eqref{equation: reachability explicit} and \eqref{equation: reachability explicit2}, the first inequality follows from the triangle inequality, the second inequality follows from the assumption on $w$, and the last inequality follows from $B_{T_\com}'\in \Delta(\K')$.
            Taking the supremum over strategies leads to
            \[
                \left|v_R'(b_1')-v_{R,w}'(b_1')\right|\leq \eps,
            \]
            which concludes the proof.
        \end{proof}
    
    \paragraph{Previous result}
        The next lemma follows from~\cite[Lemma 5.3, p. 109]{chatterjee2022finite} and~\cite[Lemma 33, p. 2004]{venel2016strong}, where it is originally stated for the expected liminf average objective. 
        
        \begin{Lemma}
            \label{result: CSZ theorem}
            Consider a POMDP $\POMDP$.
            Then, for every initial belief $b_1\in \Delta(\K)$ and $\eps>0$, there exist $m_\eps\in \NN^*$, a strategy $\sigma_\eps\in \Sigma$, and a random belief $B^*\in \Delta(\K)$, determined by the history up to stage $m_\eps$, such that 
            \begin{itemize}
                \item 
                    $\PP_{\sigma_\eps}^{b_1}\left(\left\|B_{m_\eps}-B^*\right\|_1\leq \eps\right)\geq 1-\eps$.
                \item
                    For every realization $b^*$ of $B^*$, there exists a strategy $\sigma_{b^*}\in\Sigma$ such that, for every $k\in\supp(b^*)$,
                    \[
                        \dfrac{1}{n}\sum_{m=1}^nG_m\xrightarrow[n\to\infty]{}\gamma(\delta_k,\sigma_{b^*})\qquad\PP_{\sigma_{b^*}}^{\delta_k}\text{-almost surely}.
                    \]
                    Moreover, $\gamma(b^*,\sigma_{b^*})=v(b^*)$ and $\EE_{\sigma_\eps}^{b_1}\left(v(B^*)\right)\geq v(b_1)-\eps$.
            \end{itemize}
        \end{Lemma}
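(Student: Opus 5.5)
The plan is to obtain the statement by reading the two cited results in the formulation used here and gluing them together, rather than by proving anything from scratch. The first step is to rewrite $v(b_1)$ using Remark~\ref{remark: values coincides}. This gives $v(b_1)=\sup_{\sigma}\EE_\sigma^{b_1}(\liminf_n \frac1n\sum_{m=1}^n G_m)$, which is the expected-liminf formulation in which \cite[Lemma 33]{venel2016strong} and \cite[Lemma 5.3]{chatterjee2022finite} are stated. Every quantity produced by those lemmas can then be compared with the paper's $\gamma$ and $v$.

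Next I would invoke \cite[Lemma 33]{venel2016strong}, applied with a smaller error $\eps/2$. It provides a strategy $\sigma_\eps$, a stage $m_\eps$, and a random belief $B^*$, measurable with respect to $H_{m_\eps}$. This random belief is $\eps/2$-close in $\|\cdot\|_1$ to $B_{m_\eps}$ outside an event of probability at most $\eps/2$. Its realizations are beliefs from which the optimal expected-liminf payoff is attained, up to the stated error, and it satisfies $\EE_{\sigma_\eps}^{b_1}(v(B^*))\ge v(b_1)-\eps$. The first bullet then follows directly once the constants are matched. For the last inequality, if the cited lemma only bounds $\EE(v(B_{m_\eps}))$, I would transfer the bound to $B^*$ using that $v$ is $1$-Lipschitz in $\|\cdot\|_1$ (payoffs lie in $[0,1]$). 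The error on the small exceptional event is bounded by its probability.

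For each realization $b^*$, I would then take the strategy $\sigma_{b^*}$ given by \cite[Lemma 5.3]{chatterjee2022finite}. Under $\PP^{b^*}_{\sigma_{b^*}}$, the averages $\frac1n\sum_{m=1}^n G_m$ converge almost surely to a limit whose conditional expectation given the initial state depends only on that state. The key transfer uses the decomposition $\PP^{b^*}_{\sigma}=\sum_{k}b^*(k)\PP^{\delta_k}_{\sigma}$. An event of $\PP^{b^*}$-probability one therefore has $\PP^{\delta_k}$-probability one for every $k\in\supp(b^*)$, because $b^*(k)>0$. Dominated convergence, valid since averages lie in $[0,1]$, identifies the almost-sure limit with $\lim_n\EE^{\delta_k}_{\sigma_{b^*}}(\frac1n\sum G_m)=\gamma(\delta_k,\sigma_{b^*})$; in particular this expected limit exists. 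By linearity, $\gamma(b^*,\sigma_{b^*})=\sum_k b^*(k)\gamma(\delta_k,\sigma_{b^*})$, and equality with $v(b^*)$ follows from optimality in the cited lemma together with Remark~\ref{remark: values coincides}.

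The main obstacle is the claim that the pathwise limit under $\PP^{\delta_k}_{\sigma_{b^*}}$ is a deterministic number, rather than merely a random variable. Almost-sure convergence alone gives only a random limit $L$, whereas the statement asserts $L=\gamma(\delta_k,\sigma_{b^*})$ almost surely. I would first try to extract this from the structure in \cite{chatterjee2022finite}, where $\sigma_{b^*}$ induces a finite Markov chain on (state, memory) pairs. From each initial state the play then enters a recurrent class in which the average reward is constant. If needed, I would refine $B^*$ by conditioning on the recurrent class reached, absorbing the resulting cost into the $\eps$ slack of the first bullet. On each refined support point the limit is then constant, and in the worst case equal to the expectation.
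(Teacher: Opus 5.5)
Your outer skeleton matches the paper's: pass to the expected-liminf formulation via Remark~\ref{remark: values coincides}, decompose $\PP_{\sigma}^{b^*}=\sum_k b^*(k)\PP_{\sigma}^{\delta_k}$, identify the limit with $\gamma(\delta_k,\sigma_{b^*})$ by dominated convergence, and use linearity over the finite support. However, the step you flag as the main obstacle is left open, and the repair you propose does not work.

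The paper never has to manufacture a deterministic limit. It reads off from \cite[Lemma 5.3]{chatterjee2022finite}, in one statement and for one and the same $B^*$, the following. For every realization $b^*$ and every $k\in\supp(b^*)$, the averages converge $\PP_{\sigma_{b^*}}^{\delta_k}$-almost surely to the constant $\overline{\gamma}(\delta_k,\sigma_{b^*})=\EE_{\sigma_{b^*}}^{\delta_k}\bigl(\liminf_{n}\tfrac1n\sum_{m=1}^n G_m\bigr)$. The same statement gives $\overline{\gamma}(b^*,\sigma_{b^*})=\overline{v}(b^*)$ and $\EE_{\sigma_\eps}^{b_1}(\overline{v}(B^*))\geq\overline{v}(b_1)-\eps$. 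With that reading, your dominated-convergence step gives $\overline{\gamma}(\delta_k,\sigma_{b^*})=\gamma(\delta_k,\sigma_{b^*})$, linearity gives $\overline{\gamma}(b^*,\sigma_{b^*})=\gamma(b^*,\sigma_{b^*})$, and $\overline{v}=v$ finishes the proof.

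For the same reason, you should not take $(m_\eps,\sigma_\eps,B^*)$ from \cite[Lemma 33]{venel2016strong} and then invoke the other lemma ``at each realization''. Exact optimality $\gamma(b^*,\sigma_{b^*})=v(b^*)$ with per-state deterministic limits is a property of the particular beliefs produced by that construction; optimal strategies need not exist from an arbitrary belief. All the objects must therefore come from a single statement.

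Your fallback fails for structural reasons. Whether the limit is deterministic under $\PP_{\sigma_{b^*}}^{\delta_k}$ is a property of the pair $(k,\sigma_{b^*})$ only, not of the weights $b^*$ puts on states. Suppose that from a fixed $k$ the play can end in two recurrent classes with different average rewards. That randomness is generated by transitions after stage $m_\eps$, and no modification of the belief over the current state removes it, short of deleting $k$ from the support, which is not a small perturbation in general.

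Conditioning on ``the recurrent class reached'' means conditioning on a future, unobserved event, which has three consequences:
\begin{itemize}
\item The refined object is not determined by $H_{m_\eps}$.
\item The conditioned continuation is in general not of the form $\PP_{\sigma}^{\delta_k}$ for any strategy $\sigma$.
\item The conditional law of the current state can be far from $B_{m_\eps}$ on events of non-negligible probability. For example, take two states, $B_{m_\eps}$ uniform, each state feeding a different class; conditioning yields Dirac beliefs at $\ell_1$-distance $1$ from $B_{m_\eps}$, each with probability $1/2$.
\end{itemize}
So the cost cannot be absorbed into the $\eps$ of the first bullet. Finally, your finite (state, memory) chain presupposes that $\sigma_{b^*}$ has finite memory, which is not part of the statement you are citing.
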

    
        For completeness, the proof of Lemma~\ref{result: CSZ theorem} is deferred to Appendix~\ref{Appendix: Proof of Lemma result: CSZ theorem}.
    
    \paragraph{Lower bound on the reachability value}
        Lemma~\ref{result: CSZ theorem} provides a strategy whose long-run average converges almost surely from every state in the support of $B^*$.
        The next lemma converts such a strategy into a strategy of the Commit POMDP that reaches the target state $\top$ with probability at least the corresponding average of its almost-sure limits.
        
        \begin{Lemma}
            \label{Lemma: connect almost sure limit and reachability}
            Consider a revealing POMDP $\POMDP$ and its Commit POMDP $\POMDP'$.
            For every pair of initial beliefs $b,b^*\in \Delta(\K)$ and for every strategy $\sigma^*\in \Sigma$ satisfying that, for every $k\in \supp(b^*)$, 
            \begin{align}
                \dfrac{1}{n}\sum_{m=1}^nG_m\xrightarrow[n\to\infty]{}\gamma(\delta_k,\sigma^*)\qquad \PP_{\sigma^*}^{\delta_k}\text{-almost surely},
                \label{equation: limit almost-sure}
            \end{align}
            there exists a strategy $\sigma'\in \Sigma'$ such that
            \[
                \PP_{\sigma'}^{b'}(\exists m\in \NN^*\colon\; K_m'=\top)\ge \sum_{k\in \supp(b^*)}b(k)\gamma(\delta_k,\sigma^*).
            \]
        \end{Lemma}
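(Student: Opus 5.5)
The plan is to play $\sigma^*$ inside $\POMDP'$ from $b'$ and commit at a stopping time chosen by a martingale criterion. The bound will then follow by optional stopping, which gives the inequality exactly rather than up to an arbitrary $\eta>0$.

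\textbf{Setup.} Run $\sigma^*$ from $b'$. Since $\POMDP'$ coincides with $\POMDP$ before any commit action, the pre-commit law is $\PP_{\sigma^*}^{b}$. Let $A\defas\{K_1\in\supp(b^*)\}$. Conditioning on $K_1=k$ and using \eqref{equation: limit almost-sure}, the limit $Z\defas\lim_n\frac1n\sum_{m=1}^nG_m$ exists almost surely on $A$. Set $\widetilde Z\defas Z\mathbf 1_A\in[0,1]$. Then
\[
\EE_{\sigma^*}^{b}(\widetilde Z)=\sum_{k\in\supp(b^*)}b(k)\gamma(\delta_k,\sigma^*),
\]
by dominated convergence. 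Define the bounded martingale $Y_m\defas\EE_{\sigma^*}^{b}(\widetilde Z\given H_m)$. It is a deterministic function of the observed history, because $b$ and $\sigma^*$ are fixed. By Lévy's upward theorem, $Y_m\to\widetilde Z$ almost surely.

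\textbf{Key step (main obstacle).} I need to show that, almost surely, there are infinitely many stages $m$ with $B_m=\delta_k$ and $v_{\safe}(k)\ge \widetilde Z$.

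Let $(Q_m,I_m)$ be the belief-support/action process. The pairs visited infinitely often form, almost surely, an end-component of $\MDP_B$; this is the standard finite-MDP argument. Let $\M_\infty$ be the maximal end-component containing it. By \eqref{equation: alternative one-step revelation} together with Proposition~\ref{Result: revelation in end-component}, Dirac beliefs $\delta_k$ with $\{k\}$ in $\M_\infty$ are hit infinitely often. By Lemma~\ref{Result: Independence of the Safe Value} and \eqref{equation: safe value over maximal end-components}, each such $k$ satisfies $v_{\safe}(k)\ge v_{\M_\infty}$. It therefore suffices to prove $\widetilde Z\le v_{\M_\infty}$ almost surely.

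To prove this, fix a maximal end-component $\M$ and a stage $m$. Let $\sigma^{\M,m}$ follow $\sigma^*$ after $H_m$ as long as the support/action pairs stay in $\M$, and switch to uniform $\M$-safe actions at the first exit. This strategy is $\M$-safe from $B_m$ whenever the pair at stage $m$ lies in $\M$. Let $E_{m}$ be the event that the process stays in $\M$ from stage $m$ on. On $E_m$, the averages under $\sigma^*$ and under $\sigma^{\M,m}$ coincide. Applying Lemma~\ref{Result: Approximation of the Safe Value} blockwise (as in its second half), together with Fatou's lemma applied to $\limsup$ of the block averages, gives
\[
\EE(\widetilde Z\mathbf 1_{E_m}\given H_m)\le v_\M+\PP(E_m^c\cap\{\M_\infty=\M\}\given H_m)+\text{(terms vanishing as }m\to\infty).
\]
Since $E_m\uparrow\{\M_\infty=\M\}$ up to null sets, Lévy's theorem applied to both sides yields $\widetilde Z\mathbf 1_{\{\M_\infty=\M\}}\le v_\M$ almost surely. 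Getting this conditioning right is the delicate point: the event $E_m$ depends on the future, so it cannot be built into a strategy. That is why I compare against the modified strategy and only then pass to the limit.

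\textbf{Strategy and conclusion.} Define $\sigma'$ to play $\sigma^*$ until the first stage $\tau$ at which $B_\tau=\delta_k$ for some $k$ with $v_{\safe}(k)\ge Y_\tau$, and then to play $\com_k$.

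Because $Y_m\to\widetilde Z$ and, by the key step, revealed states with $v_{\safe}(k)\ge\widetilde Z$ recur infinitely often, we get $\tau<\infty$ almost surely. This needs care when $v_{\safe}(k)=\widetilde Z$ exactly. In that case I would use the criterion $v_{\safe}(k)\ge Y_\tau-2^{-j}$ at the $j$-th candidate revelation and telescope the slack away, or alternatively observe that $Y_m\le v_{\M_\infty}$ eventually follows from the same conditional bound applied to $Y_m$ itself.

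By \eqref{equation: reachability explicit}, the reachability probability equals $\EE(v_{\safe}(K_\tau))\ge\EE(Y_\tau)$. Optional stopping for the bounded martingale $(Y_m)$ at the almost surely finite time $\tau$ gives $\EE(Y_\tau)=\EE(\widetilde Z)$. This is exactly the claimed bound.
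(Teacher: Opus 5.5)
There is a genuine gap in your key step. You claim that the (support, action) pairs visited infinitely often under $\sigma^*$ almost surely form an end-component of $\MDP_B$ ``by the standard finite-MDP argument''. That argument needs every transition $q\to q'\in\Post(q,i)$ to occur with probability bounded away from zero each time $(q,i)$ is played. For the support process of a POMDP, this probability is $\sum_{s\colon\psi(q,i,s)=q'}\PP(s\given b,i)$. It is only bounded below by $\min_{k\in q}b(k)\,p_{\min}$, and belief weights become exponentially small after long non-revealing stretches. A history-dependent $\sigma^*$ can exploit this even in a revealing POMDP. In the $n$-th excursion, let it play a leaky action $i$ at $q$ only after about $\log_2 n$ non-revealing signals, each of which halves the weight of the leaking state. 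Then $i$ is played with probability of order $1/n$ per excursion, hence infinitely often. But it leaks with probability of order $1/n^2$, hence with positive probability never. On that event the infinitely-often set is not closed, $\M_\infty$ is undefined, and every $E_m$ fails. Your bound on $\EE(\widetilde Z\mathbf 1_{E_m}\given H_m)$ therefore gives no information, and $\tau<\infty$ almost surely is not established.

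There are two further problems. First, Lévy's upward theorem gives $Y_m\to\EE(\widetilde Z\given\mathcal F_\infty)$, not $\widetilde Z$, because $\mathbf 1_A$ depends on the hidden $K_1$, which need never be learned; this one is repairable. Second, the tie case is not resolved. The $2^{-j}$ slack only yields the bound up to an arbitrary $\eta$, which contradicts your claim of exactness. The alternative ``$Y_m\le v_{\M_\infty}$ eventually'' fails for a general bounded $\widetilde Z$: a vanishing chance of later reaching a better component keeps $Y_m$ strictly above $v_{\M_\infty}$ forever. What rescues it is a hypothesis you never use, namely that the limit from each $\delta_k$ is a \emph{constant}. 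After a revealing history, the continuation law does not depend on the initial state, so all consistent $k\in\supp(b^*)$ share one constant $c$, and afterwards $Y_m=c\,\PP(A\given H_m)\le c$.

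The paper uses exactly this and never analyses $\sigma^*$ asymptotically. At the first revelation it abandons $\sigma^*$ and steers, from Dirac beliefs, along explicit paths into the minimal closed sets $\Q_n$ of the support graph generated by $\sigma^*[h_m]$. Each attempt succeeds with probability at least $p_{\min}^{L}$. In each $\Q_n$, the continuation of $\sigma^*$ is $\C_n$-safe with limit $c$, so Lemma~\ref{Result: Independence of the Safe Value} gives $v_{\safe}(k)\ge c$ for all $\{k\}\in\Q_n$. The strategy then commits at the next revelation.
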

    
        \begin{proof}[Proof of Lemma~\ref{Lemma: connect almost sure limit and reachability}]
            Consider a revealing POMDP $\POMDP$ and its Commit POMDP $\POMDP'$.
            Given $m\in \NN^*$, a history $h_m=(i_1,s_2,\ldots,i_{m-1},s_m)$ is called revealing for $k$ if its last action-signal pair $(i_{m-1},s_m)$ reveals the state $k$.
            Formally, for every $b_1\in \Delta(\K)$ and $\sigma\in \Sigma$ with $\PP_{\sigma}^{b_1}(H_m=h_m)>0$, we have that
            \[
                \PP_{\sigma}^{b_1}(K_m=k\given H_m=h_m)=1.
            \]
            A history is called \emph{revealing} if it is revealing for some state $k\in \K$, and the first revealing time is defined by $T_{\rev}\defas\inf\left\{m\in \NN^*\colon\; H_m\text{ is revealing}\right\}$.
            Fix $b,b^*\in \Delta(\K)$ and $\sigma^*\in \Sigma$ satisfying \eqref{equation: limit almost-sure}.
            For every revealing history $h_m$ with $m\in \NN^*$, define
            \[
                \D(h_m)\defas\left\{k\in \supp(b^*)\colon\; \PP_{\sigma^*}^{\delta_k}(H_m=h_m)>0\right\}.
            \]
    
            \vspace{1em}
            \smallskip\noindent\textit{Independence.}
                Fix a revealing history $h_m$ such that $\D(h_m)\neq \emptyset$ and denote by $k_{h_m}$ the state revealed by $h_m$.
                We prove that, for every pair of states $k,k'\in \D(h_m)$, the following equality holds
                \begin{align}
                    \gamma(\delta_k,\sigma^*)=\gamma(\delta_{k'},\sigma^*).
                    \label{result: almost sure value is indep}
                \end{align}
                Fix a state $k\in \D(h_m)$.
                By definition of $\D(h_m)$, we have that $\PP_{\sigma^*}^{\delta_k}(H_m=h_m)>0$.
                Therefore, conditioning the almost-sure limit in equation~\eqref{equation: limit almost-sure} on the event $\{H_m=h_m\}$ gives
                \[
                    \PP_{\sigma^*}^{\delta_k}\left(\lim_{n\to\infty}\dfrac{1}{n}\sum_{j=1}^nG_j=\gamma(\delta_k,\sigma^*)\givenm H_m=h_m\right) = 1.
                \]
                Since $h_m$ reveals $k_{h_m}$, the conditional law of the play after stage $m$ given the event $\{H_m=h_m\}$ is the law induced by the continuation strategy $\sigma^*[h_m]$ from the revealed belief $\delta_{k_{h_m}}$.
                Moreover, the long-run average does not depend on the rewards of the first $m-1$ stages.
                Therefore, we deduce that
                \begin{align}
                    \PP_{\sigma^*[h_m]}^{\delta_{k_{h_m}}}\left(\lim_{n\to\infty}\dfrac{1}{n}\sum_{j=1}^nG_j=\gamma(\delta_k,\sigma^*)\right)=1.
                    \label{equation: limit after revealing history}
                \end{align}
                Fix another state $k'\in \D(h_m)$.
                The probability measure in equation~\eqref{equation: limit after revealing history} does not depend on $k$.
                Hence, equation~\eqref{equation: limit after revealing history} holds for both $k$ and $k'$, i.e., the long-run average converges $\PP_{\sigma^*[h_m]}^{\delta_{k_{h_m}}}$-almost surely to $\gamma(\delta_k,\sigma^*)$ and to $\gamma(\delta_{k'},\sigma^*)$.
                Therefore, we get that $\gamma(\delta_k,\sigma^*)=\gamma(\delta_{k'},\sigma^*)$.
    
            \vspace{1em}
            \smallskip\noindent\textit{Belief-support end-component.}
                We construct end-components of the BS-MDP from the belief-supports reachable from $\delta_{k_{h_m}}$ under the continuation strategy $\sigma^*[h_m]$.
                For every history $h_{m'}\in \H_{m'}$ with $m'\in \NN^*$ such that
                \[
                    \PP_{\sigma^*[h_m]}^{\delta_{k_{h_m}}}(H_{m'}=h_{m'})>0,
                \]
                let $b_{h_{m'}}$ be the belief induced from $\delta_{k_{h_m}}$ by $h_{m'}$ under $\sigma^*[h_m]$.
                Define the set of reachable belief-supports from $\delta_{k_{h_m}}$ under the continuation strategy $\sigma^*[h_m]$ by
                \[
                    \Q_{h_m}\defas\left\{q\in \beliefsupport\colon\; \exists h_{m'}\in \H_{m'} \text{ with } m'\in \NN^* \text{ s.t. } \PP_{\sigma^*[h_m]}^{\delta_{k_{h_m}}}(H_{m'}=h_{m'})>0\text{ and } q = \supp\left(b_{h_{m'}}\right)\right\}.
                \]
                For every $q\in \Q_{h_m}$, define
                \begin{align*}
                    \E_{h_m}(q)\defas\left\{i\in \I\colon\;\exists h_{m'}\in \H_{m'}\text{ with }m'\in \NN^*\right.
                    &\left.\text{ such that }\right.\\
                    &\left.\PP_{\sigma^*[h_m]}^{\delta_{k_{h_m}}}(H_{m'}=h_{m'})>0,\right.\\
                    &\left.\supp(b_{h_{m'}})=q,\right.\\
                    &\left.\text{and }\sigma^*[h_m\times h_{m'}](i)>0\right\}.
                \end{align*}
                We have that $\{k_{h_m}\}\in \Q_{h_m}$ and $\E_{h_m}(q)\neq \emptyset$ for every $q\in \Q_{h_m}$.
    
                We prove that $(\Q_{h_m},\E_{h_m})$ is closed in the BS-MDP.
                Indeed, fix $q\in \Q_{h_m}$, $i\in \E_{h_m}(q)$, and $q'\in \Post(q,i)$.
                By definition of $\E_{h_m}(q)$, there exists $h_{m'}\in \H_{m'}$ with $m'\in \NN^*$ such that $\PP_{\sigma^*[h_m]}^{\delta_{k_{h_m}}}(H_{m'}=h_{m'})>0$, $\supp(b_{h_{m'}})=q$, and $\sigma^*[h_m\times h_{m'}](i)>0$.
                Since $q'\in \Post(q,i)$, there exists a signal $s\in \S$ such that $q'=\psi(q,i,s)$.
                By definition of $\psi$, there exist $k\in q$ and $k'\in q'$ such that $p(k',s\given k,i) > 0$.
                Since $\supp(b_{h_{m'}})=q$, we have that $b_{h_{m'}}(k)>0$.
                Therefore, we get $\PP_{\sigma^*[h_m]}^{\delta_{k_{h_m}}}(S_{m'+1}=s\given H_{m'}=h_{m'},I_{m'}=i)\ge b_{h_{m'}}(k)p(k',s\given k,i)>0$.
                Since $i\in \E_{h_m}(q)$, we obtain that $\PP_{\sigma^*[h_m]}^{\delta_{k_{h_m}}}(H_{m'+1}=(h_{m'},i,s))>0$.
                Therefore, $(h_{m'},i,s)$ is admissible from $\delta_{k_{h_m}}$ under $\sigma^*[h_m]$.
                Finally, by definition of $\Q_{h_m}$, $q'\in \Q_{h_m}$, which proves that $(\Q_{h_m},\E_{h_m})$ is closed.
    
                Consider the finite directed graph with vertex set $\Q_{h_m}$ and an edge from $q$ to $q'$ whenever there exists an action $i\in \E_{h_m}(q)$ such that $q'\in \Post(q,i)$.
                Since $(\Q_{h_m},\E_{h_m})$ is closed, we have that $\Post(q,i)\subseteq \Q_{h_m}$ for every $q\in \Q_{h_m}$ and $i\in \E_{h_m}(q)$.
                Since $\Q_{h_m}$ is finite, it contains nonempty closed subsets that contain no strictly smaller nonempty closed subset.
                Denote all such subsets by $\Q_1,\ldots,\Q_N$.
                Therefore, for every $n\in [1\until N]$, if $q\in \Q_n$ and $i\in \E_{h_m}(q)$, then $\Post(q,i)\subseteq \Q_n$.
                For every $q\in \Q_n$, define $\E_n(q)\defas\E_{h_m}(q)$ and let $\C_n\defas(\Q_n,\E_n)$.
    
                Given $n\in [1\until N]$, we now prove that $\C_n$ is an end-component of the BS-MDP.
                By the definition of $\Q_n$ and $\E_n$, we have that $\C_n=(\Q_n,\E_n)$ is closed.
                It remains to prove that $\C_n$ is strongly connected.
                Fix $q\in\Q_n$ and consider the set of belief-supports in $\Q_n$ reachable from $q$, denoted by $\R(q)$.
                Since $q\in \R(q)$, we have that $\R(q)\neq \emptyset$.
                Moreover, we have that $\R(q)$ is closed.
                Indeed, fix an arbitrary $\another{q}\in \R(q)$, $i\in \E_n(\another{q})$, and $\another{q}'\in \Post(\another{q},i)$.
                Since $\Q_n$ is closed, $\another{q}'\in \Q_n$ and there is an edge from $\another{q}$ to $\another{q}'$.
                Since $\another{q}$ is reachable from $q$, it follows that $\another{q}'$ is also reachable from $q$.
                Therefore $\another{q}'\in \R(q)$.
                Moreover, if $\R(q)\neq \Q_n$, then $\R(q)$ would be a strict closed subset of $\Q_n$, which contradicts the definition of $\Q_n$.
                Therefore $\R(q)=\Q_n$.
                Since $q\in \Q_n$ was taken arbitrary, we obtain that $\C_n\defas(\Q_n,\E_n)$ is an end-component of the BS-MDP.
                Since the BS-MDP is finite, for every $n\in[1\until N]$ fix a maximal end-component $\M_n$ such that $\C_n\preceq\M_n$.
    
            \vspace{1em}
            \smallskip\noindent\textit{Lower bound on the maximum safe value.}
                Fix $\overline{k}\in \D(h_m)$.
                We prove that, for every $n\in [1\until N]$ and $k\in \K$ such that $\{k\}\in \Q_n$,
                \begin{align}
                    v_{\safe}(k)\ge \gamma(\delta_{\overline{k}},\sigma^*).
                    \label{equation: relation of max-safe value with sigma*}
                \end{align}
                Fix $n\in [1\until N]$ and a history $h_{m'}$ such that $\PP_{\sigma^*[h_m]}^{\delta_{k_{h_m}}}(H_{m'}=h_{m'})>0$ and $\supp(b_{h_{m'}})\in \Q_n$.
    
                We first prove by induction on $m''\in \NN^*$ that, for every history $h_{m''}\in\H_{m''}$ such that
                $\PP_{\sigma^*[h_m\times h_{m'}]}^{b_{h_{m'}}}(H_{m''}=h_{m''})>0$, the induced belief-support $Q$ belongs to $\Q_n$ and every action $i$ satisfying $\sigma^*[h_m\times h_{m'}\times h_{m''}](i)>0$ belongs to $\E_n(Q)$.
                This implies that $\sigma^*[h_m\times h_{m'}]$ is $\C_n$-safe from $b_{h_{m'}}$.
                For the base case, $\H_1=\{\emptyset\}$ and the belief-support is in $\Q_n$.
                Moreover, if $\sigma^*[h_m\times h_{m'}](i)>0$, then, by definition of $\E_{h_m}$, we have that $i\in\E_{h_m}(\supp(b_{h_{m'}}))=\E_n(\supp(b_{h_{m'}}))$.
                For the induction case, suppose that the statement holds at some stage $m''$ and fix $h_{m''+1}=h_{m''}\times (i,s)$ such that
                $\PP_{\sigma^*[h_m\times h_{m'}]}^{b_{h_{m'}}}(H_{m''+1}=h_{m''+1})>0$.
                Consider the belief-support $Q$ induced by $h_{m''}$.
                The positive-probability assumption implies that $\PP_{\sigma^*[h_m\times h_{m'}]}^{b_{h_{m'}}}(H_{m''}=h_{m''})>0$ and $\sigma^*[h_m\times h_{m'}\times h_{m''}](i)>0$.
                Hence, by the induction hypothesis, $Q\in \Q_n$ and $i\in \E_n(Q)$.
                Moreover, the belief-support induced by $h_{m''+1}$ is $Q'=\psi(Q,i,s)\in \Post(Q,i)$.
                Since $\C_n$ is closed, we have that $\Post(Q,i)\subseteq \Q_n$ and thus $Q'\in \Q_n$.
                Finally, we have that
                \[
                    \PP_{\sigma^*[h_m]}^{\delta_{k_{h_m}}}\left(H_{m'+m''}=h_{m'}\times h_{m''+1}\right)>0.
                \]
                Therefore, if $\sigma^*[h_m\times h_{m'}\times h_{m''+1}](\another{i})>0$, then $\another{i}\in \E_{h_m}(Q')=\E_n(Q')$, which proves the induction.
    
                Conditioning \eqref{equation: limit after revealing history} on $\{H_{m'}=h_{m'}\}$ gives the same almost-sure limit under the finite mixture with belief $b_{h_{m'}}$.
                Since $b_{h_{m'}}(\hat{k})>0$ for every $\hat{k}\in\supp(b_{h_{m'}})$, the limit event has probability one under every component of that mixture. Hence,
                for every $\hat{k}\in \supp(b_{h_{m'}})$,
                \begin{align}
                    \dfrac{1}{n}\sum_{j=1}^nG_j\xrightarrow[n\to\infty]{}\gamma(\delta_{\overline{k}},\sigma^*)\qquad\PP_{\sigma^*[h_m\times h_{m'}]}^{\delta_{\hat{k}}}\text{-almost surely}.
                    \label{equation: limit after continuation history}
                \end{align}
                Therefore, the almost-sure limit under $\sigma^*[h_m\times h_{m'}]$ does not depend on the current state $\hat{k}\in \supp(b_{h_{m'}})$.
                We obtain that
                \begin{align*}
                     &v_{\safe}(k)\\
                        &\quad \ge v_{\M_n}(\delta_{k})
                            &&(\text{def. of }v_{\safe})\\
                        &\quad = v_{\M_n}(b_{h_{m'}})
                            &&(\text{by Lemma }\ref{Result: Independence of the Safe Value})\\
                        &\quad \ge \gamma_{\M_n}\left(b_{h_{m'}},\sigma^*[h_{m}\times h_{m'}]\right)
                            &&(\C_n\preceq\M_n\text{ and }\sigma^*[h_{m}\times h_{m'}] \text{ is }\C_n\text{-safe})\\
                        &\quad = \gamma\left(b_{h_{m'}},\sigma^*[h_{m}\times h_{m'}]\right)
                            &&(\text{def. of }\gamma_{\M_n} \text{ and }\gamma)\\
                        &\quad = \sum_{\hat{k}\in\K}b_{h_{m'}}(\hat{k})\gamma\left(\delta_{\hat{k}},\sigma^*[h_{m}\times h_{m'}]\right)
                            &&(\text{linearity and dominated convergence thm})\\
                        &\quad = \sum_{\hat{k}\in\K}b_{h_{m'}}(\hat{k})\gamma(\delta_{\overline{k}},\sigma^*)
                            &&(\text{by Eq. }\eqref{equation: limit after continuation history})\\
                        &\quad = \gamma(\delta_{\overline{k}},\sigma^*).
                            &&(b_{h_{m'}}\in \Delta(\K))
                \end{align*}
                Since $n\in [1\until N]$ was taken arbitrary, the inequality holds for every end-component $\C_n$ with $n\in[1\until N]$.
    
            \vspace{1em}
            \smallskip\noindent\textit{Paths to end-components.}
                Since $\Q_{h_m}$ is finite, every belief-support in $\Q_{h_m}$ has a path to one of the subsets $\Q_1,\ldots,\Q_N$.
                For every belief-support $q\in \Q_{h_m}$ such that $q\notin \bigcup_{n=1}^N\Q_n$, choose a shortest path from $q_0=q$ to $\bigcup_{n=1}^N\Q_n$.
                Formally,
                \[
                    q_0=q,\ldots,q_L
                \]
                with actions $i_0,\ldots,i_{L-1}$ and signals $s_1,\ldots,s_L$ such that $q_L\in \Q_n$ for some $n\in [1\until N]$, $i_a\in \E_{h_m}(q_a)$, $q_{a+1}=\psi(q_a,i_a,s_{a+1})$ for every $a\in [0\until L-1]$.
                Therefore, we have that $L\le |\Q_{h_m}|$.
    
            \vspace{1em}
            \smallskip\noindent\textit{Strategy construction.}
                We construct a strategy $\sigma'\in \Sigma'$ as follows:
                \begin{itemize}
                    \item
                        Until the first revealing history $h_m$ is observed, the strategy $\sigma'$ follows $\sigma^*$ using only actions from $\POMDP$.
                        Fix $k\in \supp(b^*)$.
                        Since $\POMDP$ is revealing, a revealing signal occurs with probability at least $p_{\min}$ at every stage, and thus, under $\PP_{\sigma^*}^{\delta_k}$, a revealing signal is observed almost surely.
                        Consider a revealing history $h_m$ having positive probability under $\PP_{\sigma^*}^{\delta_k}$.
                        By definition of $\D(h_m)$, we have that $k\in \D(h_m)$.
                        Therefore, by equation \eqref{result: almost sure value is indep}, for every $\another{k}\in \D(h_m)$,
                        \[
                            \gamma(\delta_{\another{k}},\sigma^*)=\gamma(\delta_k,\sigma^*).
                        \]
                    \item
                        For every revealing history $h_m$ such that $\D(h_m)\neq \emptyset$, fix $\overline{k}\in \D(h_m)$ and switch to the continuation strategy $\sigma_{h_m}'\in \Sigma'$, defined as follows.
                        Since $h_m$ reveals $k_{h_m}$, the belief at the beginning of $\sigma_{h_m}'$ is $\delta_{k_{h_m}}$, with belief-support $q_0=\{k_{h_m}\}$.
                        If $q_0\in\bigcup_{n=1}^N\Q_n$, then the strategy has already reached an end-component and proceeds as described below.
                        Otherwise, consider the path $q_0,\ldots,q_L$, with actions $i_0,\ldots,i_{L-1}$ and signals $s_1,\ldots,s_L$, fixed above for the belief-support $q_0$.
                        Starting from $q_0$, the strategy $\sigma_{h_m}'$ plays the pure action $i_a$ whenever the current belief-support is $q_a$ and all the previously observed belief-supports in the current attempt were $q_0,\ldots,q_a$.
                        If the next belief-support is $q_{a+1}=\psi(q_a,i_a,s_{a+1})$, the attempt continues; if this occurs for every $a\in[0\until L-1]$, the attempt succeeds and reaches $q_L\in\Q_n$ for some $n\in[1\until N]$.
                        If, after playing $i_a$ at $q_a$, a belief-support different from $q_{a+1}$ is observed, then the attempt fails.
                        Since the action $i_a$ belongs to $\E_{h_m}(q_a)$ and $(\Q_{h_m},\E_{h_m})$ is closed, the new belief-support $q'$ belongs to $\Q_{h_m}$.
                        If $q'\in\bigcup_{n=1}^N\Q_n$, then the strategy has reached an end-component and proceeds as described below; otherwise, it immediately starts the path fixed above for $q'$.
                    \item
                        If the first revealing history $h_m$ satisfies $\D(h_m)=\emptyset$, then the strategy $\sigma'$ plays arbitrarily.
                \end{itemize}
    
                Assume that $\sigma'_{h_m}$ reaches a support in $\Q_n$ and consider the end-component $\C_n=(\Q_n,\E_n)$.
                Until observing a revealing signal, if the current support is $q$, then the strategy chooses an action $i\in \E_{h_m}(q)=\E_n(q)$.
                Since $\C_n$ is closed, this strategy is $\C_n$-safe.
                Moreover, since a revealing signal occurs with probability at least $p_{\min}$ at every stage, a revealing signal is observed almost surely.
                Since the strategy is $\C_n$-safe, if the state $k$ is revealed, then the current belief-support $\{k\}$ is in $\Q_n$.
                Then, the strategy $\sigma'_{h_m}$ plays $\com_k$.
                By definition of the Commit POMDP, \eqref{equation: relation of max-safe value with sigma*} gives $v_{\safe}(k)\ge \gamma(\delta_{\overline{k}},\sigma^*)$.
                Therefore, we obtain
                \begin{align}
                    \PP_{\sigma_{h_m}'}^{\delta_{k_{h_m}}}\left(\exists m'\in \NN^*\colon K'_{m'}=\top\right)\ge \gamma(\delta_{\overline{k}},\sigma^*)
                    \label{equation: reachability after revealing history}
                \end{align}
                After reaching $\top$ or $\bot$, the strategy $\sigma_{h_m}'$ plays arbitrarily.
    
                Define the first time at which a support enters one of the end-components by
                \[
                    T_\C\defas\inf\left\{m\in\NN^*\colon Q_m\in\bigcup_{n=1}^N\Q_n\right\}.
                \]
                We prove that a support in some $\Q_n$ is reached almost surely.
                At every stage before $T_\C$, the current belief-support $q$ belongs to $\Q_{h_m}$ and the strategy chooses an action in $\E_{h_m}(q)$.
                By the revealing property, the next belief is a revealed belief with conditional probability at least $p_{\min}$.
                Suppose that the revealed belief is $\delta_k$.
                If $\{k\}\in\bigcup_{n=1}^N\Q_n$, then an end-component has been reached.
                Otherwise, from the belief-support $\{k\}$, the strategy either continues the remaining part of the path currently being followed, if $\{k\}$ is the expected next belief-support, or starts the path fixed above for $\{k\}$.
                In both cases, denote the resulting path prescribed by the strategy by $q_0=\{k\},\ldots,q_L$, together with its actions $i_0,\ldots,i_{L-1}$ and signals $s_1,\ldots,s_L$.
                We have that $q_L\in\bigcup_{n=1}^N\Q_n$ and $L\le|\Q_{h_m}|$.
                Fix $k_L\in q_L$.
                By definition of $\psi$, there exist $k_0=k,k_1,\ldots,k_{L-1}$ such that $P_{k_a,k_{a+1}}(i_a,s_{a+1})>0$ for every $a\in[0\until L-1]$.
                Therefore, conditionally on the revealed belief $\delta_k$, the probability of following the chosen path and reaching $q_L\in\bigcup_{n=1}^N\Q_n$ is at least
                \[
                    p_{\min}^L\ge p_{\min}^{|\Q_{h_m}|}.
                \]
                It follows that, conditionally on every history before $T_\C$, the probability of reaching a support in $\bigcup_{n=1}^N\Q_n$ within the next $|\Q_{h_m}|+1$ stages is at least $p_{\min}^{|\Q_{h_m}|+1}$.
                Therefore, by induction on $\ell\in\NN$, we have that
                \[
                    \PP_{\sigma'_{h_m}}^{\delta_{k_{h_m}}}\left(T_\C>1+\ell\left(|\Q_{h_m}|+1\right)\right)
                    \le\left(1-p_{\min}^{|\Q_{h_m}|+1}\right)^\ell.
                \]
                Taking the limit as $\ell$ tends to infinity and using continuity from above gives
                \[
                    \PP_{\sigma'_{h_m}}^{\delta_{k_{h_m}}}(T_\C=\infty)
                    \le\lim_{\ell\to\infty}\left(1-p_{\min}^{|\Q_{h_m}|+1}\right)^\ell=0.
                \]
                Therefore, a support in some $\Q_n$ is reached almost surely.
    
                We deduce that
                \begin{align*}
                &\PP_{\sigma'}^{b'}(\exists m'\in\NN^*\colon K'_{m'}=\top)\\
                    &\quad=\sum_{k\in\K}b(k)\PP_{\sigma'}^{\delta_k}(\exists m'\in\NN^*\colon K'_{m'}=\top)
                        &&\text{(conditioning)}\\
                    &\quad\ge\sum_{k\in\supp(b^*)}b(k)\PP_{\sigma'}^{\delta_k}(\exists m'\in\NN^*\colon K'_{m'}=\top)
                        &&\text{(nonnegativity)}\\
                    &\quad=\sum_{k\in\supp(b^*)}b(k)\\
                    &\qquad\quad\cdot\sum_{m\ge2}\sum_{h_m\in\H_m}\PP_{\sigma^*}^{\delta_k}(T_{\rev}=m,H_m=h_m)\PP_{\sigma'_{h_m}}^{\delta_{k_{h_m}}}(\exists m'\in\NN^*\colon K'_{m'}=\top)
                        &&(\text{def. of } \sigma_{h_m}')\\
                    &\quad\ge\sum_{k\in\supp(b^*)}b(k)\sum_{m\ge2}\sum_{h_m\in\H_m}\PP_{\sigma^*}^{\delta_k}(T_{\rev}=m,H_m=h_m)\gamma(\delta_{\overline{k}},\sigma^*)
                        &&\text{(by Eq. \eqref{equation: reachability after revealing history})}\\
                    &\quad=\sum_{k\in\supp(b^*)}b(k)\sum_{m\ge2}\sum_{h_m\in\H_m}\PP_{\sigma^*}^{\delta_k}(T_{\rev}=m,H_m=h_m)\gamma(\delta_k,\sigma^*).
                        &&\text{(by Eq. \eqref{result: almost sure value is indep})}\\
                    &\quad=\sum_{k\in\supp(b^*)}b(k)\gamma(\delta_k,\sigma^*)
                        &&(T_{\rev}<\infty\;\PP_{\sigma^*}^{\delta_k}\text{-a.s.})
                \end{align*}
                which concludes the proof.
        \end{proof}
    
    \paragraph{Reduction}
        We now present the reduction of revealing POMDPs with long-run average objectives to Commit POMDPs with reachability objectives.
        
        \begin{Lemma}\label{Result: Reduction to Reachability Objectives}
            Consider a revealing POMDP $\POMDP$ with long-run average objectives and its Commit POMDP $\POMDP'$ with reachability objectives to the target state $\X\defas\{\top\}$.
            Then, for every $b_1\in \Delta(\K)$,
            \[
                v(b_1)=v_R'(b_1').  
            \]
        \end{Lemma}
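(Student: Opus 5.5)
We prove the two inequalities $v(b_1)\ge v_R'(b_1')$ and $v(b_1)\le v_R'(b_1')$ separately.

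\textbf{Upper bound on $v_R'$, i.e.\ $v_R'(b_1')\le v(b_1)$.}
Fix $\sigma\in\Sigma'$ and $\eta>0$. By \eqref{equation: reachability explicit}, the reachability probability equals
\[
    \EE_\sigma^{b_1'}\left(\mathbf 1_{\{T_\com<\infty\}}B'_{T_\com}(k)\,v_\safe(k)\right)
\]
summed over the committed index $k$. Two observations let us assume every commit is made at a revealed belief. First, committing to $\com_k$ at a belief $B'_{T_\com}$ with $B'_{T_\com}(k)<1$ is dominated by the original dynamics. Second, for every $k$ with $v_\safe(k)>0$, there is a maximal end-component $\M\ni\{k\}$ with $v_\M(\delta_k)=v_\safe(k)$, by \eqref{equation: safe value over maximal end-components}.

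We build $\tau\in\Sigma$ in $\POMDP$ as follows. It plays $\sigma$ until $T_\com$. At a commit $\com_k$, the current belief is $B'_{T_\com}$. Since the state is $k$ with probability $B'_{T_\com}(k)$, $\tau$ switches to an $\eta$-optimal $\M$-safe strategy from $\delta_k$. From there it continues with an $\eta$-optimal safe strategy for the belief $B'_{T_\com}$, which achieves at least the same conditional contribution $B'_{T_\com}(k)v_\safe(k)$.

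The cleanest version of the second step is the following. Replace $\com_k$ played at belief $b$ by a continuation that first waits for a revelation. By Proposition~\ref{Result: revelation in end-component}, Lemma~\ref{Result: short paths between singleton supports} and the coupling of Lemma~\ref{Result: n-stage Safe value}, this continuation achieves $\ge B(k)v_\safe(k)-\eta$. Then use Lemma~\ref{Result: Independence of the Safe Value} and the fact that $v_\M(b)=v_\M(\delta_k)$ for $b\in\Delta_\M$.

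Simpler still: since $v(b)\ge \sum_k b(k)v(\delta_k)$ fails in general, we instead use $v(b)\ge b(k)\,v(\delta_k)\cdot$(nothing). So the concrete choice is as follows. Upon $\com_k$ at belief $b$, play $\tau$ as a safe $\M$-strategy from $\delta_k$ blindly. On the event $K=k$, the payoff is the $\M$-safe long-run average from $\delta_k$; on the complement, rewards are nonnegative. The remaining subtlety is that the $\M$-safe strategy from $\delta_k$ must be executable without knowing $K=k$; it is, since strategies depend only on signals.

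Finally, $\liminf$ of expectations over the random switching time is handled by the equality $v=\sup\EE\liminf$ of Remark~\ref{remark: values coincides}. We use $\eta$-optimal strategies whose average converges in expectation, obtained via Lemma~\ref{Result: Approximation of the Safe Value} block strategies, together with Fatou's lemma. Taking the supremum over $\sigma$ and letting $\eta\to0$ gives the inequality.

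\textbf{Lower bound on $v_R'$, i.e.\ $v_R'(b_1')\ge v(b_1)$.}
Fix $\eps>0$ and apply Lemma~\ref{result: CSZ theorem} to obtain $m_\eps$, $\sigma_\eps$ and $B^*$. In $\POMDP'$, play $\sigma_\eps$ for $m_\eps-1$ stages. Then, given the realization $b^*$, apply Lemma~\ref{Lemma: connect almost sure limit and reachability} with $b=B_{m_\eps}$, $b^*$ and $\sigma^*=\sigma_{b^*}$. This yields a continuation reaching $\top$ with probability at least $\sum_{k\in\supp(b^*)}B_{m_\eps}(k)\gamma(\delta_k,\sigma_{b^*})$.

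On the event $\|B_{m_\eps}-b^*\|_1\le\eps$, this is at least $\gamma(b^*,\sigma_{b^*})-\eps=v(b^*)-\eps$, using linearity of $\gamma(\cdot,\sigma_{b^*})$ in the initial belief, which holds by dominated convergence given the a.s.\ limits. The exceptional event has probability at most $\eps$. Integrating and using $\EE(v(B^*))\ge v(b_1)-\eps$, we obtain $v_R'(b_1')\ge v(b_1)-3\eps$.

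\textbf{Main obstacle.}
We expect the main obstacle to be the upper bound. It requires gluing infinitely many $\eta$-optimal safe continuations at random commit times while controlling the $\liminf$ of expected averages. We would first try to obtain from Lemma~\ref{Result: Approximation of the Safe Value} uniform-in-belief $n$-stage guarantees. These make the post-commit averages over long windows uniformly at least $v_\safe(k)-2\eta$, which makes the $\liminf$ bound straightforward.
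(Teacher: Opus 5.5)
Your proof is correct and follows the paper's in both directions. The second direction (Lemma~\ref{result: CSZ theorem}, then Lemma~\ref{Lemma: connect almost sure limit and reachability} applied at stage $m_\eps$ with $b=B_{m_\eps}$) is identical. The ``simpler still'' construction you eventually settle on for the first direction is exactly the paper's: after $\com_k$, run an $\eta$-optimal safe strategy $\sigma_k$ from $\delta_k$ blindly, and use nonnegativity of rewards off $\{K_{T_{\com}}=k\}$.

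The one real difference is how you handle the unbounded commit time. The paper truncates at a finite $N$ chosen by dominated convergence. You can instead apply Fatou's lemma directly to the nonnegative series $\frac1n\EE_\tau^{b_1}\bigl(\sum_{j\le n}G_j\bigr)\ge\sum_{m\le n,\,k}\PP_\sigma^{b_1'}(T_{\com}=m,I_m'=\com_k,K_m'=k)\,\frac{n-m+1}{n}\gamma_{n-m+1}(\delta_k,\sigma_k)$, each term of which has $\liminf$ equal to $\PP_\sigma^{b_1'}(\cdots)\,\gamma(\delta_k,\sigma_k)$. This is slightly cleaner and needs neither uniform block guarantees nor Remark~\ref{remark: values coincides}.

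You should drop the detour through $v=\sup\EE\liminf$. It would require pathwise bounds $\EE_{\sigma_k}^{\delta_k}\bigl(\liminf_n\frac1n\sum_{m\le n}G_m\bigr)\ge v_{\safe}(k)-\eta$, which the $\liminf\EE$ definition of $v_{\M}$ does not supply without an extra martingale argument. Likewise drop the earlier variants: assuming commits happen at revealed beliefs, or coupling from the commit belief via Lemma~\ref{Result: Independence of the Safe Value}. These are unnecessary, and they are not justified as stated, since the commit belief need not lie in $\Delta_{\M}$.
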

        
        \begin{proof}[Proof of Lemma~\ref{Result: Reduction to Reachability Objectives}]
        
            Consider a revealing POMDP $\POMDP$ with initial belief $b_1\in \Delta(\K)$, its Commit POMDP $\POMDP'$ and its BS-MDP $\MDP_B$.
            Recall that
            \[
                v_{\safe}(k)\defas\max\{v_\C(\delta_k)\colon\; \C=(\Q,\E)\in \mathfrak{C} \text{ and } \{k\}\in \Q\}.
            \]
        
            \vspace{1em}
            \smallskip\noindent\textit{First inequality.}
                We prove that, for every $b_1\in \Delta(\K)$, 
                \begin{align}
                    v(b_1)\ge v_R'(b_1')
                    \label{equation: inequality 1 of reduction}
                \end{align}
                By the remark on pure strategies in the preliminaries, it is enough to fix an arbitrary pure strategy $\sigma'\in \Sigma'$ on the Commit POMDP $\POMDP'$ and a parameter $\eps>0$.
                Define the first commit time in the Commit POMDP $\POMDP'$ by
                \[
                    T_{\com}\defas\inf\left\{m\in \NN^*\colon \; I_m'\in \I_{\mathsf{com}}\right\}.
                \]
                with $T_{\com}=\infty$ if no commit action is ever played. On $\{T_{\com}=\infty\}$, define $K_{T_{\com}+1}'$ arbitrarily.
                
                The event $\{T_{\com}=m\}$ means that the controller has not played a commit action before stage $m$.
                Therefore, the histories and beliefs in $\POMDP$ and $\POMDP'$ up to stage $m$ are the same.
                In particular, the history $H_m'$ (resp., the belief $B_m'$) can be identified with $H_m$ (resp., $B_m$) in $\POMDP$.
                Moreover, on the event $\left\{T_{\com}=m,I'_m=\com_k\right\}$, the action $\com_k$ sends the process to $\top$ with probability $v_{\safe}(k)$ if the hidden state is $k$.
                Since the strategy selects $I_m'$ using only $H_m'$, for every $\another{k}\in\K$,
                \begin{equation}
                    \PP_{\sigma'}^{b_1'}\left(K_m'=\another{k}\givenm T_{\com}=m,H_m',I_m'=\com_k\right)=B_m'(\another{k}).
                    \label{equation: identity for belief in commit}
                \end{equation}        
        
                By construction of the Commit POMDP, the state $\top$ can be reached only after a commit action. 
                Since $\top$ and $\bot$ are absorbing, we get
                \begin{align}
                    &\PP_{\sigma'}^{b_1'}(\exists m \geq 1\colon K_m'=\top)\nonumber\\
                        & = \PP_{\sigma'}^{b_1'}(K_{T_{\com}+1}'=\top,T_{\com}<\infty)
                            &&\text{(def. of \(T_{\com}\))}\nonumber\\
                        & = \sum_{m\in \NN^*}\PP_{\sigma'}^{b_1'}(K_{m+1}'=\top,T_{\com}=m)
                            &&\text{(partition)}\nonumber\\
                        & = \sum_{m\in \NN^*}\sum_{k\in \K}\PP_{\sigma'}^{b_1'}\left(K_{m+1}'=\top, T_{\com}=m,I_{m}'=\com_k\right)
                            &&\text{(partition)}\nonumber\\
                        & = \sum_{m\in \NN^*}\sum_{k\in \K}\EE_{\sigma'}^{b_1'}\left(\mathbf{1}_{\left\{T_{\com}=m, I_m'=\com_k\right\}}\right.\nonumber\\
                        &\qquad\qquad\qquad\left.\PP_{\sigma'}^{b_1'}\left(K_{m+1}'=\top\givenm T_{\com}=m, H_m', I_{m}'=\com_k\right) \right)
                            &&\text{(tower rule)}\nonumber\\
                        & = \sum_{m\in \NN^*}\sum_{k\in \K}\EE_{\sigma'}^{b_1'}\left(\mathbf{1}_{\left\{T_{\com}=m, I_m'=\com_k\right\}}\right.\nonumber\\
                        &\quad\left.\cdot\sum_{\another{k}\in \K}\PP_{\sigma'}^{b_1'}\left(K_m'=\another{k}\givenm T_{\com}=m, H_m', I_{m}'=\com_k\right)p'(\top,\top\given \another{k},\com_k) \right)
                            &&\text{(Markov property)}\nonumber\\
                        & = \sum_{m\in \NN^*}\sum_{k\in \K}\EE_{\sigma'}^{b_1'}\left(\mathbf{1}_{\left\{T_{\com}=m, I_m'=\com_k\right\}}\sum_{\another{k}\in \K}B_m'(\another{k})v_{\safe}(k)\mathbf{1}_{\left\{\another{k}=k\right\}} \right)
                            &&\text{(by Eq.~\eqref{equation: identity for belief in commit} and def. of \(p'\))}\nonumber\\
                        & = \EE_{\sigma'}^{b_1'}\left(\sum_{m\in \NN^*}\sum_{k\in \K}\mathbf{1}_{\left\{T_{\com}=m, I_m'=\com_k\right\}}B_m'(k)v_{\safe}(k) \right).
                        \label{Equation: random time reachability}
                \end{align}
                The tail random variables below converge pointwise to $0$ and are bounded by $1$, because at most one summand is nonzero. Therefore, by the dominated convergence theorem, there exists $N\in \NN^*$ such that
                \[
                    \EE_{\sigma'}^{b_1'}\left(\sum_{m>N}\sum_{k\in \K}\mathbf{1}_{\left\{T_{\com}=m\right\}}\mathbf{1}_{\left\{I_m'=\com_k\right\}}B_m'(k)v_{\safe}(k)\right)\le \eps.
                \]
                
                We now construct a strategy $\sigma\in \Sigma$ in the revealing POMDP $\POMDP$.
                First, for every $k\in\K$, choose a strategy $\sigma_k\in \Sigma$ such that
                \begin{equation}
                    \gamma(\delta_k,\sigma_k)\geq v_{\safe}(k)-\eps.\label{Equation: existence of eps opt safe strategies}
                \end{equation}
                Indeed, by definition of $v_{\safe}(k)$, if $v_{\safe}(k)>0$, then there exists an end-component $\C=(\Q,\E)$ such that $\{k\} \in \Q$ and $v_\C(\delta_k)=v_{\safe}(k)$. We choose an $\eps$-optimal $\C$-safe strategy. 
                Moreover, if $v_{\safe}(k)=0$ then every strategy satisfies inequality \eqref{Equation: existence of eps opt safe strategies} and thus, a strategy $\sigma_k$ can be chosen arbitrarily.
                
                The strategy $\sigma$ in $\POMDP$ is defined as follows:
                \begin{itemize}
                    \item 
                        Up to stage $N$, $\sigma$ plays the same action as in $\sigma'$ until a commit action is played;
                    \item 
                        If the strategy $\sigma'$ selects a commit action $\com_k$ at some stage $m\leq N$, then the strategy $\sigma$ switches to the continuation strategy $\sigma_k$;
                    \item
                        If no commit action is played during stages $m\in [1\until N]$, then $\sigma$ plays arbitrarily from stage $N+1$ onward.        
                \end{itemize}
                
                We obtain that, for every $n\geq N$,
                \begin{align*}
                    \EE_{\sigma}^{b_1}\left(\dfrac{1}{n}\sum_{m=1}^nG_m\right)
                        \geq \sum_{m=1}^N\sum_{k\in \K}
                            \PP_{\sigma'}^{b_1'}(T_{\com}=m,I_m'=\com_k,K_m'=k)\dfrac{n-m+1}{n}\gamma_{n-m+1}(\delta_k,\sigma_k).
                \end{align*}
                Taking the limit inferior as $n\to\infty$,
                \begin{align*}
                    &\gamma(b_1,\sigma)\\
                        &\quad \geq \sum_{m=1}^N\sum_{k\in \K}
                        \PP_{\sigma'}^{b_1'}(T_{\com}=m,I_m'=\com_k,K_m'=k)
                        \gamma(\delta_k,\sigma_k)\\
                        &\quad \geq \sum_{m=1}^N\sum_{k\in \K}
                        \PP_{\sigma'}^{b_1'}(T_{\com}=m,I_m'=\com_k,K_m'=k)
                        (v_{\safe}(k)-\eps)
                            &&\text{(def. of \(\sigma_k\))}\\
                        &\quad = \sum_{m=1}^N\sum_{k\in \K}
                        \PP_{\sigma'}^{b_1'}(T_{\com}=m,I_m'=\com_k,K_m'=k)
                        v_{\safe}(k) \\
                        &\quad\qquad - \sum_{m=1}^N\sum_{k\in \K}
                        \PP_{\sigma'}^{b_1'}(T_{\com}=m,I_m'=\com_k,K_m'=k)
                        \eps\\
                        &\quad \geq \sum_{m=1}^N\sum_{k\in \K}
                        \PP_{\sigma'}^{b_1'}(T_{\com}=m,I_m'=\com_k,K_m'=k)
                        v_{\safe}(k)-\eps\\
                        &\quad = \EE_{\sigma'}^{b_1'}\left(
                        \sum_{m=1}^N\sum_{k\in \K}
                        \mathbf{1}_{\{T_{\com}=m\}}
                        \mathbf{1}_{\{I_m'=\com_k\}}
                        B_m'(k)v_{\safe}(k)\right)-\eps
                            &&\text{(def. of $B_m'$)}\\
                        &\quad = \PP_{\sigma'}^{b_1'}
                        (\exists m\in \NN^*\colon K_m'=\top)\\
                        &\quad\qquad - \EE_{\sigma'}^{b_1'}\left(
                        \sum_{m>N}\sum_{k\in \K}
                        \mathbf{1}_{\{T_{\com}=m\}}
                        \mathbf{1}_{\{I_m'=\com_k\}}
                        B_m'(k)v_{\safe}(k)\right)-\eps
                            &&\text{(eq.~\eqref{Equation: random time reachability})}\\
                        &\quad \geq \PP_{\sigma'}^{b_1'}
                        (\exists m\in \NN^*\colon K_m'=\top)-2\eps.
                            &&\text{(choice of \(N\))}
                \end{align*}
                Thus, for every pure $\sigma'\in\Sigma'$,
                \[
                    v(b_1)\ge \PP_{\sigma'}^{b_1'}(\exists m\in\NN^*\colon K_m'=\top)-2\eps.
                \]
                Taking the supremum over pure $\sigma'$ and using the preliminary remark gives $v(b_1)\ge v_R'(b_1')-2\eps$.
                Letting $\eps\to 0$ yields \eqref{equation: inequality 1 of reduction}.
        
            \vspace{1em}
            \smallskip\noindent\textit{Second inequality.}
                We prove that, for every $b_1\in \Delta(\K)$,
                \begin{align}
                    v(b_1)\le v_R'(b_1').
                    \label{result: second inequality coupling}
                \end{align}            
                
                Fix $\eps>0$. 
                By Lemma \ref{result: CSZ theorem}, there exist $m_\eps\in\NN^*$, a strategy $\sigma_\eps\in\Sigma$, and a belief $B^*\in\Delta(\K)$ determined by $H_{m_\eps}$, such that
                
                \begin{equation}
                    \label{equation: close beliefs}
                    \PP_{\sigma_\eps}^{b_1} \left(\|B_{m_\eps}-B^*\|_1\leq\eps\right)\ge 1-\eps
                \end{equation}
                and
                \begin{equation}
                    \label{equation: value lower bound}
                    \EE_{\sigma_\eps}^{b_1}[v(B^*)]
                    \geq v(b_1)-\eps.
                \end{equation}
                For every realization $b^*$ of $B^*$, consider a strategy $\sigma_{b^*}\in\Sigma$ such that, for every
                $k\in\supp(b^*)$,
                \[
                    \dfrac{1}{n}\sum_{m=1}^{n}G_m \xrightarrow[n\to\infty]{} \gamma(\delta_k,\sigma_{b^*}) \qquad \PP_{\sigma_{b^*}}^{\delta_k}\text{-almost surely},
                \]
                and
                \begin{equation}\label{equation: value preservation}
                    v(b^*)
                        = \gamma(b^*,\sigma_{b^*})
                        = \sum_{k\in\supp(b^*)}b^*(k)\gamma(\delta_k,\sigma_{b^*}).
                \end{equation}
                Since $B^*$ is determined by $H_{m_\eps}$, there exists a mapping $\phi_\eps\colon \H_{m_\eps}\to\Delta(\K)$ such that $B^*=\phi_\eps(H_{m_\eps})$ $\PP_{\sigma_\eps}^{b_1}$-almost surely.
                Define $\H_{m_\eps}^{\sigma_\eps}(b_1)\defas\{h_{m_\eps}\in \H_{m_\eps}\colon \PP_{\sigma_\eps}^{b_1}(H_{m_\eps}=h_{m_\eps})>0\}$.
                Given a history $h_{m_\eps}\in \H_{m_\eps}^{\sigma_\eps}(b_1)$, we will use the following notation 
                \[
                    b_{h_{m_\eps}}\defas b_{h_{m_\eps}}^{b_1}\text{ and }b_{h_{m_\eps}}^*=\phi_{\eps}(h_{m_\eps}).
                \]
                
                For every history $h_{m_\eps}\in \H_{m_\eps}^{\sigma_\eps}(b_1)$, Lemma~\ref{Lemma: connect almost sure limit and reachability}, applied with parameters $b=b_{h_{m_\eps}}$, $b^*=b_{h_{m_\eps}}^*$, and $\sigma^*=\sigma_{b_{h_{m_\eps}}^*}$, yields a continuation strategy $\sigma'_{h_{m_\eps}}\in \Sigma'$ such that
                \begin{align}
                    &\PP_{\sigma_{h_{m_\eps}}'}^{b_{h_{m_\eps}}'}(\exists m\in \NN^*\colon\; K_m'=\top)\nonumber\\
                        &\qquad \ge \sum_{k\in\supp\left(b_{h_{m_\eps}}^*\right)}b_{h_{m_\eps}}(k)\gamma\left(\delta_k,\sigma_{b_{h_{m_\eps}}^*}\right)
                            &&\text{(Lemma~\ref{Lemma: connect almost sure limit and reachability})}\nonumber\\
                        &\qquad = \sum_{k\in\supp\left(b_{h_{m_\eps}}^*\right)}\left(b_{h_{m_\eps}}(k)-b_{h_{m_\eps}}^*(k)+b_{h_{m_\eps}}^*(k)\right)\gamma\left(\delta_k,\sigma_{b_{h_{m_\eps}}^*}\right)\nonumber\\
                        &\qquad = \sum_{k\in\supp\left(b_{h_{m_\eps}}^*\right)}b_{h_{m_\eps}}^*(k)\gamma\left(\delta_k,\sigma_{b_{h_{m_\eps}}^*}\right)\nonumber\\
                        &\qquad\qquad
                            +
                            \sum_{k\in\supp\left(b_{h_{m_\eps}}^*\right)}\left(b_{h_{m_\eps}}(k)-b_{h_{m_\eps}}^*(k)\right)\gamma\left(\delta_k,\sigma_{b_{h_{m_\eps}}^*}\right)
                                &&\text{(linearity)}\nonumber\\
                        &\qquad \ge \sum_{k\in\supp\left(b_{h_{m_\eps}}^*\right)}b_{h_{m_\eps}}^*(k)\gamma\left(\delta_k,\sigma_{b_{h_{m_\eps}}^*}\right)-\left\|b_{h_{m_\eps}}-b_{h_{m_\eps}}^*\right\|_1
                            &&\text{($\gamma(\cdot)\in [0,1]$)}\nonumber\\
                        &\qquad = v\left(b_{h_{m_\eps}}^*\right)-\left\|b_{h_{m_\eps}}-b_{h_{m_\eps}}^*\right\|_1.
                            &&\text{(by Eq.~\eqref{equation: value preservation})}
                        \label{equation: continuation reachability and long run}
                \end{align}
                We construct a strategy $\sigma'\in \Sigma'$ as follows:
                \begin{itemize}
                    \item 
                        Up to stage $m_\eps-1$, the strategy $\sigma'$ follows the strategy $\sigma_\eps$.
                    \item 
                        At stage $m_\eps$, when observing $h_{m_\eps}\in \H_{m_\eps}^{\sigma_\eps}(b_1)$, the strategy switches to $\sigma'_{h_{m_\eps}}$.
                    \item 
                        After histories not belonging to $\H_{m_\eps}^{\sigma_\eps}(b_1)$, the strategy $\sigma'$ is defined arbitrarily.
                \end{itemize}
    
                Since no commit action is played before stage $m_\eps$, the state $\top$ cannot be reached before that stage.
                Moreover, $\sigma'$ in $\POMDP'$ and $\sigma_\eps$ in $\POMDP$ induce the same distribution over $H_{m_\eps}$.
                Therefore, for every $b_1\in \Delta(\K)$,
                \begin{align*}
                    &\PP_{\sigma'}^{b_1'}(\exists m\in\NN^*\colon K_m'=\top)\\
                    &\quad =\sum_{h_{m_\eps}\in\H_{m_\eps}^{\sigma_\eps}(b_1)}\PP_{\sigma_\eps}^{b_1}(H_{m_\eps}=h_{m_\eps})\PP_{\sigma'_{h_{m_\eps}}}^{b_{h_{m_\eps}}'}(\exists m\in\NN^*\colon K_m'=\top)
                        &&\text{(expectation)}\\
                    &\quad =\EE_{\sigma_\eps}^{b_1}\left(\PP_{\sigma'_{H_{m_\eps}}}^{B_{m_\eps}'}(\exists m\in\NN^*\colon K_m'=\top)\right)
                        &&\text{(def. of $\sigma_\eps$)}\\
                    &\quad \geq\EE_{\sigma_\eps}^{b_1}\left(\mathbf{1}_{\{\|B_{m_\eps}-B^*\|_1\leq\eps\}}\left(v(B^*)-\|B_{m_\eps}-B^*\|_1\right)\right)
                        &&\text{(by Eq. \eqref{equation: continuation reachability and long run})}\\
                    &\quad \geq\EE_{\sigma_\eps}^{b_1}\left(\mathbf{1}_{\{\|B_{m_\eps}-B^*\|_1\leq\eps\}}v(B^*)\right)-\eps\\
                    &\quad =\EE_{\sigma_\eps}^{b_1}\left(v(B^*)\right)-\EE_{\sigma_\eps}^{b_1}\left(\mathbf{1}_{\{\|B_{m_\eps}-B^*\|_1>\eps\}}v(B^*)\right)-\eps
                        &&\text{(decomposition)}\\
                    &\quad \geq\EE_{\sigma_\eps}^{b_1}\left(v(B^*)\right)-\PP_{\sigma_\eps}^{b_1}\left(\left\|B_{m_\eps}-B^*\right\|_1>\eps\right)-\eps
                        &&\text{($0\leq v(B^*)\leq 1$)}\\
                    &\quad \geq\EE_{\sigma_\eps}^{b_1}\left(v(B^*)\right)-2\eps
                        &&\text{(by Eq. \eqref{equation: close beliefs})}\\
                    &\quad \geq v(b_1)-3\eps
                        &&\text{(by Eq. \eqref{equation: value lower bound}).}
                \end{align*}
                Therefore, for every $b_1\in \Delta(\K)$,
                \[
                    v_R'(b_1')\ge v(b_1)-3\eps.
                \]
                Since $\eps>0$ was taken arbitrary, we conclude that, for every $b_1\in \Delta(\K)$,
                \[
                    v_R'(b_1')\ge v(b_1).
                \]
                By combining \eqref{equation: inequality 1 of reduction} and \eqref{result: second inequality coupling}, we obtain that, for every $b_1\in \Delta(\K)$, $v_{R}'(b_1')=v(b_1)$, which concludes the proof.
        \end{proof}
    

\subsection{Proof of the EXPTIME Upper Bound}\label{Section: Proof of Theorem}
    
    We can now prove the first item of \Cref{Result: Approximating the uniform value for revealing POMDPs is decidable}.
    
    \begin{proof}[Proof of the first item of \Cref{Result: Approximating the uniform value for revealing POMDPs is decidable}]
    
        Consider a revealing POMDP $\POMDP$ with initial belief $b_1\in \Delta(\K)$ and $\eps>0$.
        Since the statement trivially holds for $\eps\geq1$, we assume that $\eps\in (0,1)$.
        Set $\eta\defas\eps/4$.
        By~\Cref{Theorem: approximation of max safe value is EXPTIME}, in exponential time we can compute rational numbers $\widehat v(k)$, for all $k\in\K$, such that
        \[
            \max_{k\in\K}|\widehat v(k)-v_{\safe}(k)|\leq\eta.
        \]
        Replacing each $\widehat v(k)$ by its clipping to $[0,1]$ does not increase this error, so we henceforth assume $\widehat v(k)\in[0,1]$.
        Let $L\defas\lceil\log_2(4/\eps)\rceil$ and, for every $k\in\K$, round $\widehat v(k)$ to a dyadic number $w(k)\in[0,1]$ satisfying $|w(k)-\widehat v(k)|\leq2^{-L}$.
        Then
        \[
            \max_{k\in\K}|w(k)-v_{\safe}(k)|
            \leq\eta+2^{-L}\leq\eps/2.
        \]
        Consider the Approximate Commit POMDP $\POMDP'[w]$.
        Lemmas~\ref{Results: approximating reachability in Commit POMDP} and~\ref{Result: Reduction to Reachability Objectives} yield
        \[
            |v_{R,w}'(b_1')-v(b_1)|
            =|v_{R,w}'(b_1')-v_R'(b_1')|
            \leq\eps/2.
        \]
    
        By~\cite{asadi2026revealing}, in exponential time we can compute a number $z$ satisfying
        \[
            |z-v_{R,w}'(b_1')|\leq\eps/2,
        \]
        because $\POMDP'[w]$ is revealing.
        Consequently,
        \[
            |z-v(b_1)|
            \leq |z-v_{R,w}'(b_1')|+|v_{R,w}'(b_1')-v(b_1)|
            \leq\eps.
        \]
    
        It remains to check that the two exponential-time procedures compose without an exponential blow-up in the encoding passed to the second procedure.
        The finite-horizon construction in the proof of~\Cref{Theorem: approximation of max safe value is EXPTIME} uses rational arithmetic over exponentially many backups.
        At each successive backup, the numerator and denominator lengths grow by at most a polynomial amount; since the horizon and the number of backups are at most exponential, the rational arithmetic and the rounding above can be performed in exponential time.
        Each $w(k)$ has $O(\log(1/\eps))$ bits.
        Thus, $\POMDP'[w]$ has $|\K|+2$ states and an encoding length polynomial in the encoding length of $\POMDP$ and $\log(1/\eps)$.
        Only these rounded weights, rather than the possibly much longer intermediate values $\widehat v(k)$, are passed to the reachability algorithm.
        Hence the second procedure also takes time exponential in the original input size, and the full approximation algorithm runs in exponential time.
    \end{proof}


\section{Proof of the EXPTIME Lower Bound}\label{Section: Hardness}

This section proves the second item of \Cref{Result: Approximating the uniform value for revealing POMDPs is decidable} by a reduction from the almost-sure safety problem in POMDPs.
The reduction is inspired by the $\mathrm{EXPTIME}$-hardness proof for revealing POMDPs with parity objectives~\cite{belly2025revelations}.

\paragraph{Safety objective}
    Consider a POMDP $\POMDP=(\K,\I,\S,p,g)$, an initial state $k_1\in\K$, and a set of safe states $\F\subseteq\K$ with $k_1 \in \F$.
    Given a strategy $\sigma\in\Sigma$, the safety objective is
    \[
        \PP_{\sigma}^{\delta_{k_1}}\left(\forall m\in\NN^*,\;K_m\in\F\right).
    \]
    A strategy is almost-sure winning for the safety objective if this probability is one.
    The almost-sure safety problem asks whether such a strategy exists.\\

    We first recall the following classical result on POMDPs with safety objectives~\cite{chatterjee2016decidable}.

    \begin{Lemma}\label{Result: almost-sure safety hardness}
        The almost-sure safety problem for POMDPs is $\mathrm{EXPTIME}$-complete.
    \end{Lemma}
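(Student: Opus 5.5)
The plan is to prove the two bounds separately. Both rest on one observation: for safety objectives in a POMDP, almost-sure winning coincides with \emph{sure} winning. A violation of safety is witnessed by a finite prefix. Under any strategy, every finite prefix compatible with the transition supports has positive probability, because there are finitely many states and signals and each nonzero entry is at least $p_{\min}$. Hence a strategy is almost-sure safe from $\delta_{k_1}$ iff no play compatible with it ever leaves $\F$. Only the supports of the transition function matter, and the problem becomes qualitative.

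\emph{Membership in $\mathrm{EXPTIME}$.} By the observation above, whether a history is still winning depends only on the current belief-support. I would therefore work in the BS-MDP $\MDP_B$, viewed as a finite graph on $\beliefsupport$, restricted to supports $q\subseteq\F$. I would compute the greatest set $W\subseteq\{q\in\beliefsupport\colon q\subseteq\F\}$ such that every $q\in W$ has an action $i$ with $\Post(q,i)\subseteq W$. The computation is the standard greatest fixpoint: start from all supports contained in $\F$ and iteratively delete supports that have no such action. $W$ has at most $2^{|\K|}$ elements and each deletion round is polynomial in $2^{|\K|}\cdot|\I|\cdot|\S|$, so the whole procedure runs in exponential time. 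Correctness has two directions.
\begin{itemize}
\item If $\{k_1\}\in W$, the memoryless support-based strategy that plays a witnessing action keeps every reachable state in $\F$ surely.
\item Conversely, the set of supports reachable under any almost-sure safe strategy satisfies the fixpoint condition, so it is contained in $W$.
\end{itemize}
For the converse I would reason as in the closedness argument of Lemma~\ref{Lemma: connect almost sure limit and reachability}: each support reachable with positive probability has some action played with positive probability, and all of that action's posterior supports are reachable with positive probability and contained in $\F$.

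\emph{$\mathrm{EXPTIME}$-hardness.} I would reduce from sure safety in two-player games of imperfect information, where player 1 is partially informed and the adversary resolves nondeterminism with full information; Reif showed this problem is $\mathrm{EXPTIME}$-hard. Alternatively, one can encode an alternating linear-space Turing machine directly. The reduction replaces each adversary choice by a uniform random choice, with signals equal to player 1's observations. By the observation at the start, player 1 has an almost-sure safe strategy in the resulting POMDP iff it has a surely winning observation-based strategy in the game, because adversary moves become positive-probability branches.

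The main obstacle is the encoding in the hardness part. The adversary's moves must be made invisible to the controller, which means routing them through hidden random choices while keeping the observation structure identical. The POMDP must also stay of polynomial size. If one reduces directly from alternating linear-space machines, this requires the standard gadget in which the controller guesses tape contents and randomly placed hidden checks punish cheating with an unsafe state. I would try Reif's game formulation first, since the translation is then nearly syntactic; this is also the route of the cited proof~\cite{chatterjee2016decidable}.
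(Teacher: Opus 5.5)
Your proposal is correct and follows the standard route behind the paper's citation: the paper does not prove this lemma but cites \cite{chatterjee2016decidable}, and your greatest fixpoint $W$ over belief-supports contained in $\F$ is exactly the characterization the paper later invokes in its hardness proof (the set $\W$ with actions $\iota(q)$ satisfying $q\subseteq\F$ and $\Post(q,\iota(q))\subseteq\W$), while the coincidence of sure and almost-sure winning for safety is what makes your reduction from imperfect-information games work. When you write out the lower bound, make sure the imported game-hardness result is for the \emph{safety} condition of the partially informed player, since such games are not determined and a reachability result does not simply dualize; otherwise use your direct alternating-machine encoding, adding a clock so that every honest run halts and ``never reject'' is equivalent to ``accept''.
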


    The following lemma gives the finite-horizon consequence that we use in the reduction.

    \begin{Lemma}
        \label{Result: finite-horizon safety gap}
        Consider a POMDP $\POMDP$, an initial state $k_1\in \K$, and a set of safe states $\F\subseteq \K$ with $k_1\in \F$.
        Assume that no strategy is almost-sure winning for the safety objective.
        Then, there exist a horizon $n\in \NN^*$ and a constant $\eta>0$ such that, for every strategy $\sigma\in \Sigma$,
        \[
            \PP_{\sigma}^{\delta_{k_1}}\left(\exists m\in[1\until n]\colon\;K_m\notin \F\right)\geq\eta.
        \]
    \end{Lemma}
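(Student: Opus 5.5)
We argue by contraposition: assume that for every horizon $n$ the quantity
\[
    u_n\defas\inf_{\sigma\in\Sigma}\PP_{\sigma}^{\delta_{k_1}}\left(\exists m\in[1\until n]\colon K_m\notin\F\right)
\]
equals $0$, and construct an almost-sure winning strategy. Here $u_n$ is nondecreasing in $n$, so the negation of the statement is exactly $u_n=0$ for all $n$.

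The key observation is that violating safety within $n$ stages depends only on the finitely many histories of length at most $n$. Moreover, by the remark on pure strategies, the infimum can be taken over pure strategies. For a fixed $n$, pure strategies restricted to $\bigcup_{m\le n}\H_m$ form a finite set. Hence the infimum defining $u_n$ is a minimum, and it is attained by some pure strategy $\sigma_n$ with
\[
    \PP_{\sigma_n}^{\delta_{k_1}}\left(\exists m\le n\colon K_m\notin\F\right)=0.
\]
Call a pure partial strategy on histories of length below $n$ \emph{$n$-safe} if it has this property.

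We then apply a König's lemma / compactness argument. The set of $n$-safe partial strategies is finite and nonempty for every $n$. It is also closed under restriction: restricting an $(n+1)$-safe strategy to histories of length below $n$ gives an $n$-safe strategy, since the event for $n$ is contained in the event for $n+1$ and the law up to stage $n$ depends only on the restriction. These sets therefore form a finitely branching infinite tree, and König's lemma yields a compatible sequence whose union is a pure strategy $\sigma^\infty$. For every $n$,
\[
    \PP_{\sigma^\infty}^{\delta_{k_1}}\left(\exists m\le n\colon K_m\notin\F\right)=0.
\]
By continuity from below, the probability of ever leaving $\F$ is $0$, so $\sigma^\infty$ is almost-sure winning, which is a contradiction. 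Hence some $n$ has $u_n>0$, and we set $\eta\defas u_n$.

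The main obstacle is justifying that the infimum is attained, that is, that restricting to pure strategies is without loss for this finite-horizon probability. For fixed $n$ the probability is multilinear in the finitely many mixing weights of a behavioural strategy, so its minimum over the compact product of simplices is attained at a vertex, which is a pure strategy. Alternatively one can argue qualitatively: a behavioural strategy gives probability $0$ to the violation event iff every action in its support does, so the pure strategy selecting any supported action is also $n$-safe.
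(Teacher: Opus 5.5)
Your proposal is correct and follows the paper's proof essentially step for step: you argue by contraposition on the vanishing finite-horizon escape probabilities, note that each infimum is attained by a pure strategy because the restricted pure strategies form a finite set, apply K\"onig's lemma to the tree of $n$-safe restrictions, and use continuity of the measure to pass to the infinite horizon. The only addition is your explicit justification of the reduction to pure strategies (multilinearity of the finite-horizon probability over a compact product of simplices), which the paper simply takes from its remark on pure strategies.
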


    \begin{proof}[Proof of Lemma~\ref{Result: finite-horizon safety gap}]
        Consider a POMDP $\POMDP$, an initial state $k_1\in \K$, and a set of safe states $\F\subseteq \K$ with $k_1\in \F$.
        For every horizon $n\in \NN^*$, define the minimal escape probability within the first $n$ stages by
        \begin{align}
            \alpha_n\defas\inf_{\sigma\in \Sigma}\PP_{\sigma}^{\delta_{k_1}}\left(\exists m\in[1\until n]\colon\;K_m\notin \F\right).
            \label{equation: escape probability}
        \end{align}
        The conclusion of the lemma holds if and only if there exists a horizon $n\in \NN^*$ such that $\alpha_n>0$.
        Indeed, if $\alpha_n>0$, then the horizon $n$ and the constant $\eta\defas\alpha_n$ satisfy the conclusion, and conversely every horizon $n$ and constant $\eta>0$ satisfying the conclusion give $\alpha_n\ge \eta>0$.
        Therefore, we prove the contrapositive, i.e., we assume that
        \begin{align}
            \alpha_n=0\qquad\text{for every }n\in \NN^*,
            \label{equation: vanishing escape probability}
        \end{align}
        and we construct an almost-sure winning strategy for the safety objective.

        We first prove that, for every horizon $n\in \NN^*$, there exists a pure strategy $\sigma\in \Sigma$ such that
        \begin{align}
            \PP_{\sigma}^{\delta_{k_1}}\left(\forall m\in[1\until n],\;K_m\in \F\right)=1.
            \label{equation: n-stage safe pure strategy}
        \end{align}
        Fix a horizon $n\in \NN^*$ and denote by $\Sigma_n$ the set of restrictions of the pure strategies to the histories before stage $n$.
        The probability in \eqref{equation: escape probability} depends on the strategy only through its restriction to the histories before stage $n$.
        Moreover, by the remark on pure strategies in the preliminaries, restricting the infimum in \eqref{equation: escape probability} to pure strategies does not change its value.
        Since the sets of actions and signals are finite, the set $\Sigma_n$ is finite, and thus the infimum $\alpha_n$ is attained by a pure strategy $\sigma$.
        By \eqref{equation: vanishing escape probability}, we deduce that
        \[
            \PP_{\sigma}^{\delta_{k_1}}\left(\exists m\in[1\until n]\colon\;K_m\notin \F\right)=\alpha_n=0,
        \]
        which proves \eqref{equation: n-stage safe pure strategy}.

        We now construct the strategy.
        Consider the tree whose nodes at depth $n\in \NN^*$ are the restrictions in $\Sigma_n$ of the pure strategies satisfying \eqref{equation: n-stage safe pure strategy}, and in which the parent of a node at depth $n+1$ is its restriction to the histories before stage $n$.
        The tree is well defined because every pure strategy satisfying \eqref{equation: n-stage safe pure strategy} at the horizon $n+1$ also satisfies it at the horizon $n$.
        Moreover, the tree has a node at every depth by \eqref{equation: n-stage safe pure strategy}, and it is finitely branching because the set $\Sigma_n$ is finite for every $n\in \NN^*$.
        Therefore, by K\"onig's lemma, the tree has an infinite branch $\left(\sigma_n^*\right)_{n\in \NN^*}$ with $\sigma_n^*\in \Sigma_n$ for every $n\in \NN^*$.
        Since $\sigma_{n+1}^*$ restricts to $\sigma_n^*$ for every $n\in \NN^*$, the branch defines a pure strategy $\sigma^*\in \Sigma$ that agrees with $\sigma_n^*$ on the histories before stage $n$.
        Since every $\sigma_n^*$ is the restriction of a pure strategy satisfying \eqref{equation: n-stage safe pure strategy}, and since the event in \eqref{equation: n-stage safe pure strategy} depends on the strategy only through its restriction to the histories before stage $n$, we obtain that, for every horizon $n\in \NN^*$,
        \begin{align}
            \PP_{\sigma^*}^{\delta_{k_1}}\left(\forall m\in[1\until n],\;K_m\in \F\right)=1.
            \label{equation: safe branch}
        \end{align}
        Finally, we deduce that
        \begin{align*}
            \PP_{\sigma^*}^{\delta_{k_1}}\left(\forall m\in \NN^*,\;K_m\in \F\right)
                &=\lim_{n\to\infty}\PP_{\sigma^*}^{\delta_{k_1}}\left(\forall m\in[1\until n],\;K_m\in \F\right)
                    &&\text{(continuity from above)}\\
                &=1.
                    &&\text{(by Eq.~\eqref{equation: safe branch})}
        \end{align*}
        Therefore, the strategy $\sigma^*$ is almost-sure winning for the safety objective, which proves the contrapositive and concludes the proof.
    \end{proof}

\paragraph{Reduction}
    Consider a POMDP $\POMDP=(\K,\I,\S,p,g)$, an initial state $k_1\in \K$, and a set of safe states $\F\subseteq \K$ with $k_1\in \F$.
    We define the POMDP $\another{\POMDP}=\left(\K,\I,\another{\S},\another{p},\another{g}\right)$ by
    \begin{itemize}
        \item
            $\K$ and $\I$ are the states and the actions of $\POMDP$, and every state in $\K\setminus \F$ is absorbing;
        \item
            $\another{\S}=\S\cup\left\{\rev_k\colon\; k\in \K\right\}\cup\left\{\reset\right\}$, where we assume without loss of generality that the signals $\rev_k$ with $k\in \K$ and the signal $\reset$ do not belong to $\S$;
        \item
            $\another{p}\colon \K\times \I\to \Delta\left(\K\times \another{\S}\right)$ is the transition function defined by
            \begin{itemize}
                \item
                    For every $k\in \F$, $k'\in \K$, $i\in \I$, and $s\in \S$,
                    \begin{align*}
                        \another{p}(k',s\given k,i)&=\dfrac{1}{3}p(k',s\given k,i),\\
                        \another{p}(k',\rev_{k'}\given k,i)&=\dfrac{1}{3}\sum_{s'\in \S}p(k',s'\given k,i),\\
                        \another{p}(k_1,\reset\given k,i)&=\dfrac{1}{3};
                    \end{align*}
                \item
                    For every $k\in \K\setminus \F$ and $i\in \I$, $\another{p}(k,\rev_k\given k,i)=1$;
                \item
                    All the transition probabilities not specified above are zero;
            \end{itemize}
        \item
            $\another{g}\colon \K\times \I\to [0,1]$ is the stage reward defined by $\another{g}(k,i)=\mathbf{1}_{\left\{k\in \F\right\}}$.
    \end{itemize}
    \noindent Intuitively, from a safe state $k\in \F$, after playing an action $i\in \I$, the transition of $\another{\POMDP}$ has three branches, each selected with probability $1/3$:
    \begin{enumerate}
        \item
            The original branch follows $p(\,\cdot \given k,i)$ and the controller observes the original signal;
        \item
            The revealing branch follows the marginal state transition of $p(\,\cdot \given k,i)$ and the controller observes the signal $\rev_{k'}$, which reveals the successor state $k'$;
        \item
            The reset branch moves to the initial state $k_1$ and the controller observes the signal $\reset$, which reveals the initial state $k_1$ because the successor of this branch is always $k_1$.
    \end{enumerate}
    The construction is polynomial in the size of $\POMDP$.
    The set of strategies in $\another{\POMDP}$ is denoted by $\another{\Sigma}$, and the matrices of $\another{\POMDP}$, defined as in \Cref{Section: Preliminaries}, are denoted by $\another{P}$.
    Given an initial belief $b_1\in \Delta(\K)$ and a strategy $\another{\sigma}\in \another{\Sigma}$, we denote by $\another{\PP}_{\another{\sigma}}^{b_1}$ the probability measure induced by $\another{\sigma}$ from $b_1$ in $\another{\POMDP}$, by $\another{\EE}_{\another{\sigma}}^{b_1}$ the corresponding expectation, by $\another{\gamma}$ the long-run average objective of $\another{\POMDP}$, and by $\another{v}$ its long-run average value.

    The POMDP $\another{\POMDP}$ is revealing.
    Indeed, fix an action $i\in \I$ and states $k,k'\in \K$ such that $\sum_{\another{s}\in \another{\S}}\another{P}_{k,k'}(i,\another{s})>0$.
    If $k\in \F$ and $\sum_{s\in \S}p(k',s\given k,i)>0$, then the definition of $\another{p}$ gives that $\another{P}_{k,k'}(i,\rev_{k'})>0$ and that the signal $\rev_{k'}$ is observed only when the successor state is $k'$, i.e., $\another{P}_{\underline{k},\overline{k}}\left(i,\rev_{k'}\right)=0$ for every $\underline{k}\in \K$ and $\overline{k}\in \K\setminus\{k'\}$.
    If $k\in \F$ and $\sum_{s\in \S}p(k',s\given k,i)=0$, then the definition of $\another{p}$ gives that $k'=k_1$, that $\another{P}_{k,k_1}(i,\reset)=1/3>0$, and that the signal $\reset$ is observed only when the successor state is $k_1$.
    If $k\in \K\setminus \F$, then the definition of $\another{p}$ gives that $k'=k$, that $\another{P}_{k,k}(i,\rev_k)=1>0$, and that the signal $\rev_k$ is observed only when the successor state is $k$.
    Therefore, the revealing property holds for every action and every feasible transition of $\another{\POMDP}$.

\begin{proof}[Proof of the second item of \Cref{Result: Approximating the uniform value for revealing POMDPs is decidable}]
    Consider a POMDP $\POMDP$, an initial state $k_1\in \K$, a set of safe states $\F\subseteq \K$ with $k_1\in \F$, and the revealing POMDP $\another{\POMDP}$ constructed above.
    We prove that $\another{v}(\delta_{k_1})=1$ if $\POMDP$ is almost-sure winning for the safety objective, and that $\another{v}(\delta_{k_1})=0$ otherwise.

    \vspace{1em}
    \smallskip\noindent\textit{$\POMDP$ is almost-sure winning.}
        Assume that $\POMDP$ is almost-sure winning for the safety objective.
        Denote by $\W\subseteq \beliefsupport$ the set of belief-supports from which $\POMDP$ is almost-sure winning, i.e., $q\in \W$ if and only if there exists a strategy $\sigma\in \Sigma$ such that $\PP_{\sigma}^{b}\left(\forall m\in \NN^*,\; K_m\in \F\right)=1$ for every belief $b\in \Delta(\K)$ with $\supp(b)=q$.
        By~\cite{chatterjee2016decidable}, the almost-sure safety problem is decided on the belief-support MDP $\MDP_B$, i.e., for every $q\in \W$ there exists an action $\iota(q)\in \I$ such that
        \begin{align}
            q\subseteq \F
            \qquad\text{and}\qquad
            \Post\left(q,\iota(q)\right)\subseteq \W.
            \label{equation: winning belief-supports}
        \end{align}
        Since $\POMDP$ is almost-sure winning from $\delta_{k_1}$, we have that $\{k_1\}\in \W$.
        Moreover, the set $\W$ is downward closed.
        Indeed, fix $q\in \W$ and a nonempty belief-support $q'\subseteq q$.
        Every strategy that remains in $\F$ almost surely from the beliefs with support $q$ also remains in $\F$ almost surely from the beliefs with support $q'$, and thus $q'\in \W$.

        Consider the strategy $\another{\sigma}\in \another{\Sigma}$ that plays the action $\iota(Q_m)$ at every stage $m\in \NN^*$, where $Q_m\defas \supp(B_m)$ denotes the current belief-support.
        We prove by induction on $m\in \NN^*$ that $Q_m\in \W$ holds $\another{\PP}_{\another{\sigma}}^{\delta_{k_1}}$-almost surely.
        For the base case, we have that $Q_1=\{k_1\}\in \W$.
        For the induction case, assume that $Q_m=q\in \W$ and write $i\defas \iota(q)$.
        Since $q\subseteq \F$ by \eqref{equation: winning belief-supports}, the definition of $\another{p}$ gives that every belief-support occurring at stage $m+1$ with positive probability is of one of the following three forms.
        \begin{itemize}
            \item
                $\psi(q,i,s)$ with $s\in \S$, which is obtained through the original branch.
                Then, $\psi(q,i,s)\in \Post(q,i)\subseteq \W$ by \eqref{equation: winning belief-supports}.
            \item
                $\{k'\}$ with $k'\in \K$, which is obtained through the revealing branch.
                Then, there exists a signal $s\in \S$ such that $k'\in \psi(q,i,s)$, and thus $\{k'\}\subseteq \psi(q,i,s)\in \W$.
                Since $\W$ is downward closed, we get that $\{k'\}\in \W$.
            \item
                $\{k_1\}$, which is obtained through the reset branch, and $\{k_1\}\in \W$.
        \end{itemize}
        Therefore, we have that $Q_{m+1}\in \W$, which proves the induction.

        Since $K_m\in Q_m$ almost surely and $q\subseteq \F$ for every $q\in \W$, we deduce that $K_m\in \F$ for every stage $m\in \NN^*$, $\another{\PP}_{\another{\sigma}}^{\delta_{k_1}}$-almost surely.
        Hence, $\another{g}(K_m,I_m)=1$ for every $m\in \NN^*$ and thus $\another{\gamma}\left(\delta_{k_1},\another{\sigma}\right)=1$.
        Since $\another{g}(\cdot)\in [0,1]$, we conclude that $\another{v}(\delta_{k_1})=1$.

    \vspace{1em}
    \smallskip\noindent\textit{$\POMDP$ is not almost-sure winning.}
        Assume that $\POMDP$ is not almost-sure winning for the safety objective.
        Fix the horizon $n\in \NN^*$ and the constant $\eta>0$ given by \Cref{Result: finite-horizon safety gap}, and fix an arbitrary strategy $\another{\sigma}\in \another{\Sigma}$.
        Define the first unsafe stage by
        \[
            T_{\unsafe}\defas\inf\left\{m\in \NN^*\colon\; K_m\notin \F\right\}.
        \]
        We partition the horizon into blocks of length $n$, i.e., block $j\in \NN^*$ is the set of stages $[m_j\until m_{j+1}-1]$ with $m_j\defas 1+(j-1)n$.
        In particular, $m_{j+1}=m_j+n$ for every $j\in \NN^*$.

        We first establish the following one-block inequality.
        For every block index $j\in \NN^*$ and admissible history $h_{m_j}\in \H_{m_j}(\delta_{k_1})$ such that $\another{\PP}_{\another{\sigma}}^{\delta_{k_1}}\left(H_{m_j}=h_{m_j},T_{\unsafe}>m_j\right)>0$,
        \begin{align}
            \another{\PP}_{\another{\sigma}}^{\delta_{k_1}}\left(T_{\unsafe}\le m_{j+1}\givenm H_{m_j}=h_{m_j},T_{\unsafe}>m_j\right)\ge \eta\, 3^{-n}.
            \label{equation: one-block escape}
        \end{align}
        Since the state at stage $m_j$ belongs to $\F$, the definition of $\another{p}$ gives that the reset branch is selected at stage $m_j$ with conditional probability $1/3$, i.e.,
        \[
            \another{\PP}_{\another{\sigma}}^{\delta_{k_1}}\left(S_{m_j+1}=\reset\givenm H_{m_j}=h_{m_j},T_{\unsafe}>m_j\right)=\dfrac{1}{3}.
        \]
        The signal $\reset$ reveals the initial state $k_1$, and thus the belief at stage $m_j+1$ is $\delta_{k_1}$.
        Moreover, the definition of $\another{p}$ gives that, at every stage at which the current state belongs to $\F$, the original branch is selected with conditional probability $1/3$ and is the only branch whose signal belongs to $\S$.
        Therefore, by induction on $r\in [2\until n]$, we obtain that
        \begin{align*}
            &\another{\PP}_{\another{\sigma}}^{\delta_{k_1}}\left(\bigcap_{r=2}^{n}\left(\left\{S_{m_j+r}\in \S\right\}\cup\left\{T_{\unsafe}<m_j+r\right\}\right)\right.\\
            &\qquad\qquad\qquad\qquad\left.\givenm S_{m_j+1}=\reset,H_{m_j}=h_{m_j},T_{\unsafe}>m_j\right)\ge 3^{-(n-1)}.
        \end{align*}
        On the intersection of the two events above, the states $\left(K_{m_j+r}\right)_{r\in [1\until n]}$ follow the dynamics of $\POMDP$ from the initial state $k_1$, as long as they belong to $\F$.
        Hence, the continuation of $\another{\sigma}$ induces a strategy $\sigma\in \Sigma$ such that the conditional law of $\left(K_{m_j+r}\right)_{r\in [1\until n]}$ coincides with the law of $\left(K_r\right)_{r\in [1\until n]}$ under $\PP_{\sigma}^{\delta_{k_1}}$.
        Since $m_j+n=m_{j+1}$, \Cref{Result: finite-horizon safety gap} gives that
        \begin{align*}
            &\another{\PP}_{\another{\sigma}}^{\delta_{k_1}}\left(T_{\unsafe}\le m_{j+1}\givenm \bigcap_{r=2}^{n}\left(\left\{S_{m_j+r}\in \S\right\}\cup\left\{T_{\unsafe}<m_j+r\right\}\right),\right.\\
            &\qquad\qquad\qquad\qquad\left.S_{m_j+1}=\reset,H_{m_j}=h_{m_j},T_{\unsafe}>m_j\right)\ge \eta.
        \end{align*}
        Multiplying the three inequalities above yields inequality \eqref{equation: one-block escape}.

        We now prove by induction on $r\in \NN$ that
        \begin{align}
            \another{\PP}_{\another{\sigma}}^{\delta_{k_1}}\left(T_{\unsafe}>m_{r+1}\right)\le \left(1-\eta\, 3^{-n}\right)^{r}.
            \label{equation: block escape}
        \end{align}
        We start by observing that the base case holds.
        When $r=0$, we have that $\another{\PP}_{\another{\sigma}}^{\delta_{k_1}}\left(T_{\unsafe}>m_1\right)\le 1=\left(1-\eta\, 3^{-n}\right)^0$ and thus \eqref{equation: block escape} holds.
        Assume now that \eqref{equation: block escape} holds for some $r\in \NN$.
        Since $\left\{T_{\unsafe}>m_{r+2}\right\}\subseteq \left\{T_{\unsafe}>m_{r+1}\right\}$, if $\another{\PP}_{\another{\sigma}}^{\delta_{k_1}}\left(T_{\unsafe}>m_{r+1}\right)=0$, then \eqref{equation: block escape} holds for $r+1$.
        Otherwise, using again this inclusion and the tower rule, we have that
        \begin{align*}
            \another{\PP}_{\another{\sigma}}^{\delta_{k_1}}\left(T_{\unsafe}>m_{r+2}\right)
                &=\left(1-\another{\PP}_{\another{\sigma}}^{\delta_{k_1}}\left(T_{\unsafe}\le m_{r+2}\givenm T_{\unsafe}>m_{r+1}\right)\right)\\
                &\qquad\qquad \cdot\another{\PP}_{\another{\sigma}}^{\delta_{k_1}}\left(T_{\unsafe}>m_{r+1}\right)\\
                &\le \left(1-\eta\, 3^{-n}\right)\another{\PP}_{\another{\sigma}}^{\delta_{k_1}}\left(T_{\unsafe}>m_{r+1}\right)
                    &&\text{(by Eq.~\eqref{equation: one-block escape})}\\
                &\le \left(1-\eta\, 3^{-n}\right)^{r+1},
                    &&\text{(induction hyp.)}
        \end{align*}
        which proves inequality \eqref{equation: block escape}.
        Then, since $m_{r+1}=1+rn$ tends to infinity, we deduce that
        \begin{align*}
            \another{\PP}_{\another{\sigma}}^{\delta_{k_1}}\left(T_{\unsafe}=\infty\right)
                &=\lim_{r\to\infty}\another{\PP}_{\another{\sigma}}^{\delta_{k_1}}\left(T_{\unsafe}>m_{r+1}\right)
                    &&\text{(continuity from above)}\\
                &\le \lim_{r\to\infty}\left(1-\eta\, 3^{-n}\right)^{r}
                    &&\text{(by Eq.~\eqref{equation: block escape})}\\
                &=0.
                    &&\text{($\eta\, 3^{-n}>0$)}
        \end{align*}
        In particular, we have that $T_{\unsafe}<\infty$ $\another{\PP}_{\another{\sigma}}^{\delta_{k_1}}$-almost surely.

        Since every state in $\K\setminus \F$ is absorbing and $\another{g}(k,i)=0$ for every $k\in \K\setminus \F$ and $i\in \I$, no reward is obtained from stage $T_{\unsafe}$ on, i.e., for every horizon $N\in \NN^*$,
        \begin{align}
            \dfrac{1}{N}\sum_{m=1}^N\another{g}(K_m,I_m)\le \min\left\{1,\dfrac{T_{\unsafe}}{N}\right\}.
            \label{equation: average after unsafe}
        \end{align}
        Therefore, we obtain that
        \begin{align*}
            \another{\gamma}\left(\delta_{k_1},\another{\sigma}\right)
                &=\liminf_{N\to\infty}\another{\EE}_{\another{\sigma}}^{\delta_{k_1}}\left(\dfrac{1}{N}\sum_{m=1}^N\another{g}(K_m,I_m)\right)
                    &&\text{(def. of $\another{\gamma}$)}\\
                &\le \liminf_{N\to\infty}\another{\EE}_{\another{\sigma}}^{\delta_{k_1}}\left(\min\left\{1,\dfrac{T_{\unsafe}}{N}\right\}\right)
                    &&\text{(by Eq.~\eqref{equation: average after unsafe})}\\
                &=0.
                    &&\text{(dominated convergence and $T_{\unsafe}<\infty$ a.s.)}
        \end{align*}
        Since $\another{g}(\cdot)\in [0,1]$ and the strategy $\another{\sigma}\in \another{\Sigma}$ was taken arbitrary, we conclude that $\another{v}(\delta_{k_1})=0$.

    \vspace{1em}
    \smallskip\noindent\textit{Conclusion.}
        The two cases above give that $\another{v}(\delta_{k_1})=1$ if $\POMDP$ is almost-sure winning for the safety objective and $\another{v}(\delta_{k_1})=0$ otherwise.
        Fix $\eps<1/2$ and consider $\overline{v}$ such that $\left|\overline{v}-\another{v}(\delta_{k_1})\right|\le \eps$.
        Then, $\overline{v}>1/2$ in the first case and $\overline{v}<1/2$ in the second case, and thus $\overline{v}$ decides the almost-sure safety problem.
        Since the construction of $\another{\POMDP}$ is polynomial in the size of $\POMDP$ and the almost-sure safety problem is $\mathrm{EXPTIME}$-hard by \Cref{Result: almost-sure safety hardness}, this proves the second item of \Cref{Result: Approximating the uniform value for revealing POMDPs is decidable}.
\end{proof}

\section*{Conclusion}
    This paper considered the problem of approximating the long-run average value in revealing POMDPs.
    We proved that approximating the long-run average value in revealing POMDPs is $\mathrm{EXPTIME}$-complete.
    Potential directions for future research include considering POMDPs with infinite sets.


\section*{Acknowledgements}
    This research was partially supported by Austrian Science Fund (FWF) 10.55776/COE12 and by the ERC CoG 863818 (ForM-SMArt) grant.
    We thank Raimundo Saona for helpful discussions during the early stages of this work.

\bibliographystyle{alpha}
\bibliography{Biblio}

@article{shani2013survey,
title={A survey of point-based {POMDP} solvers},
author={Shani, Guy and Pineau, Joelle and Kaplow, Robert},
journal={Autonomous Agents and Multi-Agent Systems},
volume={27},
number={1},
pages={1--51},
year={2013},
publisher={Springer}
}

@article{rabin1963probabilistic,
title={Probabilistic automata},
author={Rabin, Michael O.},
journal={Information and Control},
volume={6},
number={3},
pages={230--245},
year={1963},
publisher={Elsevier}
}

@article{chatterjee2016decidable,
title={What is decidable about partially observable {M}arkov decision processes with {$\omega$}-regular objectives?},
author={Chatterjee, Krishnendu and Chmelik, Martin and Tracol, Mathieu},
journal={Journal of Computer and System Sciences},
volume={82},
number={5},
pages={878--911},
year={2016},
publisher={Elsevier}
}

@article{smallwood1973optimal,
title={The optimal control of partially observable {M}arkov processes over a finite horizon},
author={Smallwood, Richard D. and Sondik, Edward J.},
journal={Operations Research},
volume={21},
number={5},
pages={1071--1088},
year={1973},
publisher={INFORMS}
}

@article{venel2016strong,
title={Strong uniform value in gambling houses and partially observable {M}arkov decision processes},
author={Venel, Xavier and Ziliotto, Bruno},
journal={SIAM Journal on Control and Optimization},
volume={54},
number={4},
pages={1983--2008},
year={2016},
publisher={SIAM}
}

@article{papadimitriou1987complexity,
title={The complexity of {M}arkov decision processes},
author={Papadimitriou, Christos H. and Tsitsiklis, John N.},
journal={Mathematics of Operations Research},
volume={12},
number={3},
pages={441--450},
year={1987},
publisher={INFORMS}
}

@article{madani2003undecidability,
title={On the undecidability of probabilistic planning and related stochastic optimization problems},
author={Madani, Omid and Hanks, Steve and Condon, Anne},
journal={Artificial Intelligence},
volume={147},
number={1-2},
pages={5--34},
year={2003},
publisher={Elsevier}
}

@article{chatterjee2022finite,
title={Finite-memory strategies in {POMDP}s with long-run average objectives},
author={Chatterjee, Krishnendu and Saona, Raimundo and Ziliotto, Bruno},
journal={Mathematics of Operations Research},
volume={47},
number={1},
pages={100--119},
year={2022},
publisher={INFORMS}
}

@inproceedings{wang2019inventory,
title={Inventory control with partially observable states},
author={Wang, Erli and Kurniawati, Hanna and Kroese, Dirk},
booktitle={23rd International Congress on Modelling and Simulation (MODSIM2019)},
pages={200--206},
address={Canberra, Australia},
organization={Modelling and Simulation Society of Australia and New Zealand},
year={2019}
}

@article{kaelbling1996reinforcement,
title={Reinforcement learning: A survey},
author={Kaelbling, Leslie Pack and Littman, Michael L. and Moore, Andrew W.},
journal={Journal of Artificial Intelligence Research},
volume={4},
pages={237--285},
year={1996}
}

@inproceedings{gimbert2014deciding,
title={Deciding the Value 1 Problem for {$\sharp$}-acyclic Partially Observable {M}arkov Decision Processes},
author={Gimbert, Hugo and Oualhadj, Youssouf},
booktitle={International Conference on Current Trends in Theory and Practice of Informatics},
pages={281--292},
year={2014},
organization={Springer}
}

@article{kaelbling1998planning,
title={Planning and acting in partially observable stochastic domains},
author={Kaelbling, Leslie Pack and Littman, Michael L. and Cassandra, Anthony R.},
journal={Artificial Intelligence},
volume={101},
number={1-2},
pages={99--134},
year={1998},
publisher={Elsevier}
}

@article{arapostathis1993discrete,
title={Discrete-time controlled {M}arkov processes with average cost criterion: a survey},
author={Arapostathis, Aristotle and Borkar, Vivek S. and Fern{\'a}ndez-Gaucherand, Emmanuel and Ghosh, Mrinal K. and Marcus, Steven I.},
journal={SIAM Journal on Control and Optimization},
volume={31},
number={2},
pages={282--344},
year={1993},
publisher={SIAM}
}

@article{rosenberg2002blackwell,
title={{B}lackwell optimality in {M}arkov decision processes with partial observation},
author={Rosenberg, Dinah and Solan, Eilon and Vieille, Nicolas},
journal={The Annals of Statistics},
pages={1178--1193},
year={2002},
publisher={Institute of Mathematical Statistics}
}

@book{bertsekas1976DynamicProgrammingStochastic,
title = {Dynamic Programming and Stochastic Control},
author = {Bertsekas, Dimitri P.},
year = {1976},
series = {Mathematics in Science and Engineering},
number = {v. 125},
publisher = {Academic Press},
address = {New York},
isbn = {978-0-12-093250-4}
}

@article{renault2017long,
title={Long-term values in {M}arkov decision processes and repeated games, and a new distance for probability spaces},
author={Renault, J{\'e}r{\^o}me and Venel, Xavier},
journal={Mathematics of Operations Research},
volume={42},
number={2},
pages={349--376},
year={2017},
publisher={INFORMS}
}

@book{paz1971introduction,
title={Introduction to Probabilistic Automata},
author={Paz, Azaria},
year={1971},
publisher={Academic Press},
address={New York},
series={Computer Science and Applied Mathematics}
}

@inproceedings{chatterjee2010probabilistic,
title={Probabilistic automata on infinite words: Decidability and undecidability results},
author={Chatterjee, Krishnendu and Henzinger, Thomas A.},
booktitle={International Symposium on Automated Technology for Verification and Analysis},
pages={1--16},
year={2010},
organization={Springer}
}

@book{puterman1994,
title={{M}arkov Decision Processes: Discrete Stochastic Dynamic Programming},
author={Puterman, Martin L.},
year={1994},
publisher={John Wiley \& Sons},
address={New York}
}

@article{feinberg1996,
title={On measurability and representation of strategic measures in {M}arkov decision processes},
author={Feinberg, Eugene A.},
journal={Lecture Notes-Monograph Series},
pages={29--43},
year={1996},
publisher={Institute of Mathematical Statistics}
}

@article{chatterjee2026approximating,
title={Approximating the Uniform Value in Hidden Stochastic Games with {D}oeblin Conditions},
author={Chatterjee, Krishnendu and Lurie, David and Saona, Raimundo and Ziliotto, Bruno},
journal={arXiv preprint arXiv:2602.06480},
year={2026}
}

@phdthesis{de1997formal,
title = {{Formal Verification of Probabilistic Systems}},
author = {de Alfaro, Luca},
year = {1997},
type = {{Ph.D.} diss.},
school = {Stanford University},
address = {Stanford, CA, USA}
}

@inproceedings{belly2025revelations,
title={Revelations: a decidable class of {POMDP}s with {$\omega$}-regular objectives},
author={Belly, Marius and Fijalkow, Nathana{\"e}l and Gimbert, Hugo and Horn, Florian and P{\'e}rez, Guillermo A. and Vandenhove, Pierre},
booktitle={Proceedings of the AAAI Conference on Artificial Intelligence},
volume={39},
pages={26454--26462},
year={2025}
}

@inproceedings{chatterjee2014partial,
title={Partial-observation stochastic reachability and parity games},
author={Chatterjee, Krishnendu},
booktitle={International Symposium on Mathematical Foundations of Computer Science},
pages={1--4},
year={2014},
organization={Springer}
}

@article{chatterjee2013survey,
title = {A survey of partial-observation stochastic parity games},
author = {Chatterjee, Krishnendu and Doyen, Laurent and Henzinger, Thomas A.},
year = {2013},
journal = {Formal Methods in System Design},
volume = {43},
pages = {268--284},
publisher = {Springer}
}

@article{chatterjee2012survey,
title={A Survey of Stochastic {$\omega$}-Regular Games},
author={Chatterjee, Krishnendu and Henzinger, Thomas A.},
journal={Journal of Computer and System Sciences},
volume={78},
number={2},
pages={394--413},
year={2012},
publisher={Elsevier}
}

@inproceedings{avrachenkov2025constrained,
title={Constrained Average-Reward Intermittently Observable {MDP}s},
author={Avrachenkov, Konstantin and Dhiman, Madhu and Kavitha, Veeraruna},
booktitle={2025 IEEE 64th Conference on Decision and Control (CDC)},
pages={338--344},
year={2025},
organization={IEEE}
}

@misc{chenIntermittentlyObservableMarkov2023,
title = {{Intermittently Observable Markov Decision Processes}},
author = {Chen, Gongpu and Liew, Soung-Chang},
year = {2023},
howpublished = {arXiv:2302.11761},
eprint = {2302.11761},
archiveprefix = {arXiv}
}

@inproceedings{asadi2026revealing,
title={Revealing {POMDP}s: Qualitative and quantitative analysis for parity objectives},
author={Asadi, Ali and Chatterjee, Krishnendu and Lurie, David and Saona, Raimundo},
booktitle={Proceedings of the AAAI Conference on Artificial Intelligence},
volume={40},
pages={36146--36154},
year={2026}
}

@article{VZ21,
title={History-dependent Evaluations in Partially Observable {M}arkov Decision Process},
author={Venel, Xavier and Ziliotto, Bruno},
journal={SIAM Journal on Control and Optimization},
volume={59},
number={2},
pages={1730--1755},
year={2021},
publisher={SIAM}
}

@article{NS10,
title={Repeated games with public uncertain duration process},
author={Neyman, Abraham and Sorin, Sylvain},
journal={International Journal of Game Theory},
volume={39},
pages={29--52},
year={2010},
publisher={Springer}
}

@article{chatterjee2025ergodic,
title = {Uniform value and decidability in ergodic blind stochastic games},
author = {Chatterjee, Krishnendu and Lurie, David and Saona, Raimundo and Ziliotto, Bruno},
journal = {Mathematics of Operations Research},
year = {2025},
publisher = {INFORMS}
}

@article{ni2009sensor,
  author  = {Ni, Kevin and Ramanathan, Nithya and Chehade, Mohamed Nabil Hajj
             and Balzano, Laura and Nair, Sheela and Zahedi, Sadaf
             and Kohler, Eddie and Pottie, Gregory J. and Hansen, Mark H.
             and Srivastava, Mani B.},
  title   = {Sensor Network Data Fault Types},
  journal = {ACM Transactions on Sensor Networks},
  volume  = {5},
  number  = {3},
  pages   = {25:1--25:29},
  year    = {2009},
  doi     = {10.1145/1525856.1525863}
}

\appendix

\section{Proof of Lemma~\ref{result: CSZ theorem}}\label{Appendix: Proof of Lemma result: CSZ theorem}

    For the purpose of applying~\cite[Lemma~5.3, p.~109]{chatterjee2022finite}, define the expected liminf average objective by
    \[
        \overline{\gamma}(b,\sigma)\defas\EE_{\sigma}^{b}\left(\liminf_{n\to\infty}\dfrac{1}{n}\sum_{m=1}^nG_m\right),
    \]
    and the expected liminf average value by $\overline{v}(b)\defas\sup_{\sigma\in\Sigma}\overline{\gamma}(b,\sigma)$.
    By~\cite[Lemma~5.3, p.~109]{chatterjee2022finite}, there exist $m_\eps\in\NN^*$, a strategy $\sigma_\eps\in\Sigma$, and a random belief $B^*\in\Delta(\K)$ satisfying the first item of the statement. Moreover, for every realization $b^*$ of $B^*$, there exists a strategy $\sigma_{b^*}\in\Sigma$ such that, for every $k\in\supp(b^*)$,
    \begin{align}
        \dfrac{1}{n}\sum_{m=1}^nG_m\xrightarrow[n\to\infty]{}\overline{\gamma}(\delta_k,\sigma_{b^*})\qquad\PP_{\sigma_{b^*}}^{\delta_k}\text{-almost surely},
        \label{equation: previous limit CSZ}
    \end{align}
    and $\overline{\gamma}(b^*,\sigma_{b^*})=\overline{v}(b^*)$ and $\EE_{\sigma_\eps}^{b_1}\left(\overline{v}(B^*)\right)\geq\overline{v}(b_1)-\eps$.
    
    Fix a realization $b^*$ of $B^*$ and let $\sigma_{b^*}$ be the corresponding strategy given by the lemma.
    Since the rewards are bounded in $[0,1]$, the dominated convergence theorem yields, for every $k\in \supp(b^*)$,
    \begin{align}
        \overline{\gamma}(\delta_k,\sigma_{b^*})
            &=\EE_{\sigma_{b^*}}^{\delta_k}\left(\liminf_{n\to\infty}\dfrac{1}{n}\sum_{m=1}^nG_m\right)
                &&\text{(def. of $\overline{\gamma}$)}\nonumber\\
            &=\EE_{\sigma_{b^*}}^{\delta_k}\left(\lim_{n\to\infty}\dfrac{1}{n}\sum_{m=1}^nG_m\right)
                &&\text{(by Eq. \eqref{equation: previous limit CSZ})}\nonumber\\
            &=\lim_{n\to\infty}\EE_{\sigma_{b^*}}^{\delta_k}\left(\dfrac{1}{n}\sum_{m=1}^nG_m\right)
                &&\text{(dominated convergence theorem)}\nonumber\\
            &=\liminf_{n\to\infty}\EE_{\sigma_{b^*}}^{\delta_k}\left(\dfrac{1}{n}\sum_{m=1}^nG_m\right)
                &&\text{(existence of the limit)}\nonumber\\
            &=\gamma(\delta_k,\sigma_{b^*})
                &&\text{(def. of $\gamma$).}
                \label{equation: equality CSZ and ours}
    \end{align}
    By \eqref{equation: previous limit CSZ} and \eqref{equation: equality CSZ and ours}, 
    \[
        \dfrac{1}{n}\sum_{m=1}^nG_m\xrightarrow[n\to\infty]{}\gamma(\delta_k,\sigma_{b^*})\qquad\PP_{\sigma_{b^*}}^{\delta_k}\text{-almost surely}.
    \]
    Since $\supp(b^*)$ is finite,
    \begin{align*}
        \overline{\gamma}(b^*,\sigma_{b^*})
            &=\sum_{k\in \K}b^*(k)\overline{\gamma}(\delta_k,\sigma_{b^*})
                &&\text{(linearity)}\\
            &=\sum_{k\in \K}b^*(k)\gamma(\delta_k,\sigma_{b^*})
                &&\text{(by Eq.~\eqref{equation: equality CSZ and ours})}\\
            &=\sum_{k\in \K}b^*(k)\lim_{n\to\infty}\EE_{\sigma_{b^*}}^{\delta_k}\left(\dfrac{1}{n}\sum_{m=1}^nG_m\right)\\
            &=\lim_{n\to\infty}\sum_{k\in \K}b^*(k)\EE_{\sigma_{b^*}}^{\delta_k}\left(\dfrac{1}{n}\sum_{m=1}^nG_m\right)
                &&\text{($\supp(b^*)$ is finite)}\\
            &=\lim_{n\to\infty}\EE_{\sigma_{b^*}}^{b^*}\left(\dfrac{1}{n}\sum_{m=1}^nG_m\right)
                &&\text{(linearity)}\\
            &=\liminf_{n\to\infty}\EE_{\sigma_{b^*}}^{b^*}\left(\dfrac{1}{n}\sum_{m=1}^nG_m\right)
                &&\text{(existence of the limit)}\\
            &=\gamma(b^*,\sigma_{b^*})
                &&\text{(def. of $\gamma$).}
    \end{align*}
    By Remark~\ref{remark: values coincides}, we have $\overline{v}(b)=v(b)$ for every belief $b\in\Delta(\K)$. 
    Hence,
    \[
        \gamma(b^*,\sigma_{b^*})=v(b^*)
    \]
    and $\EE_{\sigma_\eps}^{b_1}\left(v(B^*)\right)=\EE_{\sigma_\eps}^{b_1}\left(\overline{v}(B^*)\right)\geq\overline{v}(b_1)-\eps=v(b_1)-\eps$, which concludes the proof.

\end{document}